\documentclass[11pt,a4paper,reqno]{amsart}
\usepackage[english]{babel}
\usepackage[T1]{fontenc}
\usepackage{palatino}
\usepackage{amsmath}
\usepackage{amssymb}
\usepackage{amsthm}
\usepackage{amsfonts}
\usepackage{graphicx}
\usepackage{color}
\usepackage{mathtools}

\usepackage[colorlinks = true, citecolor = black]{hyperref}
\pagestyle{headings}
\title{Isometric Embeddings into Heisenberg Groups}
\author[Z.\ M.\ Balogh]{Zolt\'{a}n M.\ Balogh}
\address{Department of Mathematics and Statistics\\ University of Bern \\ Sidlerstrasse 5, 3012 Bern, Switzerland}
\email{zoltan.balogh@math.unibe.ch}
\author[K. F\"assler]{Katrin F\"assler}
\address{Department of Mathematics\\ University of Fribourg \\ Chemin du Mus\'{e}e 23,
CH-1700 Fribourg, Switzerland}
\email{katrin.faessler@unifr.ch}
\author[H.\ Sobrino]{Hernando Sobrino}
\address{Department of Mathematics and Statistics\\ University of Bern \\ Sidlerstrasse 5, 3012 Bern, Switzerland}
\email{hernando.sobrino@math.unibe.ch}
\date{\today}
\subjclass[2010]{30L05, 22E25, 54E40, 53C17}
\thanks{Z.B.\ was supported by the Swiss National Science Foundation through the project 165507 `Geometric Analysis of Sub-Riemannian Spaces'. K.F.\ was supported by the Swiss National Science Foundation through the grant 161299 `Intrinsic rectifiability and mapping theory on the Heisenberg group'.}
\keywords{Heisenberg group, isometric embeddings, homogeneous norms}

\newcommand{\E}{\mathbb{E}}

\def\Barint_#1{\mathchoice
          {\mathop{\vrule width 6pt height 3 pt depth -2.5pt
                  \kern -8pt \intop}\nolimits_{#1}}%
          {\mathop{\vrule width 5pt height 3 pt depth -2.6pt
                  \kern -6pt \intop}\nolimits_{#1}}%
          {\mathop{\vrule width 5pt height 3 pt depth -2.6pt
                  \kern -6pt \intop}\nolimits_{#1}}%
          {\mathop{\vrule width 5pt height 3 pt depth -2.6pt
                  \kern -6pt \intop}\nolimits_{#1}}}

\numberwithin{equation}{section}

\theoremstyle{plain}
\newtheorem{thm}[equation]{Theorem}

\newtheorem{lemma}[equation]{Lemma}

\newtheorem{ex}[equation]{Example}
\newtheorem{cor}[equation]{Corollary}
\newtheorem{proposition}[equation]{Proposition}

\theoremstyle{definition}

\newtheorem{definition}[equation]{Definition}

\theoremstyle{remark}
\newtheorem{remark}[equation]{Remark}

\addtolength{\hoffset}{-1.15cm}
\addtolength{\textwidth}{2.3cm}
\addtolength{\voffset}{0.45cm}
\addtolength{\textheight}{-0.9cm}

\begin{document}

\begin{abstract}
We study isometric embeddings of a Euclidean space or a Heisenberg group into a higher dimensional Heisenberg group, where both the source and target space are equipped with an arbitrary left-invariant homogeneous distance that is not necessarily sub-Riemannian. We show that if all infinite geodesics in the target are straight lines, then such an embedding must be a homogeneous homomorphism. We discuss a necessary and certain sufficient conditions for the target space to have this `geodesic linearity property', and we provide various examples.
\end{abstract}

\maketitle

\section{Introduction}

Isometries play a crucial role in metric geometry. It is a challenging task to decide whether two metric spaces (or subsets thereof) are isometric to each other. The task becomes more manageable if one can exploit additional structure on the space  to deduce a priori information on the form of isometries. For instance, according to a classical result by S.\ Mazur and S.\ Ulam, every isometry between normed vector spaces over $\mathbb{R}$ is affine, and this rigidity can be used to study which $\ell^n_p$ spaces are isometric.

In this paper we consider another class of metric spaces: Heisenberg groups $\mathbb{H}^n$ endowed with a homogeneous distance. By such a distance, we mean a left-invariant metric induced by a gauge function which is homogeneous with respect to
a one-parameter family of `Heisenberg dilations' adapted to the stratification of the underlying Lie algebra. An example is the Heisenberg group with its standard sub-Riemannian distance. It would go beyond the scope of this introduction to list the many motives for studying this particular space, but the interested reader can find more information for instance in the monograph \cite{MR2312336}. Surjective isometries between Heisenberg groups and more general sub-Riemannian manifolds have received considerable attention in recent years \cite{MR1078163,MR2028665,AM,MR2738530,CLD}.
In this paper we consider distances that are not necessarily sub-Riemannian and isometric embeddings that are not necessarily surjective. We prove the following result:
\begin{thm}\label{t;main1}
Let $\mathbb{H}^m$ and $\mathbb{H}^n$, $n\geq m$, be endowed with left-invariant homogeneous distances $d$ and $d'$, respectively. If every infinite geodesic in $(\mathbb{H}^n,d')$ is a line, then every isometric embedding $f:(\mathbb{H}^m,d) \to (\mathbb{H}^n,d')$ is the composition of a left translation and a homogeneous homomorphism.
\end{thm}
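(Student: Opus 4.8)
The plan is to argue in the spirit of the Mazur--Ulam theorem, replacing ``lines'' by horizontal one--parameter subgroups and using the hypothesis to force the image geodesics to be straight. First I would normalize: since left translations are isometries of $(\mathbb{H}^n,d')$, the map $L_{f(0)^{-1}}\circ f$ is again an isometric embedding sending the identity to the identity. Thus I may assume $f(0)=0$ and it suffices to prove that $f$ is a homogeneous homomorphism, i.e. that $f\circ\delta_r=\delta_r\circ f$ for all $r>0$ and $f(p\cdot q)=f(p)\cdot f(q)$ for all $p,q$.

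The next step is to exploit geodesics. For a first-layer vector $v\in V_1$ the curve $s\mapsto\delta_s v$ is a geodesic: by left-invariance and homogeneity $d(\delta_s v,\delta_t v)=d(0,(\delta_s v)^{-1}\delta_t v)=|s-t|\,d(0,v)$, where one uses $d(0,v)=d(0,-v)$ and that the one-parameter subgroup generated by $v$ is abelian. More generally every coset $s\mapsto p\cdot\delta_s v$ is a geodesic line. An isometric embedding carries infinite geodesics to infinite geodesics, so by hypothesis $f$ maps every horizontal line to a line in $(\mathbb{H}^n,d')$; since a straight line through the origin can be a geodesic only if its direction is horizontal (a nonzero vertical component forces sub-linear, hence non-geodesic, growth of $d'(0,\cdot)$ by the weight-$2$ homogeneity), $f$ sends horizontal lines through $0$ to horizontal lines through $0$. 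Matching the two-sided distances along such a line shows that the reparametrization is affine and, after absorbing the scale into the direction, gives $f(\delta_s v)=\delta_s f(v)$ for all $s\in\R$, with $f(v)\in V_1$. Hence $f(V_1)\subseteq V_1$ and $A:=f|_{V_1}\colon V_1\to V_1(\mathbb{H}^n)$ is homogeneous of degree one.

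To promote this to full homogeneity I would consider the rescaled maps $f_r:=\delta_{1/r}\circ f\circ\delta_r$. Each $f_r$ is an isometric embedding fixing $0$, and the previous step gives $f_r|_{V_1}=f|_{V_1}$. For any $q$ and any $w\in V_1$ one has $d'(f_r(q),f(w))=d'(f_r(q),f_r(w))=d(q,w)=d'(f(q),f(w))$, so $f_r(q)$ and $f(q)$ are equidistant from all of $f(V_1)$; establishing that $V_1$ is a \emph{metric-determining set} (no two distinct points of $\mathbb{H}^n$ have identical distances to $f(V_1)$) would then force $f_r=f$, i.e. $f\circ\delta_r=\delta_r\circ f$. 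The same uniqueness principle reduces the homomorphism property to a statement on the first layer: the auxiliary map $g(q):=f(p)^{-1}\cdot f(p\cdot q)$ is again an origin-fixing isometric embedding, so $g=f$ as soon as $g|_{V_1}=f|_{V_1}$, that is, as soon as $f(p\cdot v)=f(p)\cdot f(v)$ for every $v\in V_1$ and every $p$. Since $V_1$ generates $\mathbb{H}^m$, writing $q=v_1\cdots v_k$ and iterating this identity yields $f(p\cdot q)=f(p)\cdot f(q)$ in general. Concretely, for fixed $v$ the line $s\mapsto p\cdot\delta_s v$ maps to a horizontal line through $f(p)$ of the form $s\mapsto f(p)\cdot\delta_s w_{p,v}$, and the whole claim comes down to proving that the direction $w_{p,v}$ is independent of $p$ and equals $A(v)$.

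The independence of $w_{p,v}$ from the base point is equivalent to $A$ being a \emph{linear} map that is \emph{compatible with the symplectic form} $\omega$, and this is the step I expect to be the main obstacle. On an isotropic (i.e. $\omega$-null) subspace $W\subseteq V_1$ the group operation is abelian and $d$ restricts to a genuine norm, so $f|_W$ is an isometry onto a normed image and Mazur--Ulam yields additivity $A(u+v)=A(u)+A(v)$ there, together with the fact that $A$ preserves isotropy. The difficulty is that for non-isotropic pairs, and already for $\mathbb{H}^1$ where every isotropic subspace is one-dimensional, additivity is invisible to the horizontal metric alone: the restriction of $d$ to $V_1$ is not translation-invariant, and the missing information sits in the vertical direction. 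I would extract it from the behaviour of $f$ on the center, characterizing central elements as commutators $\delta_1 u\cdot\delta_1 v\cdot\delta_1(-u)\cdot\delta_1(-v)=(0,\omega(u,v))$ (enclosed signed area) and using midpoint preservation along the geodesics joining the vertices of these broken paths to pin down $\omega'(A(u),A(v))=\lambda\,\omega(u,v)$ and hence the linearity of $A$ and the action of $f$ on the center. Once $A$ is linear and symplectically compatible, $w_{p,v}=A(v)$ follows, the central values of $f$ are determined by the commutator relations, and assembling these facts shows that $f$ is the homogeneous homomorphism induced by $A$, completing the proof.
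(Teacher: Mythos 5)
Your opening steps coincide with the paper's: normalize so that $f(0)=0$, observe that horizontal lines are infinite geodesics, and use the hypothesis to conclude that $f$ maps each horizontal line to a horizontal line, with $f(\delta_s v)=\delta_s f(v)$ along lines through the origin. After that, however, your argument has two genuine gaps, one of which is a step that is actually false as stated. The uniqueness principle you invoke twice --- that two points of $\mathbb{H}^n$ with identical distances to $f(V_1)$ must coincide, used both to force $f_r=f$ and to force $g=f$ --- does not hold: $f(V_1)$ lies in the horizontal hyperplane $\mathbb{R}^{2n}\times\{0\}$, and even the \emph{entire} horizontal hyperplane fails to separate points. For the Kor\'anyi norm, for instance, $d\bigl((0,t),(w,0)\bigr)=\bigl(\|w\|_2^4+t^2\bigr)^{1/4}=d\bigl((0,-t),(w,0)\bigr)$ for every $w\in\mathbb{R}^{2n}$, so $(0,t)$ and $(0,-t)$ are equidistant from all of $\mathbb{R}^{2n}\times\{0\}$. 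Hence equidistance from $f(V_1)$ cannot pin down $f_r(q)$ or $g(q)$, and both reductions collapse. The second gap is the one you yourself flag as the main obstacle: proving that the direction $w_{p,v}$ of the image of the line $s\mapsto p\ast(sv,0)$ is independent of $p$, equivalently that $A$ is linear and symplectically compatible. Your sketch via ``midpoint preservation along broken paths'' is not an argument; note also that the source metric $d$ is an arbitrary homogeneous distance and need not have any midpoint property.

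The paper closes exactly this obstacle with an elementary growth-rate argument that you are missing. Writing $f(p_0\ast(sz,0))=f(p_0)\ast(sT(z,p_0),0)$, one compares $d'\bigl(f(p_0\ast(sz,0)),f((sz,0))\bigr)=d\bigl(p_0\ast(sz,0),(sz,0)\bigr)$: by equivalence of homogeneous distances the left side is bounded below by $\|T(z,p_0)-T(z,0)\|_2\,|s|$ minus a constant, while the right side is $O\bigl(\sqrt{|s|}\bigr)$ because the horizontal parts of the two points agree and only the vertical coordinate (of weight two) grows. Letting $|s|\to\infty$ forces $T(z,p_0)=T(z,0)$. With base-point independence in hand, the paper then shows the vertical axis maps to the vertical axis by writing $(0,t)$ as an explicit product of four horizontal segments $\bigl(-\tfrac{t}{4}e_1,0\bigr)\ast(e_{m+1},0)\ast\bigl(\tfrac{t}{4}e_1,0\bigr)\ast(-e_{m+1},0)$ and applying the already-established formula iteratively; additivity of $T$ and the relation $a\,\omega_n(z_0,z)=\omega_n(T(z_0),T(z))$ then drop out of the group law, and linearity follows from the midpoint lemma for maps between normed spaces. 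If you want to salvage your outline, replace the metric-determining-set step and the symplectic-compatibility sketch by this growth comparison; it is the one idea your proposal is missing.
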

For the precise -- slightly more general -- statement, see Theorem \ref{t:main_Eucl_Heis}. We emphasize that this result is about isometric embeddings, which to the best of our knowledge have not been studied in this setting before. For surjective isometries, the conclusion of Theorem \label{t;main1} is known even without the assumption on the geodesics in the target space. Heisenberg groups with homogeneous distances are examples of metric Lie groups in the sense of V.\ Kivioja and E.\ Le Donne, and in \cite{KLD}, the authors proved a group version of the Mazur-Ulam theorem, stating that every isometry between nilpotent connected metric Lie groups is `affine', that is, the composition of a left translation and a group isomorphism.  An analogous result is known for isometries between open sets in nilpotent metric Lie groups with additional structure: Le Donne and A.\ Ottazzi showed in \cite{LDO} that every isometry between open subsets of sub-Riemannian, or more generally sub-Finsler, Carnot groups is the restriction of an affine map. It is the purpose of the present paper to establish similar conclusions for isometric embeddings into Heisenberg groups. Examples in Section \ref{s:NonLinearEmbedd} show that such embeddings need not be affine in general. This is only natural -- even in the setting of normed vector spaces a non-surjective isometry is not necessarily affine, however it is, if the norm in the target space is strictly convex.
In normed spaces, the linearity of geodesics is one of many equivalent ways to characterize strict convexity, and the corresponding property turns out to be sufficient for Heisenberg groups as well.

The proof of the main result proceeds by showing that an isometric embedding must map foliations given by certain vector fields in the source to analogous foliations in the target. Ideas in this spirit have been used to study global isometries before, for instance in \cite{MR1078163}.  Our proof of Theorem \ref{t;main1} is self-contained and elementary. Unlike proofs in  \cite{KLD} and
\cite{LDO}, it does not proceed via first establishing smoothness of isometries. Moreover, it applies in particular also to the situation where the homogeneous distance in the source space is not a length distance.

%

We introduce properties of homogeneous left-invariant metrics which imply that the associated geodesics are lines. This provides methods to establish that the assumptions of  Theorem \ref{t;main1} are satisfied, but we also hope it to be of independent interest. As an illustration, we consider a one-parameter family of norms $N_{p,a}$ on $\mathbb{H}^n$ which are related to the $\ell_p$-norms in Euclidean space and we prove that $N_{p,a}$ has the geodesic linearity property iff $p\in (1,+\infty)$, see Section \ref{s:exNorms}. The distance associated to $N_{p,a}$ for $p=\infty$ admits infinite geodesics which are not lines, and in fact there exist in this setting isometric embeddings which are non-linear. A specific example of such a non-linear embedding is the mapping
\begin{displaymath}
f:(\mathbb{H}^1,d_{N_{a,\infty}})\to (\mathbb{H}^2,d_{N_{a,\infty}}),\quad (x,y,t)\mapsto (x,\sin(x),y,0,t),
\end{displaymath}
see Proposition \ref{p:nonlinearEmbed}.

\noindent \textbf{Structure.} Section \ref{s:prelim} contains preliminaries.  Section \ref{s:sc} is devoted to notions of strict convexity; after reviewing the definition in normed spaces, we introduce various notions of strict convexity for left-invariant homogeneous distances on Heisenberg groups. As a first result, we find sufficient conditions and one necessary condition to ensure that a homogeneous distance on $\mathbb{H}^n$ has the `geodesic linearity property', that is, all infinite geodesics are lines. In Section \ref{s:main}, we prove Theorem \ref{t;main1}. We continue with examples of metrics and isometric embeddings in Section \ref{ex:Examples}. The paper is concluded with final comments in Section \ref{s:final}.

\medskip

\noindent \textbf{Acknowledgements.} We thank Enrico Le Donne for helpful discussions on the subject of this paper, in particular for giving us the initial impetus to look at the sub-Finsler distances induced by homogeneous norms. We also thank Rita Pini and Andrea Calogero for motivating conversations.

\section{Preliminaries}\label{s:prelim}

First we discuss in Section \ref{ss:TheHeisenberggroup} the Heisenberg group and the homogeneous distances which we will consider thereon.
In Sections \ref{s:ProjectedNorms} and  \ref{s:LengthOfCurves} we collect facts about homogeneous norms, sub-Finsler distances, and lengths of curves. This goes back to the work of Kor\'{a}nyi for the sub-Rie{\-}mannian distance. The considered properties are folklore knowledge even for more general homogeneous norms, and discussed in various places in the literature, for instance in \cite{zbMATH06363213} or \cite{LDNG}, and we do not claim novelty here. However, since we could not always find references which stated the results in the desired generality and since we sometimes follow a different approach, we decided to include the relevant results and proofs.  This also serves the purpose of introducing the concepts used later in Section \ref{s:StrictlyConvexGroups}, where we propose new definitions of strict convexity in Heisenberg groups. Readers familiar with the present material may wish to go directly to Section \ref{s:sc}.

\subsection{The Heisenberg group}\label{ss:TheHeisenberggroup}
The \emph{$n$-th Heisenberg group}  $\mathbb{H}^{n}$ is the set $\mathbb{R}^{2n}\times\mathbb{R}$ equipped with the
multiplication
\begin{displaymath}
(z,t)\ast(z',t'):=(z+z',t+t'+2\langle z,J_n z'\rangle), \text{ where } J_n = \begin{pmatrix}0&-E_n\\E_n&0\end{pmatrix}\in\mathbb{R}^{2n\times2n},
\end{displaymath}
and $E_n$ denotes the $(n\times n)$ unit matrix. Sometimes it is convenient to write in coordinates
\begin{displaymath}
z=(x_1,\ldots,x_n,y_1,\ldots,y_n).
\end{displaymath}
It can be easily verified
that $(\mathbb{H}^{n},\ast)$ satisfies all properties of a group
with neutral element $e:=(0,0)$ and inverse $(z,t)^{-1}:=(-z,-t)$.  Denoting the nonlinear term $\langle z,J_n z'\rangle$ by $\omega_{n} (z,z')$, we remark that this expression defines a skew-symmetric bilinear form on $\mathbb{R}^{2n}$, and that two elements $(z,t)$ and $(z',t')$ in $\mathbb{H}^{n}$ commute if and only if the term $\omega_n(z,z')$ is zero. Since this does not hold for all elements in $\mathbb{H}^{n}$ (for example $\omega_{n}(e_1,e_{n+1})=-1$ for the first and $(n+1)$-th standard unit vector in $\mathbb{R}^{2n}$), it turns out that the Heisenberg group is  non-abelian.

We can also identify the Heiseberg group $\mathbb{H}^{n}$ with $\mathbb{C}^{n}\times\mathbb{R}$, associating the element $z=(x_1,\ldots,x_n,y_1,\ldots,y_n)\in\mathbb{R}^{2n}$ with $\hat{z}(z):=(x_1+\mathrm{i}y_1,\ldots,x_n+\mathrm{i}y_n)\in\mathbb{C}^{n}$. Using this notation, the expression $\omega_n(z,z')$ takes the form $\mathrm{Im}(\langle \hat{z}(z),\hat{z}(z')\rangle)$, where $\langle\cdot,\cdot \rangle$ denotes the standard inner product on $\mathbb{C}^{n}$.
\begin{definition}
Let $\lambda>0$. The map $\delta_{\lambda}:\mathbb{H}^{n}\to\mathbb{H}^{n},\ (z,t)\mapsto(\lambda z,\lambda^{2}t)$ is called \emph{$\lambda$-dilation}.
\end{definition}
It can be easily verified that any $\lambda$-dilation defines a group isomorphism with inverse $\delta_{\lambda^{-1}}$. It plays an analogous role as the usual scalar multiplication in $\mathbb{R}^n$. To unify the notation in Euclidean spaces and Heisenberg groups, we will sometimes write $\delta_{\lambda}(x):= \lambda x$ for such scalar multiplication $\delta_{\lambda}: \mathbb{R}^n \to \mathbb{R}^n$.

\begin{definition}
Consider groups $\mathbb{G}_1, \mathbb{G}_2 \in \{(\mathbb{R}^n,+),(\mathbb{H}^n,\ast):\, n\in \mathbb{N}\}$ and associated one-parameter families of dilations $\delta^1_{\lambda}$, $\delta^2_{\lambda}$. A \emph{homogeneous homomorphism} $A: \mathbb{G}_1 \to \mathbb{G}_2$ is a group homomorphism that commutes with dilations, that is
\begin{align}
A(\delta^1_{\lambda}(p))=\delta^2_{\lambda}(A(p)),\ \forall \lambda >0, \forall p\in{G}_1.\nonumber
\end{align}
\end{definition}


\begin{lemma}\label{l:homohHomo}
A map $A:(\mathbb{R}^{m},+)\to(\mathbb{H}^{n},\ast)$ is a homogeneous homomorphism if
and only if there exists a matrix $T\in\mathbb{R}^{2n\times m}$ with $T^{t}J_nT=0$, such that
$A(z)=(Tz,0)$, for all $z\in\mathbb{R}^{m}$. A map $A:(\mathbb{H}^{m},\ast)\to(\mathbb{H}^{n},\ast)$ is a homogeneous homomorphism if
and only if there exist $a\in\mathbb{R}$ and a matrix $T\in\mathbb{R}^{2n\times2m}$ with $aJ_m=T^{t}J_nT$, such that
$A(z,t)=(Tz,at)$, for all $(z,t)\in\mathbb{H}^{m}$.
\end{lemma}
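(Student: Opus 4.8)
The plan is to establish both biconditionals by direct computation, exploiting the interplay between additivity (from the homomorphism property) and the scaling relations (from commutation with dilations); throughout I use that $\omega_n(Tz,Tz') = \langle Tz, J_nTz'\rangle = z^t T^t J_n T z'$.

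\medskip

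\noindent\textbf{The ``if'' direction.} For $A:\mathbb{R}^m\to\mathbb{H}^n$ with $A(z)=(Tz,0)$ and $T^tJ_nT=0$, I would simply compute $A(z)\ast A(z') = (Tz+Tz',\,2\,z^tT^tJ_nTz') = (T(z+z'),0) = A(z+z')$, and $A(\lambda z)=(\lambda Tz,0)=\delta_\lambda A(z)$. For $A:\mathbb{H}^m\to\mathbb{H}^n$ with $A(z,t)=(Tz,at)$ and $aJ_m=T^tJ_nT$, the first coordinate of $A(z,t)\ast A(z',t')$ is $T(z+z')$ and its central coordinate is $at+at'+2\omega_n(Tz,Tz') = a(t+t'+2\omega_m(z,z'))$, which is exactly the central coordinate of $A\big((z,t)\ast(z',t')\big)$ precisely because $T^tJ_nT=aJ_m$; commutation with dilations is immediate from $\delta_\lambda(Tz,at)=(\lambda Tz,\lambda^2 at)$.

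\medskip

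\noindent\textbf{The ``only if'' direction, Euclidean source.} Write $A(z)=(\phi(z),\tau(z))$ with $\phi:\mathbb{R}^m\to\mathbb{R}^{2n}$ and $\tau:\mathbb{R}^m\to\mathbb{R}$. Commutation with dilations yields $\phi(\lambda z)=\lambda\phi(z)$ and $\tau(\lambda z)=\lambda^2\tau(z)$ for all $\lambda>0$, while the homomorphism property gives $\phi(z+z')=\phi(z)+\phi(z')$ together with $\tau(z+z')=\tau(z)+\tau(z')+2\omega_n(\phi(z),\phi(z'))$. Additivity of $\phi$ forces $\phi(-z)=-\phi(z)$, so the homogeneity extends to all $\lambda\in\mathbb{R}$, and evaluating on the standard basis shows $\phi$ is linear, $\phi(z)=Tz$. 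Setting $z'=z$ in the $\tau$-identity and using $\omega_n(Tz,Tz)=0$ gives $\tau(2z)=2\tau(z)$, whereas homogeneity gives $\tau(2z)=4\tau(z)$; hence $\tau\equiv 0$. Feeding this back in yields $z^tT^tJ_nTz'=0$ for all $z,z'$, i.e.\ $T^tJ_nT=0$.

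\medskip

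\noindent\textbf{The ``only if'' direction, Heisenberg source.} The main obstacle here is that a priori both coordinate functions of $A$ depend jointly on $z$ and $t$, so the argument above does not apply verbatim; I would resolve this by first restricting to the centre. Setting $z=z'=0$ and writing $g(t):=\phi(0,t)$, $h(t):=\tau(0,t)$, I obtain additivity of $g$ and $h$ together with $g(\lambda^2 t)=\lambda g(t)$ and $h(\lambda^2 t)=\lambda^2 h(t)$; the scaling-degree mismatch $g(2t)=2g(t)$ versus $g(2t)=\sqrt{2}\,g(t)$ forces $g\equiv 0$, while the genuine homogeneity of $h$ gives $h(t)=at$ with $a:=\tau(0,1)$, so $A(0,t)=(0,at)$. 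Using the decomposition $(z,t)=(z,0)\ast(0,t)$ and the homomorphism property then gives $A(z,t)=A(z,0)\ast(0,at)=(\Phi(z),\,\sigma(z)+at)$, where $\Phi(z):=\phi(z,0)$ and $\sigma(z):=\tau(z,0)$ satisfy $\Phi(\lambda z)=\lambda\Phi(z)$, $\sigma(\lambda z)=\lambda^2\sigma(z)$. Finally I would run the Euclidean argument on the elements $(z,0)$, whose product $(z,0)\ast(z',0)=(z+z',2\omega_m(z,z'))$ reproduces the additive structure with an extra central term: this shows $\Phi(z)=Tz$ is linear, kills $\sigma$ by the same degree-mismatch trick, and leaves the identity $\omega_n(Tz,Tz')=a\,\omega_m(z,z')$, that is $aJ_m=T^tJ_nT$. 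This completes the characterization.
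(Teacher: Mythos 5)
Your proof is correct. Both ``if'' directions are routine verifications, and the ``only if'' directions are carried out cleanly: the degree-mismatch arguments (additivity gives $\tau(2z)=2\tau(z)$ while dilation-equivariance gives $\tau(2z)=4\tau(z)$, and likewise $g(2t)=2g(t)$ versus $g(2t)=\sqrt2\,g(t)$ on the centre) are exactly the right tool for killing the components whose additive and dilation degrees disagree, and the remaining identities $T^tJ_nT=0$, respectively $aJ_m=T^tJ_nT$, drop out of the central coordinate of the homomorphism equation. One presentational remark: on the centre, $h$ is additive only modulo the correction term $2\omega_n(g(t),g(t'))$, so strictly you should establish $g\equiv 0$ before invoking additivity of $h$; your argument does proceed in that order, but it would be worth making the dependence explicit. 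Note also that your argument needs no continuity hypothesis, since real homogeneity of $\phi$ comes directly from commutation with dilations rather than from a continuity-plus-$\mathbb{Q}$-linearity argument.

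Your route differs from the paper's, which does not give a computation at all: it cites the classification of topological automorphisms of $\mathbb{H}^n$ (Theorem 1.22 in Folland's book) and asserts that an analogous argument handles the non-surjective cases $\mathbb{R}^m\to\mathbb{H}^n$ and $\mathbb{H}^m\to\mathbb{H}^n$ with $m<n$. Your proof has the advantage of being self-contained and elementary, and of making transparent exactly which structural features are used (the grading of the dilations against the group law); the paper's approach is shorter on the page but leaves the adaptation from automorphisms to non-surjective homogeneous homomorphisms to the reader.
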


\begin{proof}
The automorphisms of $\mathbb{H}^n$ (as a topological group) are well known, see for instance Theorem 1.22 in \cite{folland2016harmonic} for their classification. An analogous argument yields the expression for homogeneous homomorphisms $A:\mathbb{R}^m \to \mathbb{H}^n$ or $A:\mathbb{H}^m \to \mathbb{H}^n$ for $m<n$.
\end{proof}

In this note we are particularly interested in homogeneous homomorphisms which are injective. According to the formula in Lemma \ref{l:homohHomo} a necessary condition for a homogeneous homomorphism $A:\mathbb{H}^{m}\to\mathbb{H}^{n}$ to be injective is that $a \neq 0$. Since $J_m$ is injective, it then follows further that necessarily $m\leq n$ and $\mathrm{rank}(T)= 2m$. Taking the determinant on both sides of the identity $aJ_m=T^{t}J_nT$ determines the constant $a$ in terms of $T$. This yields the following characterization:

\begin{lemma}
A map $A:\mathbb{H}^{m}\to\mathbb{H}^{n}$ is an injective  homogeneous homomorphism if and only if $m\leq n$, $A(z,t)=(Tz,at)$ for all $(z,t) \in \mathbb{H}^m$ with
\begin{displaymath}
T = \left\{\begin{array}{ll}\sqrt{a} B,&\text{if }a>0,\\ \sqrt{-a} B \tau_m,&\text{if }a<0,\end{array} \right.
\end{displaymath}
where $B$ is a symplectic matrix in the sense that $B^t J_n B = J_m$, $\tau_m = \begin{pmatrix} 0 & E_m\\ E_m &0 \end{pmatrix}$, and
\begin{displaymath}
a= \sqrt[2m]{ \det (T^t J_n T)}>0\quad\text{or}\quad a= - \sqrt[2m]{ \det (T^t J_n T)}<0.
\end{displaymath}
\end{lemma}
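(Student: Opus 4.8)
The plan is to build directly on Lemma \ref{l:homohHomo} and the discussion preceding the statement, which already does much of the work. By Lemma \ref{l:homohHomo}, any homogeneous homomorphism $A:\mathbb{H}^m\to\mathbb{H}^n$ has the form $A(z,t)=(Tz,at)$ with $T^tJ_nT=aJ_m$, and the preceding remarks show that injectivity is equivalent to requiring $a\neq 0$ together with $\mathrm{rank}(T)=2m$ (whence $m\leq n$). So what remains is to translate the algebraic constraint ``$T^tJ_nT=aJ_m$ with $a\neq 0$ and $T$ of full column rank'' into the asserted normal form, and conversely.

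First I would pin down $a$. Taking determinants in the identity $aJ_m=T^tJ_nT$ and using $\det J_m=1$ -- which follows from a sign count for the block row-swap bringing $J_m$ to block-diagonal form, or from $\det J_m=\mathrm{Pf}(J_m)^2$ -- gives
\begin{equation}
a^{2m}=\det(T^tJ_nT).\nonumber
\end{equation}
In particular $\det(T^tJ_nT)>0$ and $|a|=\sqrt[2m]{\det(T^tJ_nT)}$, which fixes $a$ up to the sign recorded in the two displayed cases.

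Next I would extract the symplectic factor. If $a>0$, set $B:=a^{-1/2}T$; then $B^tJ_nB=a^{-1}T^tJ_nT=J_m$, so $B$ is symplectic and $T=\sqrt{a}\,B$. If $a<0$, set $B:=(-a)^{-1/2}T\tau_m$ (noting $\tau_m^{-1}=\tau_m$); using $\tau_m^t=\tau_m$ and the identity $\tau_mJ_m\tau_m=-J_m$, which I would verify by one block multiplication, I obtain $B^tJ_nB=(-a)^{-1}\tau_m(T^tJ_nT)\tau_m=(-a)^{-1}a\,\tau_mJ_m\tau_m=J_m$, so again $B$ is symplectic and $T=\sqrt{-a}\,B\tau_m$. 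This establishes the ``only if'' direction.

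For the converse I would run these computations backwards. Given $T$ of the stated form with $B^tJ_nB=J_m$, a direct calculation -- invoking $\tau_mJ_m\tau_m=-J_m$ in the case $a<0$ -- yields $T^tJ_nT=aJ_m$, so Lemma \ref{l:homohHomo} makes $A$ a homogeneous homomorphism; and since $B^tJ_nB=J_m$ with $J_m$ invertible forces $Bv=0\Rightarrow J_mv=0\Rightarrow v=0$, the matrix $B$, hence $T$ (as $\tau_m$ is invertible), has trivial kernel and thus full column rank $2m$, while $a\neq 0$, so $A$ is injective. The only genuinely non-formal point is the case $a<0$: the naive normalization $T/\sqrt{|a|}$ is \emph{not} symplectic, and the role of $\tau_m$ is precisely to absorb the sign reversal through $\tau_mJ_m\tau_m=-J_m$. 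Everything else is bookkeeping already prepared by Lemma \ref{l:homohHomo} and the determinant identity.
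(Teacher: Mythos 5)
Your proposal is correct and follows essentially the same route the paper takes: it starts from Lemma \ref{l:homohHomo}, observes that injectivity forces $a\neq 0$ and $\mathrm{rank}(T)=2m$ (hence $m\leq n$), and determines $|a|$ by taking determinants in $aJ_m=T^tJ_nT$, exactly as in the discussion the paper gives immediately before the statement. The only material you add beyond the paper's sketch is the explicit normalization $B=a^{-1/2}T$ (resp.\ $B=(-a)^{-1/2}T\tau_m$, using $\tau_mJ_m\tau_m=-J_m$) and the converse verification, both of which check out.
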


\begin{definition}
Let $(G,\ast)$ be a group with neutral element $e$. We say that a  \emph{norm} on $G$ is a map $N:G\to\mathbb{R}_{\geq0}$ that satisfies
\begin{align}
&i)\ N(g)=0\Leftrightarrow g=e,\ \forall g\in G,\nonumber\\
&ii)\ N(g^{-1})=N(g),\ \forall g\in G,\nonumber\\
&iii)\ N(g*g')\leq N(g)+N(g'),\ \forall g,g'\in G.\nonumber
\end{align}
\end{definition}

\begin{definition}
Let $(G,\ *)$ be a group. A metric $d:G\times G\to\mathbb{R}_{\geq0}$
is called \emph{left-invariant}, if for every $g_{o}\in G$, the map $L_{g_{o}}:(G,d)\to(G,d),\ g\mapsto g_{o}\ast g$ is an isometry, that is, $d(g_{o}\ast g,g_{o}\ast g')=d(g,g')$, for all $g,g'\in G$.
\end{definition}
Every norm $N:G\to\mathbb{R}_{\geq0}$ induces
a left-invariant metric $d_{N}:G\times G\to\mathbb{R}_{\geq0}$,
and vice versa. More precisely, we can establish the following bijection
\begin{align}
\{N:G\to\mathbb{R}_{\geq0}:\;N\ \text{is a norm}\}&\to\{d:G\times G\to\mathbb{R}_{\geq0}:\;d\ \text{is a left-invariant metric}\}\nonumber\\
N&\mapsto d_{N}:G\times G\to\mathbb{R}_{\geq0},\ (g,g')\mapsto N(g^{-1}*g'),\nonumber
\end{align}
\begin{align}
\{d:G\times G\to\mathbb{R}_{\geq0}:\;d\ \text{is a left-invariant metric}\}&\to\{N:G\to\mathbb{R}_{\geq0}:\;N\ \text{is a norm}\}\nonumber\\
d&\mapsto N_{d}:G\to\mathbb{R}_{\geq0},\ g\mapsto d(g,e).\nonumber
\end{align}

\begin{definition}
A norm $N:\mathbb{H}^{n}\to\mathbb{R}_{\geq0}$ on the Heisenberg group is called \emph{homogeneous} if
\begin{align}
\ N(\delta_{\lambda}(p))=\lambda N(p),\ \text{for all }\lambda>0,\ \text{for all } p\in\mathbb{H}^{n}.\nonumber
\end{align}
\nonumber
\end{definition}
It is easy to see that a norm $N$ on $\mathbb{H}^{n}$ is homogeneous if and only its associated left-invariant metric is homogeneous in the sense that $d_N(\delta_{\lambda}(p),\delta_{\lambda}(q))=\lambda d_N (p,q)$. 
Every left-invariant distance on $\mathbb{H}^n$ induced by a homogeneous norm is a homogeneous distance in the sense of \cite[Definition 2.20]{LDR}.
From now on, we will use the expression "\emph{homogeneous distance on $\mathbb{H}^{n}$}" to talk about the left-invariant metric induced by a homogeneous norm. It follows from \cite[Proposition 2.26]{LDR} that the topology induced by any homogeneous distance on $\mathbb{H}^{n}$ coincides with the Euclidean topology on $\mathbb{R}^{2n+1}$, and from \cite[Corollary 2.28]{LDR} that any homogeneous norm is  continuous with respect to the Euclidean topologies of $\mathbb{R}^{2n+1}$ and $\mathbb{R}$.
In particular, we note that any two homogeneous distances on $\mathbb{H}^{n}$ induce the same topology.
In fact, once the homogeneous distances are known to be continuous with respect to the standard topology on $\mathbb{R}^{2n+1}$ one can show by a standard argument the even stronger fact that they are bi-Lipschitz equivalent. This is well known and can be found for instance in \cite[Lemma 1]{goodman1977}. On the other hand, the metric structure induced by a homogeneous norm $N$ on $\mathbb{H}^n$ is very different from $\mathbb{R}^{2n+1}$ endowed with the Euclidean distance $d_{eucl}$. The two distances $d_N$ and $d_{eucl}$ are not bi-Lipschitz equivalent for any choice of homogeneous norm $N$ on $\mathbb{H}^n$, however, one has that the identity map $(\mathbb{H}^n,d_N)\to (\mathbb{R}^{2n+1},d_{eucl})$ is locally Lipschitz.

\subsection{Projected norms}\label{s:ProjectedNorms}

Certain properties of a homogenous norm on $\mathbb{H}^n$ are encoded by its `projection' to $\mathbb{R}^{2n}\times \{0\}$. Our starting point is the following observation, which relates a homogeneous norm to a norm (in the classical sense of the word) in Euclidean space.

\begin{proposition}\label{p:ProjNormIsNorm}
For every homogeneous norm $N$ on $\mathbb{H}^n$, the function
\begin{displaymath}
\|\cdot\|: \mathbb{R}^{2n}\to [0,+\infty),\quad \|z\|:= N((z,0)).
\end{displaymath}
defines a norm on $\mathbb{R}^{2n}$.
\end{proposition}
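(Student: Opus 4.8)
The plan is to verify the three norm axioms for $\|\cdot\|$ directly from the corresponding properties of $N$ together with homogeneity. Definiteness is immediate: $\|z\| = N((z,0)) \geq 0$, and $\|z\| = 0$ holds iff $(z,0) = e = (0,0)$, i.e. iff $z = 0$. For absolute homogeneity, I would first observe that for $\lambda > 0$ the scalar multiplication on $\mathbb{R}^{2n}\times\{0\}$ is exactly the restriction of the Heisenberg dilation, since $\delta_\lambda(z,0) = (\lambda z, \lambda^2\cdot 0) = (\lambda z,0)$; hence $\|\lambda z\| = N(\delta_\lambda(z,0)) = \lambda N((z,0)) = \lambda\|z\|$. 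The case $\lambda = 0$ is trivial, and for $\lambda < 0$ I would write $\|\lambda z\| = |\lambda|\,\|-z\|$ and invoke the symmetry axiom $N(g^{-1}) = N(g)$: since $(z,0)^{-1} = (-z,0)$, we get $\|-z\| = N((z,0)^{-1}) = N((z,0)) = \|z\|$, so that $\|\lambda z\| = |\lambda|\,\|z\|$ for every $\lambda \in \mathbb{R}$.

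The only substantial point is the triangle inequality $\|z+z'\| \leq \|z\| + \|z'\|$, and the difficulty is that $\mathbb{R}^{2n}\times\{0\}$ is \emph{not} a subgroup of $\mathbb{H}^n$: the product $(z,0)\ast(z',0) = (z+z', 2\omega_n(z,z'))$ generally has a nonzero vertical component, so subadditivity of $N$ applied to $(z,0)$ and $(z',0)$ does not bound $N((z+z',0))$ directly. The idea to circumvent this is to realize $(z+z',0)$ as a limit of genuine group products whose vertical error tends to $0$, while keeping the total $N$-cost under control via homogeneity.

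Concretely, for $k \in \mathbb{N}$ I set $r_k := \delta_{1/k}(z,0) \ast \delta_{1/k}(z',0) = \left(\tfrac{z+z'}{k}, \tfrac{2}{k^2}\omega_n(z,z')\right)$. Because $r_k$ carries the same horizontal vector at each factor and $\omega_n(v,v) = 0$ by skew-symmetry, its $k$-th power is obtained by simply adding coordinates, giving $r_k^{\ast k} = \left(z+z', \tfrac{2}{k}\omega_n(z,z')\right)$. Applying subadditivity of $N$ first to the $k$ equal factors and then to the two factors defining $r_k$, and using homogeneity, I obtain
\begin{align}
N\bigl(r_k^{\ast k}\bigr) \leq k\,N(r_k) \leq k\bigl(N(\delta_{1/k}(z,0)) + N(\delta_{1/k}(z',0))\bigr) = k\Bigl(\tfrac1k\|z\| + \tfrac1k\|z'\|\Bigr) = \|z\| + \|z'\|. \nonumber
\end{align}
Thus $N\bigl((z+z', \tfrac{2}{k}\omega_n(z,z'))\bigr) \leq \|z\| + \|z'\|$ for every $k$. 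Letting $k \to \infty$ and invoking the continuity of $N$ with respect to the Euclidean topology recorded above (from \cite[Corollary 2.28]{LDR}), the left-hand side converges to $N((z+z',0)) = \|z+z'\|$, which yields the triangle inequality.

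The main obstacle is precisely this vertical-component defect in the triangle inequality; once it is resolved by the dilation-and-power device above, which trades the non-subgroup defect for a vertical error of order $1/k$ that vanishes in the limit, the remaining axioms are routine. I would emphasize that the continuity of $N$ is the only non-elementary input, and it has already been established.
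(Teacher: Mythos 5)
Your proof is correct, and for the triangle inequality it takes a genuinely different route from the paper's. The paper first proves a separate monotonicity lemma, $N((z,0))\leq N((z,t))$ for all $(z,t)$ (Lemma \ref{l:normHorizVert}, established by the doubling identity $(z,t)\ast(z,t)=(2z,2t)$, homogeneity, iteration, and continuity), and then gets the triangle inequality in one line: $N((z+w,0))\leq N((z+w,2\omega_n(z,w)))=N((z,0)\ast(w,0))\leq \|z\|+\|w\|$. You instead bypass the monotonicity lemma entirely with the rescale-multiply-power device, producing the points $\bigl(z+z',\tfrac{2}{k}\omega_n(z,z')\bigr)$ with $N$-value at most $\|z\|+\|z'\|$ and passing to the limit. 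The two arguments are close cousins -- both ultimately rest on iterated subadditivity combined with homogeneity, plus continuity of $N$ as the only non-elementary input, and both exhibit a sequence converging to $(z+z',0)$ with controlled $N$-cost. What the paper's route buys is the monotonicity statement as a standalone lemma, which is reused later (for instance in the proof of Proposition \ref{p:strictConvexityNecessary}); your route is slightly more self-contained for the triangle inequality alone but does not produce that reusable byproduct. Your explicit treatment of homogeneity for $\lambda\leq 0$ via the symmetry axiom $N(g^{-1})=N(g)$ is also fine (the paper dismisses this as immediate).
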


\begin{proof}
Homogeneity and positive definiteness of $\|\cdot\|$ follow immediately from the corresponding properties of $N$. (Recall that the Heisenberg dilation acts like the usual scalar multiplication on points in $\mathbb{R}^{2n}\times \{0\} \subset  \mathbb{H}^n$.) The triangle inequality for $\|\cdot\|$ is based on the fact that $N((z,0))\leq N((z,t))$ for all $(z,t)\in \mathbb{H}^n$, which we record in Lemma \ref{l:normHorizVert}. Taking this for granted, we obtain
\begin{align*}
\|z+w\| &:= N((z+w,0)) \\&\leq N((z+w,2\omega_n(z,w))) = N((z,0)\ast (w,0))\leq N((z,0))+ N((w,0))\\& = \|z\| + \|w\|,
\end{align*}
for all $z,w \in \mathbb{R}^{2n}$, which concludes the proof.
\end{proof}

\begin{lemma}\label{l:normHorizVert}
If $N$ is a homogeneous norm on $\mathbb{H}^n$, then
\begin{displaymath}
N((z,0))\leq N((z,t)),\quad\text{for all }(z,t)\in\mathbb{H}^n.
\end{displaymath}
\end{lemma}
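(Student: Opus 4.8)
The plan is to lower the vertical coordinate by passing to high powers in the group and then rescaling. The starting point is the observation that powers of a point behave linearly in \emph{both} coordinates: writing $p=(z,t)$, an easy induction gives $p^k=(kz,kt)$ for every $k\in\N$. Indeed, if $p^{k-1}=((k-1)z,(k-1)t)$, then
\[
p^k=p^{k-1}\ast p=\bigl(kz,\;(k-1)t+t+2\omega_n((k-1)z,z)\bigr)=(kz,kt),
\]
since $\omega_n((k-1)z,z)=(k-1)\omega_n(z,z)=0$ by skew-symmetry of $\omega_n$.

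Next I would combine two facts about the single point $(kz,kt)$. On the one hand, repeated use of the triangle inequality (property $iii)$ in the definition of a norm) gives $N((kz,kt))=N(p^k)\le kN(p)=kN((z,t))$. On the other hand, this same point is a dilate of a point with a $k$-times smaller vertical component: since $\delta_k(z,t/k)=(kz,k^2\cdot t/k)=(kz,kt)$, homogeneity yields $N((kz,kt))=kN((z,t/k))$. Comparing the two expressions and dividing by $k$ leaves
\[
N((z,t/k))\le N((z,t)),\qquad k\in\N.
\]

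Finally I would let $k\to\infty$. The points $(z,t/k)$ converge to $(z,0)$ in the Euclidean topology, and since any homogeneous norm is continuous with respect to the Euclidean topologies (as recorded in Section~\ref{ss:TheHeisenberggroup}), the left-hand side converges to $N((z,0))$. This gives $N((z,0))\le N((z,t))$, which is the assertion. Note that this argument does not invoke Proposition~\ref{p:ProjNormIsNorm}, so there is no circularity with the use of the lemma in its proof.

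The one genuinely new idea is the middle step: recognizing that the $k$-th power $(kz,kt)$ is simultaneously bounded above by $kN((z,t))$ \emph{and} equal, via the dilation $\delta_k$, to $k$ times the norm of the ``flattened'' point $(z,t/k)$. Everything else is the group axioms for $N$ together with the continuity already established. I do not expect any real obstacle beyond spotting this scaling trick; the passage to the limit is harmless given continuity of $N$.
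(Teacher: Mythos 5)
Your argument is correct and is essentially the paper's own proof: the paper applies the identical power-versus-dilation comparison with $k=2$ (obtaining $N((z,t/2))\le N((z,t))$ from $(z,t)\ast(z,t)=(2z,2t)=\delta_2(z,t/2)$), iterates to get $t/2^n$, and then passes to the limit by continuity, exactly as you do with general $k$. The only difference is cosmetic — general $k$-th powers versus iterated squaring — so no further comment is needed.
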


\begin{proof}
Consider an arbitrary point $(z,t)$ in $\mathbb{H}^n\setminus \{(0,0)\}$. We will show that
\begin{equation}\label{eq:estimateNorm_n}
N\left(\left(z,\frac{t}{2^n}\right)\right) \leq N((z,t)),\quad \text{for all }n\in\mathbb{N}.
\end{equation}
To see why this holds for $n=1$, we rely on the homogeneity and triangle inequality, which yield
\begin{align*}
2 N\left(\left(z,\tfrac{t}{2}\right)\right) \leq N\left(\left(2z,2t\right)\right)= N\left((z,t)\ast (z,t)\right)\leq 2 N(z,t).
\end{align*}
Dividing both sides of the inequality by $2$ yields \eqref{eq:estimateNorm_n} for $n=1$. The estimate \eqref{eq:estimateNorm_n} follows inductively. By continuity of $N$ it then follows that
\begin{displaymath}
N((z,0)) = \lim_{n\to \infty} N\left(\left(z,\frac{t}{2^n}\right)\right) \leq  N((z,t)),
\end{displaymath}
as desired.
\end{proof}

\subsection{Length of curves}\label{s:LengthOfCurves}

Different homogeneous norms on $\mathbb{H}^n$ can yield the same norm $\|\cdot\|$ on $\mathbb{R}^{2n}$, defined as in Proposition \ref{p:ProjNormIsNorm}. The probably best known examples for this phenomenon are the Kor\'{a}nyi norm (Example \ref{t:KoranyiNorm}) and the gauge function induced by the standard sub-Riemannian distance on the Heisenberg group. Even though different norms $N$ and $N'$ induce different distance functions $d_N$ and $d_{N'}$, rectifiable curves have the same length with respect to either metric provided that $N$ and $N'$ project to the same norm $\|\cdot\|$. In order to show this, let us recall that
the \emph{length} $L_d(\gamma)=L(\gamma)$ of a curve $\gamma: [a,b] \to (X,d)$ in a metric space is the supremum of $\sum_{i=1}^k d(\gamma(s_{i-1}),\gamma(s_i))$ over all partitions $a=s_0 \leq s_1 \leq \ldots s_{k}= b$. To explain why the length of curves in $(\mathbb{H}^n,d_N)$ is determined by $\|\cdot\|$, we first recall some theory from abstract metric spaces, following the presentation in \cite{MR1835418}.

\begin{definition}\label{d:speed}Let $(X,d)$ be a metric space and consider a curve $\gamma: I \to X$. The \emph{speed of $\gamma$ at $s$} is defined as
\begin{displaymath}
v_{\gamma}(s):= \lim_{\varepsilon \to 0} \frac{d(\gamma(s),\gamma(s+\varepsilon))}{|\varepsilon|},
\end{displaymath}
provided that this limit exists.
\end{definition}

For a proof of the subsequent result, see Proposition 1.16 in \cite{MR2454453} (or Theorem 2.7.6 in \cite{MR1835418} for the special case of Lipschitz curves). Recall that a curve $\gamma:[a,b]\to (X,d)$ is \emph{absolutely continuous} if for every $\varepsilon>0$ there exists $\delta>0$ such that for every finite collection $\{(a_i,b_i):\; 1\leq i\leq k\}$ of disjoint intervals $(a_i,b_i)\subset [a,b]$ with $\sum_{i=1}^k b_i - a_i <\delta$ one has $\sum_{i=1}^k d(\gamma(a_i),\gamma(b_i))<\varepsilon$.

\begin{thm}\label{t:LengthMetricSpace}
For every absolutely continuous curve $\gamma:[a,b] \to (X,d)$ in a metric space the speed $v_{\gamma}(s)$ exists for almost every $s \in [a,b]$, and the length of $\gamma$ is given by the Lebesgue integral of the speed, that is
\begin{displaymath}
L(\gamma)= \int_a^b v_{\gamma}(s)\;\mathrm{d}s.
\end{displaymath}
\end{thm}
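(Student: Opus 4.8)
The plan is to reduce the statement to the differentiability theory of real-valued absolutely continuous functions by testing the curve against a countable family of distance functions, following the classical \emph{metric derivative} argument. First I would exploit that $\gamma([a,b])$ is compact, hence separable, and fix a countable dense set $\{x_j : j \in \mathbb{N}\}$ in it. For each $j$ set $\phi_j(s) := d(\gamma(s), x_j)$. The reverse triangle inequality gives $|\phi_j(s) - \phi_j(t)| \le d(\gamma(s),\gamma(t))$, so each $\phi_j$ inherits absolute continuity directly from the definition of absolute continuity of $\gamma$; in particular $\phi_j$ is differentiable almost everywhere and satisfies the fundamental theorem of calculus. I would then define
\[
m(s) := \sup_{j} |\phi_j'(s)|,
\]
which is measurable as a countable supremum of measurable functions.

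The heart of the argument is the two-sided estimate asserting that the speed $v_\gamma$ of Definition \ref{d:speed} exists almost everywhere and equals $m$. For the \emph{upper} bound I would first prove the key inequality $d(\gamma(s),\gamma(t)) \le \int_s^t m(r)\,\mathrm{d}r$ for $s<t$: using the triangle inequality together with density one has $d(\gamma(s),\gamma(t)) = \sup_j |\phi_j(s)-\phi_j(t)|$, the value $d(\gamma(s),\gamma(t))$ being attained by $x=\gamma(t)$ and thus approximated along the dense set, and for each $j$ the fundamental theorem of calculus gives $|\phi_j(s)-\phi_j(t)| = \left| \int_s^t \phi_j' \right| \le \int_s^t m$. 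Evaluating this at a Lebesgue point of $m$ yields $\limsup_{\varepsilon\to 0} d(\gamma(s),\gamma(s+\varepsilon))/|\varepsilon| \le m(s)$. For the \emph{lower} bound, each index $j$ gives $|\phi_j'(s)| \le \liminf_{\varepsilon \to 0} d(\gamma(s),\gamma(s+\varepsilon))/|\varepsilon|$, and taking the supremum over $j$ produces $m(s) \le \liminf_{\varepsilon\to 0} d(\gamma(s),\gamma(s+\varepsilon))/|\varepsilon|$. Combining the two bounds on the full-measure set of Lebesgue points of $m$ shows that the limit defining $v_\gamma(s)$ exists and equals $m(s)$.

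Finally I would obtain the length formula via the monotone length function $\ell(s) := L(\gamma|_{[a,s]})$. Applying the key inequality to subcurves gives $\ell(t) - \ell(s) \le \int_s^t m$, so $\ell$ is absolutely continuous with $\ell' \le m$ almost everywhere; conversely $d(\gamma(s),\gamma(t)) \le \ell(t)-\ell(s)$ forces $v_\gamma \le \ell'$, that is $m \le \ell'$ almost everywhere. Hence $\ell' = m = v_\gamma$ almost everywhere, and since $\ell$ is absolutely continuous,
\[
L(\gamma) = \ell(b) - \ell(a) = \int_a^b \ell'(s)\,\mathrm{d}s = \int_a^b v_\gamma(s)\,\mathrm{d}s.
\]

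I expect the main obstacle to be the upper-bound inequality $d(\gamma(s),\gamma(t)) \le \int_s^t m$: it requires simultaneously the density-based identification of $d(\gamma(s),\gamma(t))$ as the supremum of the increments $|\phi_j(s)-\phi_j(t)|$ and the interchange of this supremum with integration, and it is precisely this step that upgrades the pointwise differentiability of the scalar functions $\phi_j$ into genuinely metric information about $\gamma$. The passage to Lebesgue points and the verification that the full two-sided limit — not merely the $\liminf$ and $\limsup$ — exists are the other points demanding care, while the integrability of $m$ needed to justify these manipulations follows a posteriori from $m \le \ell'$ with $\ell$ monotone and finite.
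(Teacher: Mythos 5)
Your argument is correct and complete; it is the standard metric-derivative proof (testing $\gamma$ against the countable family $\phi_j=d(\gamma(\cdot),x_j)$, identifying $v_\gamma$ with $\sup_j|\phi_j'|$ at Lebesgue points, and closing the loop via the length function $\ell$). The paper itself does not prove Theorem \ref{t:LengthMetricSpace} but cites Proposition 1.16 of \cite{MR2454453} and Theorem 2.7.6 of \cite{MR1835418}, whose proofs proceed by exactly this argument, so there is no substantive divergence to report.
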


From this general result one recovers the well-known formula for the length of curves in a normed space.

\begin{ex}\label{ex:LengthNormed}
Let $(X,d)$ be $(\mathbb{R}^k,\|\cdot\|)$ for some choice of norm $\|\cdot\|$. Every absolutely continuous curve $\gamma: [a,b] \to (\mathbb{R}^k,\|\cdot\|)$ is absolutely continuous with respect to the Euclidean distance on $\mathbb{R}^k$ and hence differentiable almost everywhere. If $s\in [a,b]$ is such a point where $\dot{\gamma}(s)$ exists, then
\begin{displaymath}
v_{\gamma}(s) = \lim_{\varepsilon \to 0} \left\|\frac{\gamma(s+\varepsilon)-\gamma(s)}{\varepsilon} \right\| = \|\dot{\gamma}(s)\|
\end{displaymath}
exists
by the homogeneity and continuity of the norm. Hence
\begin{equation}\label{eq:length_normed}
L(\gamma) = \int_a^b \|\dot{\gamma}(s)\| \;\mathrm{d}s.
\end{equation}
\end{ex}

As a second application of Theorem \ref{t:LengthMetricSpace}, we compute the length of curves in $\mathbb{H}^n$ equipped with a homogeneous distance. This result is folklore; we shall include a proof for convenience.

\begin{proposition}\label{p:length_homogeneous}
Assume that $N$ is a homogeneous norm on $\mathbb{H}^n$, and let $\gamma: [a,b] \to (\mathbb{H}^n,d_N)$ be a Lipschitz curve. We denote $\gamma=(\gamma_I,\gamma_{2n+1})$, so that $\gamma_{I}: [a,b] \to \mathbb{R}^{2n}$ is the projection of $\gamma$ to $\mathbb{R}^{2n}\times \{0\} \subset \mathbb{H}^n$.
 Then the length of $\gamma$ with respect to $d_N$ is given by
\begin{displaymath}
L(\gamma) = \int_a^b \|\dot{\gamma}_I(s)\|\;\mathrm{d}s,
\end{displaymath}
where $\|\cdot\|$ is the norm on $\mathbb{R}^{2n}$ induced by $N$ as in Proposition \ref{p:ProjNormIsNorm}.
\end{proposition}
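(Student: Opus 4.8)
The plan is to apply Theorem \ref{t:LengthMetricSpace} to the curve $\gamma$ in $(\mathbb{H}^n, d_N)$ and to show that its metric speed $v_\gamma(s)$ equals $\|\dot\gamma_I(s)\|$ at almost every $s$. Since $\gamma$ is Lipschitz with respect to $d_N$ it is in particular absolutely continuous, so the theorem guarantees that $v_\gamma(s)$ exists almost everywhere and that $L(\gamma)=\int_a^b v_\gamma(s)\,\mathrm{d}s$. The whole task therefore reduces to the pointwise identification of the speed. First I would note that since the Euclidean topology agrees with the $d_N$-topology and the identity map $(\mathbb{H}^n,d_N)\to(\mathbb{R}^{2n+1},d_{eucl})$ is locally Lipschitz (both recorded in the preliminaries), the curve $\gamma$ is also Lipschitz, hence differentiable almost everywhere, as a map into Euclidean $\mathbb{R}^{2n+1}$. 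Fix such a point $s$ of differentiability; in particular $\dot\gamma_I(s)$ exists.

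The key computation is to express $d_N(\gamma(s),\gamma(s+\varepsilon))$ in a form where only the horizontal increment survives in the limit. Using left-invariance and the group law, write $\gamma(s)^{-1}\ast\gamma(s+\varepsilon)=(w_\varepsilon, \tau_\varepsilon)$ where $w_\varepsilon=\gamma_I(s+\varepsilon)-\gamma_I(s)$ and $\tau_\varepsilon$ is the corresponding increment in the vertical coordinate (including the bilinear correction term $2\omega_n$). Then
\begin{displaymath}
\frac{d_N(\gamma(s),\gamma(s+\varepsilon))}{|\varepsilon|}=\frac{1}{|\varepsilon|}N((w_\varepsilon,\tau_\varepsilon))=N\left(\delta_{1/|\varepsilon|}(w_\varepsilon,\tau_\varepsilon)\right)=N\left(\left(\tfrac{w_\varepsilon}{|\varepsilon|},\tfrac{\tau_\varepsilon}{\varepsilon^2}\right)\right),
\end{displaymath}
using homogeneity of $N$. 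Now $w_\varepsilon/|\varepsilon|\to |\dot\gamma_I(s)|\cdot(\text{unit vector})$, more precisely $w_\varepsilon/|\varepsilon|\to \dot\gamma_I(s)\,\sgn(\varepsilon)$; the difficulty is that the vertical term $\tau_\varepsilon/\varepsilon^2$ need \emph{not} converge, since $\tau_\varepsilon$ is only known to be $O(\varepsilon)$ (the vertical coordinate of a horizontally-Lipschitz curve is not better than Lipschitz), so $\tau_\varepsilon/\varepsilon^2$ could blow up. This is the main obstacle.

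To resolve it I would invoke Lemma \ref{l:normHorizVert} together with a squeezing argument. The lower bound is immediate: $N((w_\varepsilon/|\varepsilon|, \tau_\varepsilon/\varepsilon^2))\geq N((w_\varepsilon/|\varepsilon|,0))=\|w_\varepsilon/|\varepsilon|\|\to\|\dot\gamma_I(s)\|$ by Lemma \ref{l:normHorizVert} and the continuity of $\|\cdot\|$. For the matching upper bound, the point is that at a point of Euclidean differentiability one has better control: writing $(w_\varepsilon,\tau_\varepsilon)=(|\varepsilon|\dot\gamma_I(s)\sgn\varepsilon + o(\varepsilon),\, \tau_\varepsilon)$ and splitting via the triangle inequality into a horizontal part $(w_\varepsilon,0)$ and a purely vertical part $(0,\tau_\varepsilon')$, one estimates the vertical contribution using that the norm of a purely vertical element $(0,\tau)$ scales like $N((0,\tau))=\sqrt{|\tau|}\,N((0,\sgn\tau))$, so its rescaled contribution is of order $\sqrt{|\tau_\varepsilon|}/|\varepsilon|$; then one shows $\tau_\varepsilon=o(\varepsilon)$ pointwise a.e., whence $\sqrt{|\tau_\varepsilon|}/|\varepsilon|\to 0$. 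The fact $\tau_\varepsilon=o(\varepsilon)$ at a.e.\ $s$ follows because, after the bilinear correction is subtracted, the remaining vertical increment is the increment of the $(2n+1)$-coordinate measured relative to the horizontal motion, and for a horizontally Lipschitz curve this is differentiable a.e.\ with the right cancellation; alternatively, one argues directly that $v_\gamma(s)\le \|\dot\gamma_I(s)\|$ must hold a.e.\ since $\int_a^b v_\gamma \ge \int_a^b \|\dot\gamma_I\|$ would otherwise contradict the reverse length comparison obtained from the first (lower-bound) inequality. Combining the upper and lower bounds gives $v_\gamma(s)=\|\dot\gamma_I(s)\|$ a.e., and integrating via Theorem \ref{t:LengthMetricSpace} completes the proof.
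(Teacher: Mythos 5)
Your overall strategy is the same as the paper's: apply Theorem \ref{t:LengthMetricSpace}, identify the metric speed $v_\gamma(s)$ with $\|\dot\gamma_I(s)\|$ at a.e.\ $s$ via the rescaling $|\varepsilon|^{-1}N((w_\varepsilon,\tau_\varepsilon))=N((w_\varepsilon/|\varepsilon|,\tau_\varepsilon/\varepsilon^2))$, and integrate. You also correctly isolate the crux, namely controlling the rescaled vertical increment $\tau_\varepsilon/\varepsilon^2$. However, your resolution of that crux contains a genuine error: you claim that $\tau_\varepsilon=o(\varepsilon)$ pointwise a.e.\ implies $\sqrt{|\tau_\varepsilon|}/|\varepsilon|\to 0$. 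It does not: $\tau_\varepsilon=o(\varepsilon)$ only gives $\sqrt{|\tau_\varepsilon|}/|\varepsilon|=o(|\varepsilon|^{-1/2})$, which may blow up. What is actually needed is $\tau_\varepsilon=o(\varepsilon^2)$, i.e.\ that $\delta_{1/\varepsilon}(\gamma(s)^{-1}\ast\gamma(s+\varepsilon))$ converges to a \emph{horizontal} vector, and this is precisely the content of Pansu differentiability of Lipschitz curves, which the paper invokes (via \eqref{eq:Pansu_diff}) and which you neither prove nor cite. A crude bound from the Lipschitz property only yields $\tau_\varepsilon=O(\varepsilon^2)$, which is not enough; upgrading to $o(\varepsilon^2)$ requires either Pansu's theorem or a Lebesgue-point argument applied to $\dot\gamma_I$ in the identity $\tau_\varepsilon=2\int_s^{s+\varepsilon}\omega_n(\gamma_I(\xi)-\gamma_I(s),\dot\gamma_I(\xi))\,\mathrm{d}\xi$ (valid since a $d_N$-Lipschitz curve is horizontal).

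Your fallback argument is also not substantiated. The lower bound $v_\gamma(s)\geq\|\dot\gamma_I(s)\|$ (which you prove correctly from Lemma \ref{l:normHorizVert}) gives $L(\gamma)=\int_a^b v_\gamma\geq\int_a^b\|\dot\gamma_I\|$, but you have no independent ``reverse length comparison'' $L(\gamma)\leq\int_a^b\|\dot\gamma_I\|$ to play it against; deriving that upper bound is exactly the missing half of the proposition, so the argument is circular as stated. To repair the proof, either cite Pansu's differentiability theorem for curves as the paper does, or carry out the Lebesgue-point computation above to establish $\tau_\varepsilon=o(\varepsilon^2)$ at a.e.\ $s$.
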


The proof of this proposition is a rather immediate corollary of Theorem \ref{t:LengthMetricSpace} if one makes use of the theory of horizontal curves. A \emph{horizontal curve} in $\mathbb{H}^n$ is an absolutely continuous curve $\gamma:[a,b] \to \mathbb{R}^{2n+1}$ with the property that
\begin{displaymath}
\dot{\gamma}(s) \in H_{\gamma(s)}, \quad\text{for almost every }s\in [a,b],
\end{displaymath}
where for $p\in \mathbb{H}^n$, we set
\begin{displaymath}
H_p:= \mathrm{span}\left\{X_{1,p},\ldots,X_{n,p},Y_{1,p}\ldots,Y_{n,p} \right\}.
\end{displaymath}
Here $X_i$ and $Y_i$, $i=1,\ldots,n$, are the left-invariant vector fields (with respect to $\ast$) which at the origin agree with the standard basis vectors: $X_{i,0}=e_i$ and $Y_{i,0}= e_{n+i}$.
Denoting the $(2n+1)$ components of an absolutely continuous curve $\gamma:[a,b]\to \mathbb{H}^n$ by $\gamma_i$, $i=1,\ldots,2n+1$, it follows that $\gamma$ is horizontal if and only if
\begin{equation}\label{eq:horiz_comp}
\dot{\gamma}_{2n+1}(s) = 2 \sum_{i=1}^n \dot{\gamma}_i(s) \gamma_{n+i}(s)-\dot{\gamma}_{n+i}(s)\gamma_i(s),\quad\text{for almost every }s\in [a,b].
\end{equation}
It is well known that a horizontal curve $\gamma:[a,b]\to \mathbb{H}^n$ is rectifiable and admits a Lipschitz parametrization (see for instance \cite[Proposition 1.1]{MR3417082} for a proof and  note that this statement holds for any homogeneous norm on $\mathbb{H}^n$).
 In converse direction, every rectifiable curve admits a $1$-Lipschitz parametrization and this parametrization is horizontal, see \cite{MR979599}.

Curves in $\mathbb{H}^n$ which are Lipschitz with respect to a homogeneous distance can be differentiated almost everywhere not only in the usual, Euclidean, sense, but also in the sense of Pansu \cite{MR979599}, as a consequence of a far more general result concerning mappings between Carnot groups. If it exists, the Pansu differential of a curve $\gamma:[a,b] \to \mathbb{H}^n$ at a point $s\in [a,b]$ is a homogeneous homomorphism $D\gamma(s):\mathbb{R} \to \mathbb{H}^n$, given by
\begin{displaymath}
 D\gamma(s) r = \lim_{\varepsilon \to 0} \delta_{\frac{1}{\varepsilon}} \left(\gamma(s)^{-1}\ast \gamma(s+\varepsilon)\right) r.
\end{displaymath}
If $\gamma$ is at the same time differentiable at $s$ in the usual sense, then
\begin{equation}\label{eq:Pansu_diff}
D\gamma(s) r = \begin{pmatrix}\dot{\gamma}_1(s)\\\vdots\\\dot{\gamma}_{2n}(s)\\0\end{pmatrix}r.
\end{equation}

With this information at hand, we can proceed to the proof of Proposition \ref{p:length_homogeneous}.

\begin{proof}[Proof of Proposition \ref{p:length_homogeneous}] Since $\gamma$ is Lipschitz, it is a horizontal curve.
 Let $s \in [a,b]$ be a point in which $\gamma$ is differentiable in the usual sense and in the sense of Pansu (according to the discussion above, almost every point in $[a,b]$ is such a point).
From these assumptions, the homogeneity of the norm $N$, and the formula \eqref{eq:Pansu_diff} it follows that the speed of $\gamma$ exists at $s$ in the sense of Definition \label{d:speed} and is given by
\begin{align*}
v_{\gamma}(s)
:= N\left( \lim_{\varepsilon \to 0}\left(\delta_{\frac{1}{\varepsilon}}(\gamma(s)^{-1}\ast \gamma(s+\varepsilon))\right) \right)
 = N\left(\left(\dot{\gamma}_I(s),0\right)\right).
\end{align*}
Here, $\gamma_I:=(\gamma_1,\ldots,\gamma_{2n})$.
Inserting this expression into the formula for the length in Theorem \ref{t:LengthMetricSpace} completes the proof of the proposition.
\end{proof}

Proposition \ref{p:length_homogeneous} shows that in order to study the length of curves with respect to a homogeneous distance $d_N$, it suffices to consider the curves with respect to the sub-Finsler distance induced by the norm $\|\cdot\|$ which is associated to $N$ as in Proposition \ref{p:ProjNormIsNorm}. This has been observed in \cite{LDNG} for the first Heisenberg group (see the remark below Proposition 6.2 in \cite{LDNG}, where this is formulated in terms of the projection of the unit ball to the $(x,y)$-plane).

%
%
%

\begin{definition}
Given a norm $\|\cdot\|$ on $\mathbb{R}^{2n}$, the \emph{sub-Finsler distance} associated to $\|\cdot\|$ on $\mathbb{H}^n$ is the distance given by
\begin{displaymath}
d_{SF}(p,q):= \inf_{\gamma} \int_a^b \|\dot{\gamma}_I(s)\| \;\mathrm{d}s,
\end{displaymath}
where the infimum is taken over all horizontal curves $\gamma=(\gamma_{I},\gamma_{2n+1}): [a,b] \to \mathbb{H}^n$ with $\gamma(a)=p$ and $\gamma(b)=q$.
\end{definition}

Since $\|\cdot\|$, as a norm on $\mathbb{R}^{2n}$ is comparable to the Euclidean norm, it follows that $d_{SF}$ is comparable to the standard sub-Riemannian distance on $\mathbb{H}^n$, in particular, it is finite and positive. Clearly, $d_{SF}$ also satisfies the triangle inequality. Since left-translation is a bijection which sends horizontal curves to horizontal curves, preserving $\|\dot{\gamma}_I\|$, it follows further that $d_{SF}$ is left-invariant. Finally, it is homogeneous since $\|\cdot\|$ is homogeneous with respect to scalar multiplication, and Heisenberg dilations preserve horizontality of curves.

A particular role will be played in the following by geodesics with respect to $d_{SF}$. By a \emph{geodesic} $\gamma:I \to (X,d)$ in a metric space, we mean an isometric embedding of $I=[a,b]$ or $I=\mathbb{R}$ into $(X,d)$, that is,
\begin{displaymath}
d(\gamma(s),\gamma(s'))=|s-s'|,\quad\text{for all }s,s'\in I.
\end{displaymath}
If we have  $I=\mathbb{R}$ in the above definition, we say that $\gamma$ is an \emph{infinite geodesic}.
We stress that in Riemannian or sub-Riemannian geometry the word ``geodesic'' is also used with a different meaning, see for instance the discussion in \cite[Remark 1]{liu1995shortest}.

\begin{lemma}\label{l:proj_norm_agree}
Let $\|\cdot\|$ be a norm on $\mathbb{R}^{2n}$ and define $d_{SF}$ to be the associated sub-Finsler distance. Then
\begin{displaymath}
d_{SF}((z,0),(0,0))= \|z\|,\quad \text{for all }z\in\mathbb{R}^{2n}.
\end{displaymath}
\end{lemma}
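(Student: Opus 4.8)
The plan is to prove the two inequalities $d_{SF}((z,0),(0,0)) \le \|z\|$ and $d_{SF}((z,0),(0,0)) \ge \|z\|$ separately, the first by exhibiting an explicit competitor and the second by comparison with lengths in the normed space $(\mathbb{R}^{2n},\|\cdot\|)$.

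For the upper bound I would simply lift the Euclidean segment. Consider $\gamma:[0,1]\to \mathbb{H}^n$, $\gamma(s) = (sz, 0)$, so that $\gamma_I(s) = sz$ and $\gamma_{2n+1}(s) \equiv 0$. The key point is to verify that this curve is horizontal. Inserting $\gamma_i(s) = sz_i$ (hence $\gamma_{n+i}(s) = sz_{n+i}$ and $\dot\gamma_i(s) = z_i$) into the horizontality condition \eqref{eq:horiz_comp} produces a right-hand side equal to $2s\sum_{i=1}^n (z_i z_{n+i} - z_{n+i}z_i) = 0$, which matches $\dot\gamma_{2n+1}\equiv 0$; equivalently, this is the identity $\omega_n(z,z)=0$, valid by skew-symmetry of $J_n$. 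Thus $\gamma$ is an admissible horizontal curve from $(0,0)$ to $(z,0)$, and since $\int_0^1 \|\dot\gamma_I(s)\|\,\mathrm{d}s = \int_0^1 \|z\|\,\mathrm{d}s = \|z\|$, the definition of $d_{SF}$ gives $d_{SF}((z,0),(0,0)) \le \|z\|$.

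For the lower bound, let $\gamma:[a,b]\to\mathbb{H}^n$ be any horizontal curve joining $(0,0)$ and $(z,0)$; then its projection $\gamma_I$ is an absolutely continuous curve in $\mathbb{R}^{2n}$ whose endpoints are $0$ and $z$ in some order. I would then invoke the vector-valued triangle inequality for the Lebesgue integral in $(\mathbb{R}^{2n},\|\cdot\|)$:
\[
\|z\| = \|\gamma_I(b) - \gamma_I(a)\| = \left\| \int_a^b \dot\gamma_I(s)\,\mathrm{d}s \right\| \le \int_a^b \|\dot\gamma_I(s)\|\,\mathrm{d}s,
\]
where the last integral is precisely the quantity minimized in the definition of $d_{SF}$ (and, by Example \ref{ex:LengthNormed}, equals the length of $\gamma_I$ in $(\mathbb{R}^{2n},\|\cdot\|)$, so the bound just records that a curve between two points in a normed space is no shorter than the distance between them). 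Taking the infimum over all admissible $\gamma$ yields $d_{SF}((z,0),(0,0)) \ge \|z\|$, and combining the two inequalities completes the proof.

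I do not expect a serious obstacle. The only two points deserving a word of care are the horizontality of the straight-line lift in the upper bound, which reduces to the antisymmetry identity $\omega_n(z,z)=0$, and the passage from $\gamma_I(b)-\gamma_I(a)$ to $\int_a^b\dot\gamma_I\,\mathrm{d}s$ in the lower bound, which is legitimate because horizontal curves are absolutely continuous and hence so are their projections. Finally, the symmetry $\|-z\|=\|z\|$ of the norm makes the estimate insensitive to the orientation of $\gamma$, so it is harmless that the definition of $d_{SF}$ lists the points as $p=(z,0)$ and $q=(0,0)$ rather than in the reverse order.
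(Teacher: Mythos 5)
Your proof is correct and follows essentially the same route as the paper's: the upper bound by lifting the straight segment from $0$ to $z$ to the horizontal curve $s\mapsto(sz,0)$, and the lower bound by noting that the projection of any admissible horizontal curve is an absolutely continuous curve in $(\mathbb{R}^{2n},\|\cdot\|)$ whose length is at least $\|z\|$. The only cosmetic differences are your parametrization on $[0,1]$ instead of by arclength and your explicit verification of horizontality via $\omega_n(z,z)=0$, which the paper leaves implicit.
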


\begin{proof}
Let $\gamma_I:[0,\|z\|]\to (\mathbb{R}^{2n},\|\cdot\|)$ be the geodesic which parametrizes the line segment that joins $0$ and $z$ in $\mathbb{R}^{2n}$, and note that $\gamma:=(\gamma_I,0):[0,\|z\|]\to \mathbb{H}^n$ is a Lipschitz continuous horizontal curve. Thus we find
\begin{displaymath}
d_{SF}((z,0),(0,0))  \leq \int_0^{\|z\|} \|\dot{\gamma}_I(s)\| \;\mathrm{d}s = L_{\|\cdot\|}(\gamma_I)=\|z\|.
\end{displaymath}
On the other hand, by definition of $d_{SF}$ and Example \ref{ex:LengthNormed}, we find that
\begin{displaymath}
d_{SF}((z,0),(0,0))  \geq \inf_{\sigma} \int_a^b \|\dot{\sigma}(s)\| \;\mathrm{d}s \geq \|z-0\|=\|z\|,
\end{displaymath}
where the infimum is taken over all absolutely continuous curves $\sigma:[a,b]\to \mathbb{R}^{2n}$ connecting $0$ and $z$.
\end{proof}

We wish to compare geodesics in $(\mathbb{H}^n,d_{SF})$ with geodesics for any homogeneous norm $N$ that induces $\|\cdot\|$. To do so, the subsequent characterization is useful.

\begin{lemma}\label{l:geodChar}
Let $(X,d)$ be a metric space. For a curve $\gamma:[a,b]\to X$ the following conditions are equivalent:
\begin{enumerate}
\item\label{i:geod1} $\gamma$ is a geodesic with respect to $d$, that is, $d(\gamma(s),\gamma(s'))=|s-s'|$ for all $s,s'\in [a,b]$,
\item\label{ii:geod2} $L(\gamma)=d(\gamma(a),\gamma(b))$ and $\gamma$ is parameterized by arc-length.
\end{enumerate}
\end{lemma}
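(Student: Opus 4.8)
The plan is to prove the two implications separately, using that for any curve the length is bounded below by the distance between endpoints and is additive.

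\medskip

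\noindent\textbf{Proof sketch.} First I would establish the implication \eqref{i:geod1}$\Rightarrow$\eqref{ii:geod2}. Assume $\gamma$ is a geodesic, so $d(\gamma(s),\gamma(s'))=|s-s'|$ for all $s,s'\in[a,b]$. For any partition $a=s_0\leq s_1\leq\dots\leq s_k=b$ the telescoping sum gives
\begin{displaymath}
\sum_{i=1}^k d(\gamma(s_{i-1}),\gamma(s_i)) = \sum_{i=1}^k (s_i-s_{i-1}) = b-a = d(\gamma(a),\gamma(b)),
\end{displaymath}
so every such sum equals $b-a$; taking the supremum yields $L(\gamma)=b-a=d(\gamma(a),\gamma(b))$. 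For the arc-length parametrization, I would note that the restriction $\gamma|_{[a,s]}$ is itself a geodesic, so by the same computation $L(\gamma|_{[a,s]})=s-a$, which is precisely the statement that $\gamma$ is parametrized by arc-length.

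\medskip

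For the converse \eqref{ii:geod2}$\Rightarrow$\eqref{i:geod1}, suppose $L(\gamma)=d(\gamma(a),\gamma(b))$ and $\gamma$ is parametrized by arc-length, i.e.\ $L(\gamma|_{[a,s]})=s-a$ for all $s$. Fix $s\leq s'$ in $[a,b]$. Length is additive under concatenation, so
\begin{displaymath}
L(\gamma) = L(\gamma|_{[a,s]}) + L(\gamma|_{[s,s']}) + L(\gamma|_{[s',b]}).
\end{displaymath}
On the other hand, the triangle inequality together with the general lower bound $d(\gamma(\alpha),\gamma(\beta))\leq L(\gamma|_{[\alpha,\beta]})$ gives
\begin{displaymath}
d(\gamma(a),\gamma(b)) \leq d(\gamma(a),\gamma(s)) + d(\gamma(s),\gamma(s')) + d(\gamma(s'),\gamma(b)) \leq L(\gamma|_{[a,s]}) + L(\gamma|_{[s,s']}) + L(\gamma|_{[s',b]}).
\end{displaymath}
Since the outer terms are equal by hypothesis, every inequality in the chain must be an equality; in particular $d(\gamma(s),\gamma(s'))=L(\gamma|_{[s,s']})=(s'-a)-(s-a)=s'-s=|s-s'|$, which is the geodesic condition.

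\medskip

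\noindent\textbf{Main obstacle.} The only genuinely delicate point is justifying that the arc-length hypothesis $L(\gamma|_{[a,s]})=s-a$ is equivalent to the phrase ``$\gamma$ is parametrized by arc-length'' and, relatedly, making the additivity of length under concatenation fully rigorous from the supremum-over-partitions definition. Additivity follows because any partition of $[a,b]$ can be refined to include the cut points $s,s'$ without decreasing the partition sum (by the triangle inequality), so the supremum splits; I would spell this out as a short lemma or inline remark. Everything else is a direct manipulation of the triangle inequality and the definition of length, so no further analytic input (such as Theorem \ref{t:LengthMetricSpace}) is required.
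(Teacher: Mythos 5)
Your proof is correct. The paper does not actually prove this lemma --- it only cites \cite[Remark 1.22]{MR1744486} as a reference --- so there is no in-paper argument to compare against; your two implications (telescoping partition sums for \eqref{i:geod1}$\Rightarrow$\eqref{ii:geod2}, and the equality-forcing chain $d(\gamma(a),\gamma(b))\leq \sum d \leq \sum L = L(\gamma)$ for the converse) constitute the standard textbook proof, and your handling of additivity of length via refinement of partitions is exactly the right way to make it rigorous. You are also right that Theorem \ref{t:LengthMetricSpace} is not needed here.
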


This is well known; see for instance \cite[Remark 1.22]{MR1744486}.

%
%
%
%

\begin{proposition}\label{p:geodesics} Assume that $N$ is a homogeneous distance on $\mathbb{H}^n$ and let $\|z\|:= N((z,0))$. Denote by $d_{SF}$ the sub-Finsler distance associated to $\|\cdot\|$.
Let $I= [a,b]$ or $I=\mathbb{R}$. If $\gamma:I \to \mathbb{H}^n$ is a geodesic with respect to $d_N$, then it is also geodesic with respect to $d_{SF}$.
\end{proposition}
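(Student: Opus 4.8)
The plan is to squeeze $d_{SF}(\gamma(s),\gamma(s'))$ between two bounds that both equal $|s-s'|$, exploiting that $d_N$ and $d_{SF}$ induce the \emph{same} length functional on horizontal curves, namely $\gamma \mapsto \int \|\dot\gamma_I\|$. First I would record the elementary comparison $d_N \leq d_{SF}$: for any horizontal curve $\sigma$ joining two points $p,q \in \mathbb{H}^n$, the triangle inequality gives $d_N(p,q) \leq L_{d_N}(\sigma)$, while Proposition~\ref{p:length_homogeneous} identifies $L_{d_N}(\sigma) = \int \|\dot\sigma_I\|$ (a horizontal curve admits a Lipschitz parametrization and length is parametrization-invariant). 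Taking the infimum over all such $\sigma$ and comparing with the definition of $d_{SF}$ yields $d_N(p,q) \leq d_{SF}(p,q)$ for all $p,q$.

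Next I would use that $\gamma$, being a geodesic for $d_N$, satisfies $d_N(\gamma(s),\gamma(s'))=|s-s'|$ and is therefore $1$-Lipschitz with respect to $d_N$; hence, as explained in the discussion preceding Proposition~\ref{p:length_homogeneous}, it is a Lipschitz horizontal curve. Fixing $s<s'$ in $I$, Proposition~\ref{p:length_homogeneous} combined with the geodesic property (Lemma~\ref{l:geodChar}) gives $\int_s^{s'} \|\dot\gamma_I\| = L_{d_N}(\gamma|_{[s,s']}) = d_N(\gamma(s),\gamma(s')) = |s-s'|$. Since the restriction $\gamma|_{[s,s']}$ is itself a horizontal curve joining $\gamma(s)$ and $\gamma(s')$, it is an admissible competitor in the infimum defining $d_{SF}$, so directly $d_{SF}(\gamma(s),\gamma(s')) \leq \int_s^{s'}\|\dot\gamma_I\| = |s-s'|$.

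Combining the two bounds gives $|s-s'| = d_N(\gamma(s),\gamma(s')) \leq d_{SF}(\gamma(s),\gamma(s')) \leq |s-s'|$, so equality holds for all $s,s' \in I$, which is precisely the assertion that $\gamma$ is a geodesic with respect to $d_{SF}$. The same chain of inequalities applies verbatim when $I=\mathbb{R}$, since only restrictions to finite subintervals $[s,s']$ enter the computation.

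I expect the only genuine subtlety to be the justification that the $d_N$-geodesic $\gamma$ is a Lipschitz horizontal curve of the type to which Proposition~\ref{p:length_homogeneous} applies, so that its $d_N$-length is really computed by $\int \|\dot\gamma_I\|$; this rests on the facts about rectifiable and horizontal curves recalled in Section~\ref{s:LengthOfCurves}. Everything else is a soft comparison of the two length structures, and no estimate specific to the non-commutativity of the group is needed.
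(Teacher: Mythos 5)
Your proof is correct and uses the same ingredients as the paper's: Proposition~\ref{p:length_homogeneous}, Lemma~\ref{l:geodChar}, the admissibility of $\gamma|_{[s,s']}$ as a competitor for $d_{SF}$, and the Lipschitz reparametrization of horizontal curves to bound $d_N(p,q)$ by $\int\|\dot\lambda_I\|$ for any competitor $\lambda$. The only difference is presentational: you package the last point as the standing inequality $d_N\leq d_{SF}$ and conclude by a direct squeeze, whereas the paper runs the same comparison as a proof by contradiction against a hypothetical shorter competitor.
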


\begin{proof}
Let $\gamma:[s,s']\to (\mathbb{H}^n,d_N)$ be  geodesic. We claim that
\begin{equation}\label{eq:geodesic_claim}
d_{SF}(\gamma(s),\gamma(s'))= \int_s^{s'} \|\dot{\gamma}_I(\xi)\| \;\mathrm{d}\xi.
\end{equation}
If this is shown then it follows by Proposition \ref{p:length_homogeneous}, Lemma \ref{l:geodChar} and the geodesic assumption on $\gamma$ that
\begin{align*}
d_{SF}(\gamma(s),\gamma(s'))&= \int_s^{s'} \|\dot{\gamma}_I(\xi)\| \;\mathrm{d}\xi=L_{d_N}(\gamma|_{[s,s']})= d_N(\gamma(s),\gamma(s'))= |s-s'|.
\end{align*}
Since this holds for arbitrary $s<s'$ in $I$, it then follows that $\gamma$ is a geodesic with respect to $d_{SF}$. It remains to establish \eqref{eq:geodesic_claim}. Assume towards a contradiction that there exists a horizontal curve $\lambda:[t,t']\to \mathbb{H}^n$, connecting $\gamma(s)$ and $\gamma(s')$ such that
\begin{displaymath}
 \int_t^{t'} \|\dot{\lambda}_I(\xi)\| \;\mathrm{d}\xi <  \int_s^{s'} \|\dot{\gamma}_I(\xi)\| \;\mathrm{d}\xi.
\end{displaymath}
The curve  $\lambda$ is a priori only horizontal and thus absolutely continuous as a map to $\mathbb{R}^{2n+1}$,
but the horizontality ensures that it admits a Lipschitz reparametrization $\widetilde{\lambda}:[\widetilde{t},\widetilde{t}']\to (\mathbb{H}^n,d_N)$; see for instance \cite[Proposition 1.1]{MR3417082}. Hence
\begin{align*}
d_N(\gamma(s),\gamma(s'))\leq L_{d_N}(\lambda) = \int_{\widetilde{t}}^{\widetilde{t}'} \|\dot{\widetilde{\lambda}}_I(\xi)\|\;\mathrm{d}\xi= L_{d_{SF}}(\widetilde{\lambda})  \leq \int_t^{t'} \|\dot{\lambda}_I(\xi)\|\;\mathrm{d}\xi,
\end{align*}
where we have used in the last step that $\lambda$ is admissible in the definition of $d_{SF}$.
Hence we conclude
\begin{align*}
d_N(\gamma(s),\gamma(s'))&\leq \int_t^{t'} \|\dot{\lambda}_I(\xi)\|\;\mathrm{d}\xi <  \int_s^{s'} \|\dot{\gamma}_I(\xi)\| \;\mathrm{d}\xi= d_N(\gamma(s),\gamma(s')),
\end{align*}
which is a contradiction.
\end{proof}

\section{Notions of strict convexity}\label{s:sc}


We begin this section by reviewing the notion of strict convexity in normed vector spaces. Strictly convex norms can be characterized in many different ways, for instance through the shape of spheres or of geodesics in the space.  There exist natural counterparts of these properties for Heisenberg groups with a homogeneous left-invariant distance, which we introduce in Section \ref{s:StrictlyConvexGroups}. We show later in Section \ref{s:exNorms} that in this setting the properties cease to be all equivalent.

\subsection{Strictly convex norms on vector spaces}\label{s:StrictlyConvexNormed}

Strictly convex normed vector spaces play an important role as a class of spaces which are more flexible than inner product spaces, and still have better properties than arbitrary normed spaces. Various equivalent definitions of strict convexity for normed spaces are used concurrently in the literature. Proposition 7.2.1 in \cite{papadopoulos2005metric}, for instance, lists as many as nine different characterizations. We put our focus here on those three properties for which we will later formulate counterparts in the Heisenberg group.

\begin{proposition}\label{p:strict_convex_normed}
The following properties of a normed vector space $(V,\|\cdot\|)$ are equivalent:
\begin{enumerate}
\item\label{i:SC} \emph{strict convexity of the norm}:\\ if $v,w\in V\setminus \{0\}$ are such that $\|v+w\|=\|v\|+ \|w\|$, then $v=\lambda w$ for some $\lambda>0$,
\item\label{i:MP} \emph{midpoint property}:\\if $v,v_1,v_2\in V$ are such that $\|v_1-v\|=\|v_2-v\|=\frac{1}{2}\|v_1-v_2\|$, then $v=\frac{v_1+v_2}{2}$,
\item\label{i:GLP} \emph{geodesic linearity property}:\\ every infinite geodesic in $(V,\|\cdot\|)$ is a line in $V$.
\end{enumerate}
\end{proposition}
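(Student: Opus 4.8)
The plan is to prove the three implications \emph{strict convexity} $\Rightarrow$ \emph{midpoint property} $\Rightarrow$ \emph{geodesic linearity property} $\Rightarrow$ \emph{strict convexity}, which together yield the equivalence. For the first implication, suppose $\|v_1-v\|=\|v_2-v\|=\tfrac12\|v_1-v_2\|$ and set $a:=v_1-v$, $b:=v-v_2$. Since $a+b=v_1-v_2$, the triangle inequality combined with the hypothesis forces the equality $\|a+b\|=\|a\|+\|b\|$. If $a,b\neq 0$, strict convexity gives $a=\lambda b$ with $\lambda>0$; as $\|a\|=\|b\|=\tfrac12\|v_1-v_2\|$ we must have $\lambda=1$, hence $v_1-v=v-v_2$ and $v=\tfrac{v_1+v_2}{2}$. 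The degenerate cases $a=0$ or $b=0$ are immediate.

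For the second implication, let $\gamma:\mathbb{R}\to V$ be an infinite geodesic. Applying the midpoint property to $v=\gamma(\tfrac{s+s'}{2})$, $v_1=\gamma(s)$, $v_2=\gamma(s')$, whose mutual distances satisfy the required relations precisely because $\gamma$ is isometric, yields the midpoint identity $\gamma(\tfrac{s+s'}{2})=\tfrac12(\gamma(s)+\gamma(s'))$ for all $s,s'$. Since $\gamma$ is continuous (being an isometry, hence $1$-Lipschitz), the standard argument for midpoint-affine functions applies: setting $g(s):=\gamma(s)-\gamma(0)$ one deduces $g(s/2)=\tfrac12 g(s)$ and then Cauchy additivity $g(s+s')=g(s)+g(s')$, so by continuity $g(s)=s\,u$ with $u=\gamma(1)-\gamma(0)$. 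Thus $\gamma(s)=\gamma(0)+s\,u$ is a line, and $\|u\|=1$ since $\gamma$ is unit speed.

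The substance of the proof lies in the third implication, which I would establish by contraposition: assuming strict convexity fails, I construct a non-linear infinite geodesic. Choose $v,w\neq 0$ with $\|v+w\|=\|v\|+\|w\|$ but $v\neq\lambda w$ for every $\lambda>0$. The key lemma is to upgrade this single equality to the cone identity $\|\alpha v+\beta w\|=\alpha\|v\|+\beta\|w\|$ for all $\alpha,\beta\geq 0$. I would deduce it from the convexity of $t\mapsto\|v+tw\|$: this function lies below the affine map $t\mapsto\|v\|+t\|w\|$ on $[0,\infty)$ by the triangle inequality and agrees with it at $t=0$ and $t=1$, and a convex function below an affine one that meets it at two points must coincide with it throughout $[0,\infty)$; homogeneity then gives the full cone identity. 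With this in hand I define $\gamma(s)=\tfrac{s}{\|v\|}v$ for $s\leq\|v\|$ and $\gamma(s)=v+\tfrac{s-\|v\|}{\|w\|}w$ for $s\geq\|v\|$, and the cone identity shows directly that $\|\gamma(s)-\gamma(s')\|=|s-s'|$ for all $s,s'\in\mathbb{R}$ (alternatively, one checks on each segment that $L(\gamma)$ equals the distance between endpoints and that $\gamma$ is parametrized by arc length, invoking Lemma \ref{l:geodChar}). Hence $\gamma$ is an infinite geodesic; since $v$ and $w$ are not positively parallel, the two rays point in genuinely different directions and $\gamma$ is not a line, contradicting the geodesic linearity property.

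The step I expect to be the main obstacle is this last implication. Both the cone upgrade and the verification that the bent concatenation is a genuine \emph{infinite} geodesic (rather than merely a finite geodesic segment that might fail to be isometric across the corner) require the sharpened identity rather than the bare hypothesis; in addition one should record the short computation ruling out that $w$ is a \emph{negative} multiple of $v$, which guarantees that $v$ and $w$ are linearly independent and hence that the constructed path truly bends.
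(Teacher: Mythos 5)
Your proposal is correct and, for the implication the paper actually proves in detail --- \eqref{i:GLP} $\Rightarrow$ \eqref{i:SC} --- it takes essentially the same route: upgrade the single equality $\|v+w\|=\|v\|+\|w\|$ to the cone identity $\|\alpha v+\beta w\|=\alpha\|v\|+\beta\|w\|$ for $\alpha,\beta\geq 0$ (you via convexity of $t\mapsto\|v+tw\|$, the paper via a direct chain of triangle inequalities) and use it to exhibit a bent infinite geodesic, which forces $v$ and $w$ to be positively parallel. The remaining implications \eqref{i:SC} $\Rightarrow$ \eqref{i:MP} $\Rightarrow$ \eqref{i:GLP}, which the paper delegates to the literature, are supplied correctly by your standard equality-case and midpoint-affinity arguments.
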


Equivalent characterizations of strict convexity as in Proposition \ref{p:strict_convex_normed} are well know; see for instance \cite[(i)]{vai:mazur-ulam} for the equivalence between \eqref{i:SC} and \eqref{i:MP}, and \cite{papadopoulos2005metric} for the fact that strict convexity is equivalent to $(V,\|\cdot\|)$ being \emph{uniquely geodesic}, which in turn implies \eqref{i:GLP}. For the convenience of the reader we include a proof for the fact that \eqref{i:GLP} implies \eqref{i:SC}.

\begin{proof}[Proof of  \eqref{i:GLP} $\Rightarrow$ \eqref{i:SC}] It turns out that the linearity of infinite geodesics is sufficient to establish strict convexity. To see this, consider arbitrary $v,w\in V\setminus \{0\}$ with the property that $\|v+w\|=\|v\|+ \|w\|$. Using these particular points, we construct an infinite geodesic, namely $\gamma:\mathbb{R}\to V$, defined by
\begin{displaymath}
\gamma(s):= \left\{\begin{array}{ll}\frac{v}{\|v\|}s,&s\in (-\infty,0]\\
\frac{w}{\|w\|}s,&s\in (0,+\infty).
\end{array} \right.
\end{displaymath}
It is clear that $\gamma$ restricted to $(-\infty,0]$ and $(0,+\infty)$ is geodesic. In order to verify that $\gamma$ is globally geodesic, we first show that if $\|v+w\|= \|v\|+\|w\|$, then there are in fact plenty of points with this property. Indeed, for arbitrary $a,b\in (0,1)$, we find
\begin{align*}
\|v\|+\|w\|&= \|v+w\|\\
&\leq \|a v + bw\| + (1-b)\|w\| + (1-a) \|v\|\\
&\leq a\|v\| + b\|w\| + (1-b)\|w\| + (1-a) \|v\|\\
&= \|v\|+ \|w\|.
\end{align*}
This shows that in every step of the above chain of estimates equality must be realized and thus
\begin{displaymath}
\|av + b w\| = a\|v\| + b \|w\|,\quad \text{for all }a,b\in [0,1].
\end{displaymath}
By scalar multiplication we deduce that the same identity holds for all $a,b\geq 0$.  This can be employed to prove that $\gamma$ is a global geodesic. To this end, it suffices to observe for $s\in (-\infty,0]$ and $s'\in (0,+\infty)$ that
\begin{align*}
\|\gamma(s')-\gamma(s)\| &= \left\|\frac{w}{\|w\|}s' + \frac{v}{\|v\|}(-s) \right\|= s'-s
= |s'-s|,
\end{align*}
Since $\gamma$ is therefore an infinite geodesic with $\gamma(0)=0$, it follows by the geodesic linearity property that $\gamma$ must be of the form $\gamma(s)= us$, $s\in \mathbb{R}$, for a vector $u\in V$ with $\|u\|=1$. We conclude that
 $v= (\|v\|/\|w\|)w$. The same argument applies to all such pairs of points $v$ and $w$, which shows that $(V,\|\cdot\|)$ is strictly convex. This concludes the proof of the proposition.
\end{proof}

Strictly convex norms have found many applications, some of which are listed for instance in
\cite{zbMATH03556688}. The most relevant result in the context of the present paper is the following.

\begin{thm}
Assume that $(V,\|\cdot\|_V)$ and $(W,\|\cdot\|_W)$ are two $\mathbb{R}$ vector spaces. If $\|\cdot\|_W$ is strictly convex, then every isometric embedding $f:(V,\|\cdot\|_V) \to (W,\|\cdot\|_W)$ is affine.
\end{thm}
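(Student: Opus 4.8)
The plan is to prove that an isometric embedding $f:(V,\|\cdot\|_V)\to (W,\|\cdot\|_W)$ into a strictly convex space is affine, using the classical Mazur--Ulam strategy adapted to the non-surjective setting. The key leverage is that strict convexity of $\|\cdot\|_W$ forces \emph{uniqueness of midpoints} in $W$: by the equivalence of \eqref{i:SC} and \eqref{i:MP} in Proposition \ref{p:strict_convex_normed}, if $w_1,w_2\in W$ and some point $w$ satisfies $\|w_1-w\|_W=\|w_2-w\|_W=\tfrac{1}{2}\|w_1-w_2\|_W$, then necessarily $w=\tfrac{w_1+w_2}{2}$. This is exactly the rigidity that fails in a general normed target and is what will let the argument go through without surjectivity.

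The core step I would carry out is to show that $f$ preserves midpoints: for all $v_1,v_2\in V$,
\begin{equation*}
f\!\left(\tfrac{v_1+v_2}{2}\right) = \tfrac{f(v_1)+f(v_2)}{2}.
\end{equation*}
To see this, set $v:=\tfrac{v_1+v_2}{2}$ and observe that since $f$ is an isometry,
\begin{displaymath}
\|f(v)-f(v_1)\|_W = \|v-v_1\|_V = \tfrac{1}{2}\|v_1-v_2\|_V = \tfrac{1}{2}\|f(v_1)-f(v_2)\|_W,
\end{displaymath}
and likewise $\|f(v)-f(v_2)\|_W = \tfrac{1}{2}\|f(v_1)-f(v_2)\|_W$. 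Thus $f(v)$ is a point equidistant from $f(v_1)$ and $f(v_2)$ at exactly half their separation, so the midpoint property \eqref{i:MP} in the strictly convex space $W$ forces $f(v)=\tfrac{f(v_1)+f(v_2)}{2}$, as claimed. Note this is precisely the place where strict convexity of the \emph{target} is indispensable; no assumption on $V$ is needed.

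From midpoint preservation, affineness follows by a standard bootstrapping. Define $g:=f-f(0)$, so $g(0)=0$ and $g$ still preserves midpoints. Midpoint preservation together with $g(0)=0$ gives $g\!\left(\tfrac{v}{2}\right)=\tfrac{1}{2}g(v)$, hence $g\!\left(\tfrac{v_1+v_2}{2}\right)=\tfrac{1}{2}(g(v_1)+g(v_2))$ yields $g(v_1+v_2)=g(v_1)+g(v_2)$, i.e.\ additivity. Additivity gives $\mathbb{Q}$-homogeneity in the usual way, and continuity of $g$ (which is immediate, as isometries are continuous) upgrades this to $\mathbb{R}$-homogeneity. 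Therefore $g$ is linear and $f=g+f(0)$ is affine. The main obstacle in this argument is entirely concentrated in the midpoint-preservation step: without surjectivity one cannot invoke the classical Mazur--Ulam reflection trick, and it is strict convexity of the target, via the uniqueness of midpoints, that supplies the needed rigidity; the subsequent passage from additivity to linearity is routine.
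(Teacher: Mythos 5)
Your proposal is correct and follows essentially the same route as the paper, which also derives the result from the midpoint property of the strictly convex target (Proposition \ref{p:strict_convex_normed}, \eqref{i:MP}) combined with the fact that a continuous midpoint-preserving map fixing the origin is linear (Lemma \ref{l:linearity}). Your write-up simply fills in the details that the paper delegates to the cited reference and that lemma.
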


As explained in \cite{vai:mazur-ulam}, this theorem follows from the midpoint property of strictly convex norms together with the fact that a continuous map $f:(V,\|\cdot\|_V) \to (W,\|\cdot\|_W)$ is affine if it preserves midpoints of line segments, see Lemma
\ref{l:linearity} below.

\subsection{Strictly convex norms on Heisenberg groups}\label{s:StrictlyConvexGroups}

\subsubsection{Notions of strict convexity}\label{ss:NotionsOfStrictConvexity}
We saw in Section \ref{s:StrictlyConvexNormed} that strict convexity in normed real vector spaces has different equivalent formulations. One of these formulations, defined as \emph{geodesic linearity property} in Proposition \ref{p:strict_convex_normed}, \eqref{i:GLP}, can be generalized: we say that any real linear space equipped with a metric has the \emph{geodesic linearity property}, if every infinite geodesic is a line. When the metric is induced by a norm, then this property is equivalent to strict convexity.


Our aim is to relate the geodesic linearity property for the Heisenberg group equipped with a homogeneous distance to \emph{strict convexity} and \emph{midpoint property}, defined in an intuitively analogous way.



When we speak about lines in the Heisenberg group $\mathbb{H}^n$, we mean lines in the underlying vector space $\mathbb{R}^{2n+1}$.
Straight lines $l(s)=p_0+sv,v\neq0,$ in $\mathbb{R}^{2n+1}$ can be also written as $l(s)=p_0\ast (sz,st)$ for an appropriate $(z,t)\in\mathbb{H}^{n}\backslash\{0\}$. We call $l$ \emph{horizontal} if $t=0$, and \emph{non-horizontal} if $t\neq0$. From now on, we use the word \emph{line} to talk about the curve, or its image.

\begin{proposition}\label{p:horiz_line}
 Let $l:\mathbb{R}\to\mathbb{H}^{n},\ s\to p_0 *(sz,st)$, be a straight line and $N$ a homogeneous norm on $\mathbb{H}^{n}$. If $l$ is horizontal, then it can be reparameterized to be an infinite geodesic on the metric space $(\mathbb{H}^{n},d_N)$. If $l$ is non-horizontal, the segment determined by any two different points of its image is not rectifiable.
  \end{proposition}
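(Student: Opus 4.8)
The plan is to treat the two cases separately, reducing each to a computation with the group law and the projected norm $\|\cdot\|$ of Proposition \ref{p:ProjNormIsNorm}; throughout I write $p_0=(z_0,t_0)$. In the horizontal case ($t=0$, so necessarily $z\neq 0$) I would first use left-invariance to eliminate $p_0$ and then compute the product directly: since
\begin{displaymath}
(sz,0)^{-1}\ast (s'z,0)=\big((s'-s)z,\,2\omega_n(-sz,s'z)\big)=\big((s'-s)z,0\big),
\end{displaymath}
where the second equality uses skew-symmetry ($\omega_n(z,z)=0$), one obtains $d_N(l(s),l(s'))=N\big(((s'-s)z,0)\big)=|s'-s|\,\|z\|$ by homogeneity of $\|\cdot\|$. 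As $\|z\|>0$, the reparametrization $u\mapsto l(u/\|z\|)$ is defined on all of $\mathbb{R}$ and satisfies $d_N=|u-u'|$, hence is an infinite geodesic.

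For the non-horizontal case ($t\neq 0$), expanding the product gives
\begin{displaymath}
l(s)=\big(z_0+sz,\;t_0+s(t+2\omega_n(z_0,z))\big),
\end{displaymath}
so the image is a genuine straight segment in $\mathbb{R}^{2n+1}$ and $l$ is injective. I would argue by contradiction: if some nondegenerate subsegment were rectifiable, then by the result cited just after \eqref{eq:horiz_comp} it would admit a $1$-Lipschitz horizontal parametrization $\gamma$. Because $d_N$ and the Euclidean distance are locally Lipschitz comparable, $\gamma$ is Euclidean absolutely continuous, and since its image lies on the injective line $l$ one can solve for an absolutely continuous $\phi$ with $\gamma=l\circ\phi$ (reading off $\phi$ from a coordinate in which $l$ is nonconstant). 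Writing $\gamma_I=z_0+\phi z$ and $\gamma_{2n+1}=t_0+\phi\,(t+2\omega_n(z_0,z))$ and substituting into the horizontality relation \eqref{eq:horiz_comp}, the contributions quadratic in $\phi$ coming from the horizontal part cancel, and the identity collapses to
\begin{displaymath}
\dot\phi(s)\,t=0\quad\text{for a.e. }s.
\end{displaymath}
Since $t\neq 0$, this forces $\dot\phi=0$ a.e., so $\phi$ is constant and $\gamma$ degenerates to a point, contradicting that the two endpoints of the subsegment are distinct.

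The only delicate point I anticipate is the regularity bookkeeping in the second case: justifying that a rectifiable reparametrization is necessarily of the form $l\circ\phi$ with $\phi$ absolutely continuous, so that inserting it into \eqref{eq:horiz_comp} is legitimate. Once that is in place, the algebraic core — that the horizontal part contributes exactly $2\,\dot\phi\,\omega_n(z_0,z)$, cancelling the corresponding term in the vertical drift and leaving the obstruction $t\,\dot\phi=0$ — is a routine verification.
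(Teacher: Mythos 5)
Your argument is correct, and it follows exactly the route the paper indicates: the paper gives no written proof, only a pointer to Pansu's result on rectifiable curves and the discussion around \eqref{eq:horiz_comp}, and your proposal is a sound, complete realization of that route (left-invariance plus $\omega_n(z,z)=0$ for the horizontal case; arc-length reparametrization, horizontality of rectifiable curves, and the cancellation leaving $t\,\dot\phi=0$ for the non-horizontal case). The regularity point you flag is handled correctly, since the $1$-Lipschitz parametrization is Euclidean Lipschitz and $\phi$ is recovered from an affine coordinate of the injective line, so it is absolutely continuous and the a.e.\ chain rule applies.
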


  Proposition \ref{p:horiz_line} is well known and follows from more general results about rectifiable curves (see for example \cite{MR979599} and the discussion around \eqref{eq:horiz_comp}).

 \begin{definition}
Let $N$ be an homogeneous norm on $\mathbb{H}^n$. We say that $N$ is \emph{horizontally strictly convex} if for all $p,p'\neq e$ it holds
\begin{align}
&N(p*p')=N(p)+N(p')\Rightarrow p,p' \text{ lie on a horizontal line through the origin, i.e.,}\nonumber\\
 &\exists z\in\mathbb{R}^{2n}\backslash\{0\},s,s'\in\mathbb{R},\ \text{such that}\ p=(sz,0)\ \text{and}\ p'=(s'z,0).\nonumber
\end{align}
\end{definition}
\begin{lemma}
The following two conditions for a homogeneous norm $N$ on $\mathbb{H}^{n}$ are equivalent:
\begin{align}
&(1)\ N\ \text{is horizontally strictly convex},\nonumber\\
&(2)\ \text{for all}\ p_1,p_2,p\in\mathbb{H}^{n}, p_1\neq p, p_2\neq p,\ \text{with}\ d_N (p_1,p_2)=d_N (p_1,p)+d_N (p,p_2),\nonumber\\
&\text{the points}\ p_1,p_2\ \text{belong to the horizontal line}\ l:=\{p*(sz,0):\;s\in\mathbb{R}\}\ \text{for some}\ z\in\mathbb{R}^{2n}\backslash\{0\}.\nonumber
\end{align}
\end{lemma}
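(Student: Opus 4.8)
The plan is to exploit the left-invariance of $d_N$ together with the dictionary $d_N(g,g')=N(g^{-1}\ast g')$ and the norm symmetry $N(g^{-1})=N(g)$ in order to translate the betweenness condition in $(2)$ into the additivity condition defining horizontal strict convexity, and conversely. The guiding observation is that, by left-invariance, there is no loss of generality in placing the middle point $p$ at the neutral element $e$: left-translating all points by $p^{-1}$ sends the horizontal line $\{p\ast(sz,0):s\in\mathbb{R}\}$ to the horizontal line through the origin $\{(sz,0):s\in\mathbb{R}\}$, so both the hypothesis and the conclusion of $(2)$ are covariant under left translation. Thus condition $(2)$ is really just its own special case at $p=e$, which is exactly (up to inversion identities) the additivity statement defining horizontal strict convexity.

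For the implication $(1)\Rightarrow(2)$, I would take $p_1,p_2,p$ as in $(2)$ and set $q_i:=p^{-1}\ast p_i$. Left-invariance gives $d_N(p_1,p)=N(q_1)$, $d_N(p,p_2)=N(q_2)$, and $d_N(p_1,p_2)=N(q_1^{-1}\ast q_2)$, so the betweenness hypothesis reads $N(q_1^{-1}\ast q_2)=N(q_1)+N(q_2)$. Putting $a:=q_1^{-1}$ and $b:=q_2$ and using $N(a)=N(q_1)$, this is precisely $N(a\ast b)=N(a)+N(b)$ with $a,b\neq e$ (since $p_1\neq p$ and $p_2\neq p$). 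Horizontal strict convexity then supplies $z\in\mathbb{R}^{2n}\setminus\{0\}$ and scalars with $a=(sz,0)$ and $b=(s'z,0)$; hence $q_1=a^{-1}=(-sz,0)$ and $q_2=(s'z,0)$, so $p_i=p\ast q_i$ both lie on $\{p\ast(tz,0):t\in\mathbb{R}\}$, which is the desired horizontal line through $p$.

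For the converse $(2)\Rightarrow(1)$, I would take $p,p'\neq e$ with $N(p\ast p')=N(p)+N(p')$ and apply $(2)$ to the triple $p_1:=p^{-1}$, $p_2:=p'$, with middle point $e$. Indeed $d_N(p_1,e)=N(p_1)=N(p)$, $d_N(e,p_2)=N(p')$, and $d_N(p_1,p_2)=N(p_1^{-1}\ast p_2)=N(p\ast p')$, so the betweenness hypothesis of $(2)$ is satisfied, and $p_1\neq e$, $p_2\neq e$ since $p,p'\neq e$. Condition $(2)$ then yields $z\in\mathbb{R}^{2n}\setminus\{0\}$ with $p_1,p_2\in\{(sz,0):s\in\mathbb{R}\}$; writing $p_1=(s_1z,0)$ and $p_2=(s_2z,0)$ gives $p=p_1^{-1}=(-s_1z,0)$ and $p'=(s_2z,0)$, which is exactly horizontal strict convexity.

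I expect no deep difficulty here: the argument is entirely bookkeeping with left-invariance and the inversion identities. The only points that require genuine care are, first, matching the inverse correctly in the substitution $a=q_1^{-1}$ so that the translated betweenness condition comes out in precisely the form $N(a\ast b)=N(a)+N(b)$; and second, tracking the nondegeneracy conditions, namely that $p_1\neq p$ and $p_2\neq p$ correspond exactly to $a,b\neq e$ in one direction and that $p,p'\neq e$ yield $p_1,p_2\neq e$ in the other, so that the hypothesis of whichever condition is being invoked is legitimately met.
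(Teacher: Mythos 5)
Your proof is correct and follows essentially the same route as the paper's: both directions reduce, via left-invariance and the inversion identity $N(g^{-1})=N(g)$, to translating the middle point to the origin, and your substitutions $a=q_1^{-1}$, $b=q_2$ (respectively $p_1=p^{-1}$, $p_2=p'$ with middle point $e$) match the paper's choice of $q,q'$ up to relabeling. The nondegeneracy bookkeeping is also handled the same way, so there is nothing to add.
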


\begin{proof}
First, we prove the implication $(1)\Rightarrow(2)$. For this, consider $p_1,p_2,p\in\mathbb{H}^{n}$, with $p_1$ and $p_2$ both distinct from $p$, satisfying
\begin{align}
 d_N (p_1,p_2)=d_N (p_1,p)+d_N (p,p_2).\nonumber
 \end{align}
 Defining $q:=(p^{-1}*p_2)^{-1}$ and $q':=p^{-1}*p_1$, it follows
 \begin{align*}
 N(q*q')=N((p_2)^{-1}*p_1)&=d_N (p_1,p_2)=d_N (p_1,p)+d_N (p,p_2)\\
 &=N(p^{-1}*p_1)+N(p^{-1}*p_2)=N(q')+N(q).
  \end{align*}
  Since $q\neq e$ and $q'\neq e$, the horizontally strictly convexity of $N$ implies that there exist $z\in\mathbb{R}^{2n}\backslash\{0\}$ and $s,s'\in\mathbb{R}$ such that $q=(sz,0)$ and $q'=(s'z,0)$. This implies $p_1=p*q'=p*(s'z,0)$ and $p_2=p*q^{-1}=p*(-sz,0)$, as desired.\newline\newline
  Now, for proving the implication $(2)\Rightarrow(1)$, consider $p,p'\in\mathbb{H}^{n}$, $p,p'\neq e$, that satisfy
  \begin{align}
  N(p*p')=N(p)+N(p').\nonumber
  \end{align}
  This implies
  \begin{align}
  d_N (p',p^{-1})=N(p*p')=N(p)+N(p')=d_N (e,p^{-1})+d_N (p',e).\nonumber
    \end{align}
  Since $p'\neq e$ and $p^{-1}\neq e$, by assumption there exist $z\in\mathbb{R}^{2n}\backslash\{0\}$ and $s,s'\in\mathbb{R}$ such that $p'=e(s'z,0)=(s'z,0)$ and $p=(p^{-1})^{-1}=(e(sz,0))^{-1}=(sz,0)^{-1}=(-sz,0)$, as desired.
  \end{proof}
 \begin{definition}[Midpoint property]
  Let $N$ be a homogeneous norm on $\mathbb{H}^n$. We say that $N$ has the midpoint property, if for all $p_1,p_2,q\in\mathbb{H}^n$ it holds
  \begin{align}
  d_N(p_1,p_2)=2d_N(p_1,q)=2d_N(p_2,q)\Rightarrow q=\frac{p_1+p_2}{2}.\nonumber
  \end{align}
  \end{definition}

  This notion is motivated by studying the behavior of homogeneous norms along horizontal lines.
  In the above definition, scalar multiplication and addition in the expression for $q$ are understood via the identification of $\mathbb{H}^n$ with $\mathbb{R}^{2n+1}$. The midpoint property is equivalent to the following metric condition:
  \begin{displaymath}
  d_N(p,p^{-1})=2 d_N(p,q)= 2 d_N(p^{-1},q)\quad\Rightarrow\quad q=e.
  \end{displaymath}

  \begin{proposition}
  Let $N$ be a horizontally strictly convex homogeneous norm on $\mathbb{H}^n$. Then $N$ has the midpoint property.
   \end{proposition}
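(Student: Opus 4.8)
The plan is to verify the equivalent metric reformulation of the midpoint property recorded just above the statement, namely that
\[
d_N(p,p^{-1})=2d_N(p,q)=2d_N(p^{-1},q)\quad\Longrightarrow\quad q=e.
\]
So I fix $p,q\in\mathbb{H}^n$ satisfying the left-hand condition and aim to show $q=e$. The case $p=e$ is immediate, since then $d_N(p,p^{-1})=0$ forces $d_N(p,q)=0$ and hence $q=p=e$. From now on I assume $p\neq e$, so that $N(p)>0$.

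First I would simplify the left-hand quantity. Writing $p=(a,c)$, one has $p^{-1}\ast p^{-1}=(-2a,-2c)=\delta_2(p^{-1})$, because $\omega_n(a,a)=0$ by skew-symmetry, so homogeneity gives $d_N(p,p^{-1})=N(p^{-1}\ast p^{-1})=2N(p)$. Thus the hypothesis becomes $N(p)=d_N(p,q)=d_N(p^{-1},q)$, and in particular the triangle equality
\[
d_N(p,p^{-1})=d_N(p,q)+d_N(q,p^{-1})
\]
holds, with $q$ distinct from both $p$ and $p^{-1}$ (since $N(p)>0$). This is precisely the hypothesis of the metric characterization of horizontal strict convexity (condition $(2)$ of the preceding Lemma), applied to the endpoints $p,p^{-1}$ and the intermediate point $q$. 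It therefore yields a direction $z\in\mathbb{R}^{2n}\setminus\{0\}$ and scalars $s_1,s_2$ with $p=q\ast(s_1z,0)$ and $p^{-1}=q\ast(s_2z,0)$.

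Next I would compute the three relevant distances along this horizontal line. Using $q^{-1}\ast p=(s_1z,0)$, $q^{-1}\ast p^{-1}=(s_2z,0)$, and $p^{-1}\ast p^{-1}=((s_2-s_1)z,0)$ together with the identification $N((sz,0))=\|sz\|=|s|\,\|z\|$, the equalities $N(p)=d_N(p,q)=d_N(p^{-1},q)$ and $d_N(p,p^{-1})=2N(p)$ reduce to $|s_1|=|s_2|$ and $2|s_1|=|s_2-s_1|$. Since $s_1\neq 0$ (as $N(p)>0$), the option $s_2=s_1$ is excluded, and these relations force $s_2=-s_1$. Finally I read off $q=e$ from the line representation: writing $q=(w,\tau)$, the horizontal components of $p=q\ast(s_1z,0)$ and $p^{-1}=q\ast(-s_1z,0)$ are $w+s_1z$ and $w-s_1z$; as the underlying points are Euclidean negatives of one another, adding these gives $2w=0$, so $w=0$. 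Then $q=(0,\tau)$, and comparing vertical components gives $\tau=-\tau$, whence $\tau=0$ and $q=e$.

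I expect the only genuinely delicate part to be the bookkeeping in the last two paragraphs: invoking condition $(2)$ with the correct roles of endpoints and intermediate point, and then extracting $q=e$ from the horizontal-line representation. All of the computations hinge on the skew-symmetry $\omega_n(\cdot,\cdot)$ vanishing on the diagonal, which is what makes the group products along the line collapse to the Euclidean picture; carefully distinguishing the group inverse $p^{-1}$ from Euclidean negation is the main place where a sign error could creep in.
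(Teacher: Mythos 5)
Your overall strategy is exactly the paper's: obtain the triangle equality, invoke condition $(2)$ of the lemma characterizing horizontal strict convexity to place the two endpoints on a horizontal line through $q$, compute the three distances along that line to force $s_2=-s_1$, and read off the midpoint. (The paper works with general $p_1,p_2$ rather than normalizing to the pair $p,p^{-1}$, but the computations are the same.) However, your opening simplification contains a genuine error: $p^{-1}\ast p^{-1}=(-2a,-2c)$ is \emph{not} $\delta_2(p^{-1})$, because the Heisenberg dilation scales the vertical coordinate quadratically, so $\delta_2((-a,-c))=(-2a,-4c)$; the two agree only when $c=0$. Consequently the identity $d_N(p,p^{-1})=2N(p)$ is false in general: for the Kor\'anyi norm and $p=(0,1)$ one has $d_{N_K}(p,p^{-1})=N_K((0,-2))=\sqrt{2}$, while $2N_K(p)=2$.

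Fortunately this false identity is not load-bearing, and the rest of your argument survives without it. The triangle equality $d_N(p,p^{-1})=d_N(p,q)+d_N(q,p^{-1})$ follows directly from the hypothesis $d_N(p,p^{-1})=2d_N(p,q)=2d_N(p^{-1},q)$, with no need to evaluate $d_N(p,p^{-1})$. The relations $|s_1|=|s_2|$ and $|s_2-s_1|=2|s_1|$ likewise follow from the hypothesis once the line representation is in hand, since you only ever compare the three distances to one another, never to $N(p)$. Finally, the nondegeneracy $q\neq p$ and $q\neq p^{-1}$ (needed both to apply condition $(2)$ and to get $s_1\neq 0$) should be argued directly rather than via $d_N(p,q)=N(p)>0$: if $q=p$ then the hypothesis forces $d_N(p,p^{-1})=0$, hence $p=p^{-1}$, hence $p=e$, contradicting your standing assumption, and symmetrically for $q=p^{-1}$. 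With these three substitutions the proof is correct; the final extraction of $q=e$ from the fact that $w+s_1z$ and $w-s_1z$ are Euclidean negatives of one another is fine.
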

     \begin{proof}
   Consider points $q,p_1,p_2\in\mathbb{H}^{n}$ with $d_N (p_1,p_2)=2d_N (p_1,q)=2d_N(p_2,q)$. Assuming without loss of generality that $q\neq p_i$ for $i=1,2$, we have
   \begin{align}
   d_N (p_1,p_2)\leq d_N(p_1,q)+d_N(q,p_2)=\tfrac{1}{2}d_N (p_1,p_2)+\tfrac{1}{2}d_N (p_1,p_2)=d_N (p_1,p_2).\nonumber\\
      \end{align}
      Since $N$ is horizontally strictly convex, there exist $z\in\mathbb{R}^{2n}\backslash\{0\}$ and $s_1,s_2\in\mathbb{R}$, such that $p_1=q*(s_1z,0)$ and $p_2=q*(s_2z,0)$. From this, we get
      \begin{align}
      &d_N (p_i,q)=N(q^{-1}*p_i)=N((s_i z,0))=|s_i|N(z,0),\nonumber\\
      &d_N(p_1,p_2)=d_N((s_1z,0),(s_2z,0))=N((-s_1z,0)*(s_2 z,0))=N((s_2-s_1)z,0)\nonumber\\
      &=|s_2-s_1|N(z,0).
      \end{align}
      Since $z\neq0$, $N(z,0)$ is a strictly positve number. From the last calculations, we obtain
      \begin{align}
      &d_N (p_1,p_2)=2d_N (q,p_1)=2d_N(q,p_2)\Leftrightarrow |s_2-s_1|N(z,0)=2|s_1|N(z,0)=2|s_2|N(z,0)\nonumber\\
      &\Leftrightarrow |s_2-s_1|=2|s_1|=2|s_2|.
            \end{align}
            The reader can convince her- or himself that the last equality implies $s_2=-s_1$.
                Finally, writing $q=(z_0,t_0)$ we get
                \begin{align}
                &\frac{p_1+p_2}{2}=\tfrac{1}{2}\left((z_0+s_1z,t_0+s_1 2\omega_n(z_0,z))+(z_0-s_1z,t_0-s_1 2\omega_n(z_0,z))\right)=\tfrac{1}{2}(2z_0,2t_0)=q.\nonumber
                \end{align}
                \end{proof}
 \begin{definition}
       Let $N$ be a homogeneous norm on $\mathbb{H}^n$. We say that $N$ has the \emph{geodesic linearity property} if every infinite geodesic is a horizontal line, that is,  if for every map $\gamma:\mathbb{R}\to\mathbb{H}^{n}$ with $d_{N}(\gamma(s_1),\gamma(s_2))=|s_1-s_2|$, for all $s_1,s_2\in\mathbb{R}$, there exists $z_0\in\mathbb{R}^{2n}\backslash\{0\}$ such that $\gamma(s)=\gamma(0)\ast (sz_0,0)$, for all $s\in\mathbb{R}$.
       \end{definition}

 \begin{ex}
 Let $d_{SR}$ be the standard sub-Riemannian distance on $\mathbb{H}^n$, that is, the sub-Finsler distance generated by the Euclidean norm $\|\cdot\|_2 = \sqrt{\langle \cdot,\cdot\rangle}$. The space $(\mathbb{H}^n,d_{SR})$ has the geodesic linearity property, but there exist finite geodesics which are not horizontal line segments with respect to $d_{SR}$, see for instance \cite{MR3417082}. This is different from the situation in normed spaces. As explained below Proposition \ref{p:strict_convex_normed} the geodesic linearity property of a normed space is equivalent to the fact that all geodesics are linear, not only the infinite ones.
 \end{ex}

       \begin{remark}
       The last definition is equivalent to ``every geodesic in $(\mathbb{H}^n, d_N)$ is a straight line'' since from Proposition \ref{p:horiz_line}  we know that a straight line can be reparameterized to be a geodesic if and only if it is horizontal.
       \end{remark}
In the following, we will see that the geodesic linearity property is implied by the previously discussed properties. The proof is basically an application of the next lemma, which appears for instance in
 \cite[(2)]{vai:mazur-ulam}.
\begin{lemma}\label{l:linearity}
Let $(V,\|\cdot \|_V)$ and $(W,\|\cdot \|_W)$ be two real normed spaces and $g:(V,\|\cdot \|_V)\to(W,\|\cdot \|_W)$ a map fulfilling
\begin{align}
&i)\ g(0)=0,\nonumber\\
&ii)\ g\left(\frac{v_1+v_2}{2}\right)=\frac{g(v_1)+g(v_2)}{2},\ \forall v_1,v_2\in V,\nonumber\\
&iii)\ \text{g is continuous}.\nonumber
\end{align}
Then, $g$ is linear.
\end{lemma}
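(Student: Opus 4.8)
The plan is to recognize conditions $i)$ and $ii)$ as a form of Jensen's (equivalently Cauchy's) functional equation, whose \emph{continuous} solutions are classically known to be linear. Concretely, I would first extract additivity from the midpoint property, then upgrade this to $\mathbb{Q}$-homogeneity by a purely algebraic induction, and finally invoke continuity to pass from rational to real scalars.

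First I would establish the halving identity $g(v/2) = g(v)/2$ by substituting $v_2 = 0$ into $ii)$ and using $g(0)=0$ from $i)$, which gives $g(v_1/2) = (g(v_1)+g(0))/2 = g(v_1)/2$. Combining this with $ii)$ applied to an arbitrary pair $v_1,v_2$ yields
\[
\frac{g(v_1+v_2)}{2} = g\!\left(\frac{v_1+v_2}{2}\right) = \frac{g(v_1)+g(v_2)}{2},
\]
so that $g(v_1+v_2)=g(v_1)+g(v_2)$ for all $v_1,v_2\in V$; that is, $g$ is additive.

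From additivity, $\mathbb{Q}$-homogeneity follows by standard arguments. An induction gives $g(nv)=n\,g(v)$ for $n\in\mathbb{N}$, while $g(0)=0$ together with $g(v)+g(-v)=g(0)=0$ extends this to all $n\in\mathbb{Z}$. Writing $v=q\,(v/q)$ shows $g(v/q)=g(v)/q$ for $q\in\mathbb{N}$, and hence $g(rv)=r\,g(v)$ for every $r\in\mathbb{Q}$.

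The essential step, which I expect to be the only non-algebraic point and therefore the crux, is the passage from rational to real scalars: here continuity is indispensable, since without it one can use a Hamel basis to build additive maps that are not linear. For arbitrary $\lambda\in\mathbb{R}$ I would choose rationals $r_k\to\lambda$ and compute, using condition $iii)$ and the continuity of scalar multiplication in $V$,
\[
g(\lambda v) = \lim_{k\to\infty} g(r_k v) = \lim_{k\to\infty} r_k\,g(v) = \lambda\,g(v).
\]
Together with additivity, this $\mathbb{R}$-homogeneity shows that $g$ is linear, completing the proof.
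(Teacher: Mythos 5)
Your proof is correct. The paper does not actually supply its own argument for this lemma --- it simply cites V\"ais\"al\"a's note on the Mazur--Ulam theorem --- and your proof is precisely the classical one that reference relies on: the substitution $v_2=0$ gives the halving identity, midpoint preservation then upgrades to additivity, a routine induction yields $\mathbb{Q}$-homogeneity, and continuity closes the gap from rational to real scalars. All four steps are carried out correctly, and you rightly identify the last step as the only place where continuity (rather than pure algebra) is needed.
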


       \begin{proposition}
Let $N$ be a homogeneous norm on $\mathbb{H}^n$ having the midpoint property. Then, $N$ has the geodesic linearity property.
\end{proposition}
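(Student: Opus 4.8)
The plan is to transplant the normed-space argument (the proof of \eqref{i:GLP} $\Rightarrow$ \eqref{i:SC} combined with Lemma \ref{l:linearity}) to the Heisenberg setting: the midpoint property forces an infinite geodesic to preserve Euclidean midpoints, Lemma \ref{l:linearity} then upgrades this to linearity in $\mathbb{R}^{2n+1}$, and finally Proposition \ref{p:horiz_line} rules out the non-horizontal lines. First I would fix an arbitrary infinite geodesic $\gamma:\mathbb{R}\to(\mathbb{H}^n,d_N)$. Since $d_N$ is left-invariant, the curve $s\mapsto \gamma(0)^{-1}\ast\gamma(s)$ is again an infinite geodesic, so I may assume without loss of generality that $\gamma(0)=e$; the general case is recovered at the very end by reinstating the left-translation $L_{\gamma(0)}$. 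I would also record here that $\gamma$, being an isometric embedding, is $1$-Lipschitz with respect to $d_N$, and hence continuous as a map into $(\mathbb{R}^{2n+1},d_{eucl})$, because the identity map $(\mathbb{H}^n,d_N)\to(\mathbb{R}^{2n+1},d_{eucl})$ is locally Lipschitz as noted in Section \ref{ss:TheHeisenberggroup}.

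Next I would extract midpoint preservation. Fix $s_1\leq s_2$ and set $s=\tfrac{1}{2}(s_1+s_2)$. The geodesic identity gives $d_N(\gamma(s_1),\gamma(s_2))=s_2-s_1$ and $d_N(\gamma(s_1),\gamma(s))=d_N(\gamma(s),\gamma(s_2))=\tfrac{1}{2}(s_2-s_1)$, so that $d_N(\gamma(s_1),\gamma(s_2))=2d_N(\gamma(s_1),\gamma(s))=2d_N(\gamma(s),\gamma(s_2))$. Applying the midpoint property to the triple $(p_1,p_2,q)=(\gamma(s_1),\gamma(s_2),\gamma(s))$ yields $\gamma\left(\tfrac{s_1+s_2}{2}\right)=\tfrac{1}{2}(\gamma(s_1)+\gamma(s_2))$, where the sum is taken in $\mathbb{R}^{2n+1}$. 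As $s_1,s_2$ were arbitrary, $\gamma$ preserves Euclidean midpoints of all pairs of parameters.

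Finally I would view $\gamma$ as a map between the normed spaces $(\mathbb{R},|\cdot|)$ and $(\mathbb{R}^{2n+1},\|\cdot\|_{eucl})$. It satisfies $\gamma(0)=0$, preserves midpoints, and is continuous, so Lemma \ref{l:linearity} shows that $\gamma$ is linear: $\gamma(s)=s\,v$ for $v:=\gamma(1)\in\mathbb{R}^{2n+1}$, and $v\neq 0$ because $d_N(\gamma(0),\gamma(1))=1$. Thus $\gamma$ is a straight line through the origin, say $\gamma(s)=(sz_0,st_0)$ with $(z_0,t_0)=v$. Since the restriction $\gamma|_{[0,1]}$ is a finite geodesic, it is rectifiable (its length equals $d_N(\gamma(0),\gamma(1))=1$); by Proposition \ref{p:horiz_line} a non-horizontal line has non-rectifiable segments, so necessarily $t_0=0$ and $z_0\neq 0$. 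Hence $\gamma(s)=(sz_0,0)=\gamma(0)\ast(sz_0,0)$ in the normalized setting, and reinserting the left-translation gives $\gamma(s)=\gamma(0)\ast(sz_0,0)$ in general, which is exactly the geodesic linearity property.

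I expect the argument to be almost entirely formal; the two places requiring care are the verification that $\gamma$ is continuous into $\mathbb{R}^{2n+1}$ (needed to invoke Lemma \ref{l:linearity}, and supplied by the local Lipschitz continuity of the identity map) and the passage from ``$\gamma$ is a straight line'' to ``$\gamma$ is a \emph{horizontal} line'', which is exactly where Proposition \ref{p:horiz_line} does the essential work by excluding non-horizontal directions on rectifiability grounds.
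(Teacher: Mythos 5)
Your argument is correct and follows essentially the same route as the paper's proof: normalize by a left translation so that $\gamma(0)=e$, use the midpoint property of $N$ to show that the geodesic preserves Euclidean midpoints, invoke Lemma \ref{l:linearity} (via continuity of $\gamma$ into $\mathbb{R}^{2n+1}$) to conclude linearity, and then use Proposition \ref{p:horiz_line} to exclude non-horizontal directions. Your treatment of the last step via rectifiability is slightly more explicit than the paper's, but the substance is identical.
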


\begin{proof}
Let $\gamma:\mathbb{R}\to\mathbb{H}^{n}$ be a map with $d_{N}(\gamma(s_1),\gamma(s_2))=|s_1-s_2|$, for all
$s_1,s_2\in\mathbb{R}$. Without loss of generality we can assume that $\gamma(0)=0$ (otherwise consider $\hat{\gamma}:=(\gamma(0))^{-1}\ast \gamma$).

\medskip

\noindent\textbf{Claim.}
$\gamma$ is $\mathbb{R}$-linear.

\begin{proof}[Proof of Claim]
The map $\gamma:(\mathbb{R},|\cdot |)\to(\mathbb{H}^{n},d_N)$ is clearly continuous with respect to the topology induced by $d_N$. Since this topology is equal the Euclidean topology on $\mathbb{R}^{2n+1}$, $\gamma$ is also continuous viewed as a map $\gamma:(\mathbb{R},|\cdot|)\to(\mathbb{R}^{2n+1}, \|\cdot \|_2)$. Furthermore, since by assumption $\gamma(0)=0$, in order to prove that $\gamma$ is linear, according to Lemma \ref{l:linearity} it suffices to check that
\begin{align}
 \gamma\left(\frac{s_1+s_2}{2}\right)=\frac{\gamma(s_1)+\gamma(s_2)}{2},\ \forall s_1,s_2\in\mathbb{R}.
 \end{align}
  For this, consider $s_1,s_2\in\mathbb{R}$. Defining $\bar{s}:=\frac{s_1+s_2}{2}$, we get
\begin{align}
d_N (\gamma(s_1),\gamma(s_2))=|s_1-s_2|=2|s_1-\bar{s}|=2d_N (\gamma(s_1),\gamma(\bar{s})),\nonumber\\
d_N (\gamma(s_1),\gamma(s_2))=|s_1-s_2|=2|s_2-\bar{s}|=2d_N (\gamma(s_2),\gamma(\bar{s})).
\end{align}
Since by assumption $N$ has the midpoint property, this implies
\begin{align}
\gamma\left(\frac{s_1+s_2}{2}\right)=\gamma(\bar{s})=\frac{\gamma(s_1)+\gamma(s_2)}{2}.\nonumber
\end{align}
\end{proof}

 The fact that $\gamma:\mathbb{R}\to\mathbb{R}^{2n+1}$ is linear means that it is actually a straight line that goes through the origin. Furthermore, since $\gamma$ is in particular a geodesic, it must be a horizontal line, and therefor there exists $z\in\mathbb{R}^{2n}\backslash\{0\}$ such that $\gamma(s)=(sz,0)$, $s\in\mathbb{R}$.
\end{proof}

\subsubsection{Strict convexity of projected norms}\label{s:StrictConvexityNorm}

In this section we provide one more sufficient condition and one necessary condition for a homogeneous norm on $\mathbb{H}^n$ to have the geodesic linearity property. These conditions are derived from the relation between $N$ and its `projection' $\|\cdot\|$.
First we observe that
Proposition \ref{p:geodesics}  has the following immediate consequence.

\begin{proposition}\label{p:SuffGLP}
Assume that $N$ is a homogeneous distance on $\mathbb{H}^n$ and let $\|z\|:= N((z,0))$. Denote by $d_{SF}$ the sub-Finsler distance associated to $\|\cdot\|$. If $(\mathbb{H}^n,d_{SF})$ has the geodesic linearity property, so does $(\mathbb{H}^n,d_N)$.
\end{proposition}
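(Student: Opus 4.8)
The plan is to obtain this as a direct corollary of Proposition \ref{p:geodesics}, so essentially no new analytic work is required. First I would fix an arbitrary infinite geodesic $\gamma:\mathbb{R}\to\mathbb{H}^n$ with respect to $d_N$; by definition this means $d_N(\gamma(s_1),\gamma(s_2))=|s_1-s_2|$ for all $s_1,s_2\in\mathbb{R}$. The goal is to show that $\gamma$ is a horizontal line through $\gamma(0)$, that is, that there exists $z_0\in\mathbb{R}^{2n}\setminus\{0\}$ with $\gamma(s)=\gamma(0)\ast(sz_0,0)$ for all $s\in\mathbb{R}$, which is precisely the geodesic linearity property for $(\mathbb{H}^n,d_N)$.

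Next I would invoke Proposition \ref{p:geodesics} in the case $I=\mathbb{R}$: every $d_N$-geodesic is simultaneously a $d_{SF}$-geodesic, where $d_{SF}$ is the sub-Finsler distance associated to the projected norm $\|z\|=N((z,0))$. Applying this to the chosen $\gamma$ shows that $\gamma$ is an infinite geodesic with respect to $d_{SF}$ as well. Since by hypothesis $(\mathbb{H}^n,d_{SF})$ has the geodesic linearity property, $\gamma$ must then be a horizontal line of the desired form. As $\gamma$ was an arbitrary infinite $d_N$-geodesic, this establishes that $(\mathbb{H}^n,d_N)$ enjoys the geodesic linearity property.

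Since all the substantive content, namely the comparison of curve lengths via Proposition \ref{p:length_homogeneous} and the resulting identification of $d_N$-geodesics with $d_{SF}$-geodesics, is already carried out in Proposition \ref{p:geodesics}, there is no genuine obstacle to overcome here. The only point worth keeping in mind is that Proposition \ref{p:geodesics} is applied with the infinite parameter interval $I=\mathbb{R}$, which is exactly the case relevant to the geodesic linearity property, and that the conclusion of that proposition passes a $d_N$-geodesic to $d_{SF}$ in the direction we need (from the finer metric $d_N$ down to $d_{SF}$), so that the hypothesis on $d_{SF}$ can be used to constrain the shape of $\gamma$.
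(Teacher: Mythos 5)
Your argument is correct and is exactly the paper's intended proof: the paper presents Proposition \ref{p:SuffGLP} as an immediate consequence of Proposition \ref{p:geodesics}, applied precisely as you do to an arbitrary infinite $d_N$-geodesic with $I=\mathbb{R}$. No further comment is needed.
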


Proposition \ref{p:SuffGLP} provides a sufficient condition for geodesic linearity of a homogeneous norm $N$ in terms of the projected norm $\|\cdot\|$. In the following we give a necessary condition.

\begin{proposition}\label{p:strictConvexityNecessary}
If $N$ is a homogeneous norm on $\mathbb{H}^n$ such that $\|\cdot\|$, defined by $\|z\|:= N((z,0))$, is not strictly convex, then $(\mathbb{H}^n,d_N)$ does not have the geodesic linearity property.
\end{proposition}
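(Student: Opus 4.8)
The plan is to prove the contrapositive by an explicit construction: assuming $\|\cdot\|$ is not strictly convex, I will produce an infinite $d_N$-geodesic that is not a horizontal line, which is exactly the failure of the geodesic linearity property. Since $\|\cdot\|$ is not strictly convex, the negation of characterization \eqref{i:SC} in Proposition \ref{p:strict_convex_normed} provides $v,w\in\mathbb{R}^{2n}\setminus\{0\}$ with $\|v+w\|=\|v\|+\|w\|$ but with $v$ and $w$ not positive multiples of one another, so in particular $v/\|v\|\neq w/\|w\|$. Reusing the construction from the proof of \eqref{i:GLP}$\Rightarrow$\eqref{i:SC}, I set
\[ \gamma_I(s):=\tfrac{v}{\|v\|}s \ \text{ for } s\le 0,\qquad \gamma_I(s):=\tfrac{w}{\|w\|}s \ \text{ for } s>0. \]
That argument shows $\gamma_I$ is an infinite geodesic of $(\mathbb{R}^{2n},\|\cdot\|)$, i.e. $\|\gamma_I(s)-\gamma_I(s')\|=|s-s'|$ for all $s,s'$, and it is not a straight line because it changes direction at the origin.

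Next I lift $\gamma_I$ to a horizontal curve $\gamma=(\gamma_I,\gamma_{2n+1}):\mathbb{R}\to\mathbb{H}^n$ by integrating the horizontality relation \eqref{eq:horiz_comp} with $\gamma_{2n+1}(0)=0$; since $\gamma_I$ is piecewise linear this determines $\gamma_{2n+1}$ uniquely and $\gamma$ is horizontal. I then verify that $\gamma$ is an infinite $d_N$-geodesic by squeezing $d_N(\gamma(s),\gamma(s'))$ between $|s-s'|$ from both sides for arbitrary $s<s'$. For the upper bound, on the bounded interval $[s,s']$ the horizontal curve $\gamma$ is Lipschitz with respect to $d_N$, so Proposition \ref{p:length_homogeneous} gives $L_{d_N}(\gamma|_{[s,s']})=\int_s^{s'}\|\dot{\gamma}_I(\xi)\|\,\mathrm{d}\xi=|s-s'|$, whence $d_N(\gamma(s),\gamma(s'))\le |s-s'|$ because distance never exceeds length. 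For the lower bound I use left-invariance together with Lemma \ref{l:normHorizVert}: the $\mathbb{R}^{2n}$-component of $\gamma(s)^{-1}\ast\gamma(s')$ is $\gamma_I(s')-\gamma_I(s)$, so
\[ d_N(\gamma(s),\gamma(s'))=N\!\left(\gamma(s)^{-1}\ast\gamma(s')\right)\ge N\!\left((\gamma_I(s')-\gamma_I(s),0)\right)=\|\gamma_I(s')-\gamma_I(s)\|=|s-s'|. \]

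Combining the two bounds shows $\gamma$ is an infinite $d_N$-geodesic. Since $\gamma_I$ is not a straight line, $\gamma$ cannot be of the form $\gamma(0)\ast(sz_0,0)$, as any such curve projects to a straight line in $\mathbb{R}^{2n}$; hence $\gamma$ is not a horizontal line and $(\mathbb{H}^n,d_N)$ fails the geodesic linearity property. The step requiring the most care is the upper bound: one must know that the $d_N$-length of the lifted curve is computed by the projected integral $\int\|\dot{\gamma}_I\|$, which rests on $\gamma$ being a horizontal (hence locally Lipschitz) curve so that Proposition \ref{p:length_homogeneous} applies. Everything else follows directly from Lemma \ref{l:normHorizVert} and the normed-space construction already carried out in Proposition \ref{p:strict_convex_normed}.
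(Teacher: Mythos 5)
Your proposal is correct and follows essentially the same route as the paper: obtain a non-linear infinite geodesic of $(\mathbb{R}^{2n},\|\cdot\|)$ from the failure of strict convexity (you make it explicit via the broken-line construction already present in the proof of the implication \eqref{i:GLP}$\Rightarrow$\eqref{i:SC}, while the paper simply invokes Proposition \ref{p:strict_convex_normed}), lift it horizontally via \eqref{eq:horiz_comp}, and squeeze $d_N(\gamma(s),\gamma(s'))$ between the length upper bound from Proposition \ref{p:length_homogeneous} and the projection lower bound from Lemma \ref{l:normHorizVert}. The only difference is cosmetic: the paper verifies the Lipschitz hypothesis of Proposition \ref{p:length_homogeneous} by passing through the sub-Finsler distance, whereas you assert it directly from horizontality of the unit-speed lift, which amounts to the same estimate.
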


\begin{proof}
Since $\|\cdot\|$ is not strictly convex, by Proposition \ref{p:strict_convex_normed}, there exists an infinite geodesic $\gamma_I:\mathbb{R}\to (\mathbb{R}^{2n},\|\cdot\|)$ which is not a line. Note that, being geodesic, this curve is Lipschitz, and hence  differentiable almost everywhere as a map into the Euclidean space $\mathbb{R}^{2n}$. We can lift $\gamma_I$ to a horizontal Lipschitz curve in $\mathbb{H}^n$. More precisely, integrating the formula in \eqref{eq:horiz_comp}, we find a function $\gamma_{2n+1}:\mathbb{R}\to \mathbb{R}$ such that $\gamma=(\gamma_I,\gamma_{2n+1}):\mathbb{R}\to \mathbb{H}^n$ is a horizontal curve.
To see that $\gamma$ is Lipschitz with respect to $d_N$, it suffices to verify that it is Lipschitz with respect to the sub-Finsler distance $d_{SF}$ associated to $\|\cdot\|$, and this is immediate:
\begin{displaymath}
d_{SF}(\gamma(s),\gamma(s')) \leq \int_s^{s'} \|\dot{\gamma}_I(\xi)\|\;\mathrm{d}\xi = |s-s'|,\quad\text{for all }-\infty<s<s'<\infty.
\end{displaymath}
We claim that it is a geodesic with respect to $d_N$. Indeed, we have for all $s<s'$ that
\begin{align*}
d_N(\gamma(s),\gamma(s'))&\leq L_N(\gamma|_{[s,s']})= \int_s^{s'} \|\dot{\gamma}_I(\xi)\|\;\mathrm{d}\xi= L_{\|\cdot\|}((\gamma_I)|_{[s,s']})
= \|\gamma_I(s)-\gamma_I(s')\|\\
&\leq d_N(\gamma(s),\gamma(s')).
\end{align*}
Here we have used (in this order), the metric definition of length, Proposition \ref{p:length_homogeneous} and Example \ref{ex:LengthNormed}, the geodesic property of $\gamma_I$ with the characterization in Lemma \ref{l:geodChar} and Lemma \ref{l:normHorizVert} with the definition of $N$ and $\|\cdot\|$. It follows that
\begin{displaymath}
d_N(\gamma(s),\gamma(s'))=  \|\gamma_I(s)-\gamma_I(s')\| = |s-s'|,\quad\text{for all }-\infty<s<s'<\infty,
\end{displaymath}
and hence $\gamma:\mathbb{R} \to (\mathbb{H}^n,d_N)$ is a geodesic. Clearly it is not a line since its projection $\gamma_I$ to $\mathbb{R}^{2n}$ is not a line. This shows that $(\mathbb{H}^n,d_N)$ does not have the geodesic linearity property.
\end{proof}

In the first Heisenberg group $\mathbb{H}^1$, the classification of the geodesics with respect to a sub-Finsler distance associated to a norm $\|\cdot\|$ is related to the following \emph{isoperimetric problem} on the Minkowski plane $(\mathbb{R}^2,\|\cdot\|)$: given a number $A$ find a closed path through $0$ of minimal $\|\cdot\|$-length which encloses (Euclidean) area $A$. To describe the solution, we introduce the following notation for the closed unit ball and dual ball in $(\mathbb{R}^2,\|\cdot\|)$:
\begin{displaymath}
B:= \{z\in\mathbb{R}^2:\; \|z\|\leq 1\}\quad\text{and} B^{\circ}:=\{w:\; \langle w,z\rangle\leq 1:\; z\in B\}.
\end{displaymath}
The \emph{isoperimetrix} $I$ is the boundary of $B^{\circ}$ rotated by $\pi/2$, and it can be parameterized as a closed curve.
Buseman \cite{10.2307/2371807} has proved that the solution to the above stated isoperimetric problem is given by (appropriate dilation and translation) of the isoperimetrix.  Note that if $\|\cdot\|$ is strictly convex, then $I$ is of class $\mathcal{C}^1$.
Based on Buseman's work and its interpretation in the Heisenberg context, one arrives at the following conclusion.

\begin{cor}\label{c:plane}
Let $N$ be a homogeneous norm on $\mathbb{H}^1$. Then $(\mathbb{H}^1,d_N)$ has the geodesic linearity property if and only if the norm defined on $\mathbb{R}^2$ by $\|z\|:=N((z,0))$ is strictly convex.
\end{cor}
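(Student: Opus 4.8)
The plan is to treat the two implications separately, exploiting the fact that one of them is already essentially contained in the preceding material. For the implication ``geodesic linearity property $\Rightarrow$ strict convexity of $\|\cdot\|$'' I would simply invoke the contrapositive of Proposition \ref{p:strictConvexityNecessary}: that proposition, valid for every $n$ and in particular for $n=1$, produces an infinite geodesic which is not a line whenever $\|\cdot\|$ fails to be strictly convex. Read the other way around, this says precisely that geodesic linearity forces $\|\cdot\|$ to be strictly convex, so no further work is needed in this direction.

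For the converse, ``$\|\cdot\|$ strictly convex $\Rightarrow$ geodesic linearity property'', the first reduction is to pass to the sub-Finsler model. By Proposition \ref{p:SuffGLP} it suffices to show that $(\mathbb{H}^1,d_{SF})$ has the geodesic linearity property, where $d_{SF}$ is the sub-Finsler distance associated to $\|\cdot\|$. So let $\gamma:\mathbb{R}\to(\mathbb{H}^1,d_{SF})$ be an infinite geodesic; after a left translation, which preserves $d_{SF}$ and maps horizontal lines to horizontal lines, we may assume $\gamma(0)=e$. Writing $\gamma=(\gamma_I,\gamma_3)$, the goal is to prove that the planar projection $\gamma_I$ is a straight line, since a horizontal curve whose projection is a straight line is a horizontal line: its unique horizontal lift coincides with the group line $s\mapsto\gamma(0)\ast(sz,0)$, as one checks directly from the horizontality relation \eqref{eq:horiz_comp}.

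The heart of the argument is the isoperimetric description of geodesics recalled above. Fixing the two endpoints of a horizontal curve in $\mathbb{H}^1$ amounts, via \eqref{eq:horiz_comp}, to fixing the endpoints of the projected planar curve together with the signed area it spans; minimizing $d_{SF}$-length is therefore a fixed-endpoint version of the Dido problem in the Minkowski plane $(\mathbb{R}^2,\|\cdot\|)$. By Busemann's theorem \cite{10.2307/2371807} and its interpretation in the Heisenberg context, its solutions are, up to dilation and translation, arcs of the isoperimetrix $I$, degenerating to straight segments precisely when the prescribed signed area is compatible with straightness. Consequently every finite geodesic $\gamma|_{[a,b]}$ projects either to a line segment or to a subarc of a dilate of $I$; and since $\|\cdot\|$ is strictly convex, $I$ is a $\mathcal{C}^1$, strictly convex \emph{closed} curve of finite $\|\cdot\|$-length.

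The step I expect to be the main obstacle is to show that an isoperimetrix arc can be minimizing only up to a bounded length. A dilate of $I$ by a factor $r$ is a closed convex loop of $\|\cdot\|$-length $r\,\ell(I)$; once a minimizing arc runs past a definite fraction of this loop it ceases to minimize, because one can then shortcut while keeping the enclosed signed area unchanged. This is the planar manifestation of the cut time in $\mathbb{H}^1$, and it is exactly what rules out closed geodesics. Hence any geodesic whose projection contains a nondegenerate isoperimetrix arc has uniformly bounded length and cannot extend to all of $\mathbb{R}$. An infinite geodesic, by contrast, has projected length $\int_s^{s'}\|\dot\gamma_I(\xi)\|\,\mathrm{d}\xi=d_{SF}(\gamma(s),\gamma(s'))=|s-s'|$, which is unbounded, so its projection cannot be an isoperimetrix arc at all. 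Therefore $\gamma_I$ lies on a straight line; a non-monotone traversal of this line would admit a strictly shorter horizontal connection of the same endpoints of $\gamma$, contradicting minimality, so $\gamma_I$ is traversed monotonically and $\gamma$ is a horizontal line through $\gamma(0)=e$. This establishes the geodesic linearity property and completes the proof.
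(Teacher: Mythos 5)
Your overall route is the same as the paper's: necessity via the contrapositive of Proposition \ref{p:strictConvexityNecessary}, reduction to the sub-Finsler model via Proposition \ref{p:SuffGLP}, the Busemann--Berestovskii classification of $d_{SF}$-geodesics through the origin as horizontal lifts of line segments or isoperimetrix arcs, and finally the observation that isoperimetrix lifts can only minimize for a finite time, so that an infinite geodesic must project to a line. The one place where you diverge is precisely the step you flag as the main obstacle. You assert that past ``a definite fraction'' of the loop one can ``shortcut while keeping the enclosed signed area unchanged''; as stated this is both imprecise (in the sub-Riemannian case the lift of a \emph{full} circle is still minimizing, so nothing is lost before one complete turn) and unproved (the existence of a strictly shorter competitor with the same endpoints and the same swept area is exactly the content of the claim, not an argument for it). The paper handles this differently: it lifts the full closed isoperimetrix through $0$ to a geodesic $\lambda:[0,\ell]\to\mathbb{H}^1$ joining two points of the $t$-axis, observes that translating the isoperimetrix so that a point with a different tangent passes through $0$ yields a \emph{second}, distinct geodesic between the same two points, and then argues that any length-minimizing extension of $\lambda$ past $\lambda(\ell)$ would, by concatenation with that second geodesic, produce a geodesic segment through $\lambda(\ell)$ whose projection is neither a line segment nor an isoperimetric path --- contradicting the classification. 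That branching argument is what actually delivers the finite cut time, and you should substitute it (or an equivalent rigorous argument) for your shortcut heuristic; the rest of your proof, including the final reduction from ``projection lies on a line'' to ``$\gamma$ is a horizontal line'', is fine.
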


\begin{proof}
Proposition \ref{p:strictConvexityNecessary} says that the geodesic linearity property of $d_N$ implies strict convexity of $\|\cdot\|$. For the reverse implication it suffices, according to Proposition \ref{p:SuffGLP}, to show that strict convexity of $\|\cdot\|$ implies the geodesic linearity property of the associated sub-Finsler distance. So let $d_{SF}$ be the sub-Finsler distance on $\mathbb{H}^1$ given by the norm $\|\cdot\|$. By left-invariance it is enough to show that all infinite geodesics in $(\mathbb{H}^1,d_{SF})$ which pass through the origin are straight lines. By \cite[Theorem 1]{MR1290095}, \cite[\S 4]{MR2586628} it is known that if $\|\cdot\|$ is strictly convex, then the geodesics  in $(\mathbb{H}^1,d_{SF})$ passing through $0$ project to the $(x,y)$-plane either to (i) straight lines or line segments, or (ii) isoperimetric paths passing through zero, see also Section 2.3 in \cite{zbMATH06363213}. By an \emph{isoperimetric path} we mean a subpath of a dilated and left-translated isoperimetrix in the sense of Buseman. Conversely, every horizontal lift of such a line segment or isoperimetric path through $0$ yields a geodesic in $(\mathbb{H}^1,d_{SF})$ passing through the origin.

Let $I$ be a (translated and dilated) isoperimetrix passing through $0$. This is a closed curve which can be lifted to a geodesic, say $\lambda: [0,\ell]\to (\mathbb{H}^1,d_{SF})$. We claim that $\lambda$ cannot be extended to a length minimizing curve on any larger interval, and thus stops to be an isometric embedding. The reason for this is that we can translate $I$ so that some other point passes through $0$ with a tangent different from the one of the original curve $I$ at $0$. Lifting the resulting curve, we obtain two different geodesics connecting the two points $0=\lambda(0)$ and $\lambda(\ell)$ on the $t$-axis. If we could extend to a length minimizing curve past the point $\lambda(\ell)$, we would construct by concatenation a geodesic segment containing $\lambda(0)$ which does not project to a isoperimetric path or a line segment. This is impossible and we see that the lifts of an isoperimetrix stop to be length minimizing after finite time. (See also the bottom of p.5 in \cite{MR1290095}.)

 It follows that the only infinite geodesics are horizontal lines, and the proof is complete.
\end{proof}

\section{The main result}\label{s:main}

In this section we study isometric embeddings of Euclidean spaces or Heisenberg groups into Heisenberg groups, with homogenous distances in the respective groups. The existence of an isometric embedding $f:\mathbb{R}^m \to \mathbb{H}^n$ imposes restrictions on $m$ and $n$. Namely, it is known from \cite{Ambrosio2000,Magnani2004} that $\mathbb{H}^n$ is purely $m$-unrectifiable for $m>n$, hence $\mathcal{H}^m(f(A))=0$ for every Lipschitz map $f:A \subseteq \mathbb{R}^m \to \mathbb{H}^n$ if $m>n$. (This holds for any choice of metric on $\mathbb{R}^m$ which is bi-Lipschitz equivalent to the Euclidean distance, and any choice of metric on $\mathbb{H}^n$ equivalent to the standard sub-Riemannian distance.) Since isometric embeddings are bi-Lipschitz mappings onto their domains and thus send positive $\mathcal{H}^m$-measure sets onto positive $\mathcal{H}^m$-measure sets, it follows that there does not exist an isometric embedding $f:\mathbb{R}^m \to \mathbb{H}^n$ if $m>n$. Moreover, there clearly cannot exist an isometric embedding $f:\mathbb{H}^m \to \mathbb{H}^n$ for $m>n$. Thus the range of parameters $m$ and $n$ in the Theorem \ref{t:main_Eucl_Heis} below is the natural one.

\begin{thm}\label{t:main_Eucl_Heis}
Let $\mathbb{G}_1 \in \{(\mathbb{R}^m,+),(\mathbb{H}^m,\ast)\}$ and $\mathbb{G}_2 = (\mathbb{H}^n,\ast)$, $m\leq n$, be endowed with left invariant-homogeneous distances $d_1$ and $d_2$, respectively. If $d_2$ satisfies the geodesic linearity property, then every isometric embedding $f:(\mathbb{G}_1,d_1) \to (\mathbb{G}_2,d_2)$ is of the form $f=L_p \circ A$, where $L_g$ denotes left translation by an element $g\in \mathbb{G}_2$ and $A: \mathbb{G}_1 \to \mathbb{G}_2$ is a homogeneous homomorphism.
\end{thm}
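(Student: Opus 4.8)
The plan is to normalise and then show, in both cases, that $f$ carries infinite geodesics of the source to horizontal geodesic lines of the target, after which the Euclidean and the Heisenberg source are treated separately. Composing $f$ with the left translation $L_{f(e)^{-1}}$, I may assume $f(e)=e$; it then suffices to prove that the normalised map is a homogeneous homomorphism. The common first step is that an isometric embedding sends every infinite geodesic of the source to an infinite geodesic of the target: if $\gamma\colon\mathbb{R}\to\mathbb{G}_1$ is an infinite geodesic then $d_2(f\gamma(s),f\gamma(s'))=d_1(\gamma(s),\gamma(s'))=|s-s'|$. By the geodesic linearity property of $d_2$, the image $f\circ\gamma$ is a horizontal line, which by Proposition \ref{p:horiz_line} is a genuine straight line in $\mathbb{R}^{2n+1}$, affinely parametrised by arc length. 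In the source the relevant infinite geodesics are all lines (if $\mathbb{G}_1=\mathbb{R}^m$) and all horizontal lines (if $\mathbb{G}_1=\mathbb{H}^m$, again by Proposition \ref{p:horiz_line}).

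For $\mathbb{G}_1=\mathbb{R}^m$ this already suffices. Every pair $v,w$ lies on a common line, whose metric midpoint is the Euclidean midpoint $\tfrac{v+w}{2}$; since $f$ preserves metric midpoints and maps the line to a straight line on which the metric midpoint is again the Euclidean one, I obtain $f(\tfrac{v+w}{2})=\tfrac{f(v)+f(w)}{2}$ for all $v,w$. As $f$ is continuous with $f(0)=0$, Lemma \ref{l:linearity} gives that $f$ is linear, and since lines through $0$ map to horizontal lines through $0$ the last coordinate of $f$ vanishes, i.e. $f(v)=(Tv,0)$. The isometry condition reads $N_2((Tu,-2\omega_n(Tv,Tu)))=\|u\|_1$ for all $u,v$; taking $v=0$ gives $N_2((Tu,0))=\|u\|_1$, and since a homogeneous norm grows without bound along vertical lines (a reverse triangle estimate with the central factor $(0,\tau)$, using Lemma \ref{l:normHorizVert}), the two are compatible only if $\omega_n(Tv,Tu)=0$ for all $u,v$. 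Thus $T^tJ_nT=0$ and $f$ is a homogeneous homomorphism by Lemma \ref{l:homohHomo}.

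The case $\mathbb{G}_1=\mathbb{H}^m$ is harder and I would reduce the first layer to the Euclidean situation by passing to the quotient by the centre $Z=\{(0,t)\}$. By Lemma \ref{l:normHorizVert} one has $\inf_{t}d_1(p,q\ast(0,t))=\|z_p-z_q\|_1$, so the quotient distance on $\mathbb{H}^m/Z\cong\mathbb{R}^{2m}$ is exactly the projected norm $\|\cdot\|_1$, and similarly in the target. The geodesic linearity property of $d_2$ forces $\|\cdot\|_2$ to be strictly convex (the contrapositive of Proposition \ref{p:strictConvexityNecessary}). Hence, \emph{once it is known that $f$ maps centre cosets into centre cosets}, the induced map $\bar f\colon(\mathbb{R}^{2m},\|\cdot\|_1)\to(\mathbb{R}^{2n},\|\cdot\|_2)$ is an isometric embedding of normed spaces and therefore, by the Mazur–Ulam theorem for strictly convex targets recalled above, is affine; as $\bar f(0)=0$ it is a linear map $T$ with $\|Tz\|_2=\|z\|_1$. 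Writing the horizontal lines through $p$ as $f(p\ast(w,0))=f(p)\ast(\Psi_p(w),0)$ (which exists by the first step, with $\|\Psi_p(w)\|_2=\|w\|_1$), linearity of $\bar f$ forces $\Psi_p\equiv T$ for every $p$.

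The \emph{sole remaining obstacle} is this central rigidity, namely that $f$ sends each coset $p\ast Z$ into $f(p)\ast Z$. I would attack it by comparing the horizontal lines through $p$ and through $p\ast(0,s)$: being everywhere equidistant and of equal speed, and since a homogeneous norm grows linearly in the horizontal and super-linearly in the vertical direction (Lemma \ref{l:normHorizVert} and the reverse triangle estimate), their images must be parallel, which forces $\Psi_{p\ast(0,s)}=\Psi_p$ and makes the horizontal displacement of $f(p\ast(0,s))$ be $\omega_n$-orthogonal to the first-layer image; strict convexity of $\|\cdot\|_2$ is then invoked to upgrade this to $f(p\ast(0,s))\in f(p)\ast Z$. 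For $m\ge2$ one can also use that $f$ is affine on every isotropic (e.g. Lagrangian) horizontal coset, where any two points are joined by a horizontal line, so that midpoint preservation plus a connectivity argument yields the result; the genuinely delicate case is $m=1$, where there are no nontrivial $\omega_n$-orthogonal horizontal directions and strict convexity must carry the full weight. Granting $\Psi_p\equiv T$, the reconstruction is routine: the identity $f(p\ast(w,0))=f(p)\ast(Tw,0)$ with $p=(0,t)$ gives $f((w,t))=f((0,t))\ast(Tw,0)$; comparing this with the same identity based at $(w_0,t_0)$ shows the horizontal part of $f((0,t))$ is independent of $t$, hence $0$ (as $2\omega_m$ is onto $\mathbb{R}$), and matching the central coordinates gives $\sigma(t)=at$ together with $\omega_n(Tw_0,Tw)=a\,\omega_m(w_0,w)$, i.e. $T^tJ_nT=aJ_m$. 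By Lemma \ref{l:homohHomo} the resulting map $f((w,t))=(Tw,at)$ is a homogeneous homomorphism, and undoing the initial translation gives $f=L_{f(e)}\circ A$.
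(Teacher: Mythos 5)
Your Euclidean case is complete and correct: sending lines to horizontal lines, extracting midpoint preservation from the affine parametrization of horizontal lines, invoking Lemma \ref{l:linearity}, and then killing $\omega_n(Tv,Tu)$ by the unbounded vertical growth of the homogeneous norm is a valid route, slightly different from the paper's, which instead compares the images of the parallel lines $s\mapsto ys$ and $s\mapsto x+ys$ by growth estimates and only then applies Lemma \ref{l:linearity}.

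The Heisenberg case, however, has a genuine gap, and you have located it yourself: your entire quotient/Mazur--Ulam architecture is conditional on the ``central rigidity'' statement that $f$ maps each coset $p\ast Z$ into $f(p)\ast Z$, and you do not prove it. Your $\omega_n$-orthogonality argument only places the horizontal displacement $u$ of $f(p\ast(0,s))$ relative to $f(p)$ in the $\omega_n$-orthogonal complement of $T(\mathbb{R}^{2m})$, which is trivial only when $m=n$; for $m<n$ this complement is $2(n-m)$-dimensional, the appeal to strict convexity to ``upgrade'' is not an argument, the isotropic-coset remark for $m\ge 2$ never reaches the central direction, and you explicitly concede that $m=1$ is unresolved. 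The missing idea is elementary, and you already possess every ingredient for it: once $f(p\ast(w,0))=f(p)\ast(Tw,0)$ holds for all $p$ and $w$ with a single homogeneous $T$ (which your parallelism/growth argument yields independently of central rigidity), write a central element as a product of horizontal ones,
\begin{displaymath}
(0,t)=\left(-\tfrac{t}{4}e_1,0\right)\ast\left(e_{m+1},0\right)\ast\left(\tfrac{t}{4}e_1,0\right)\ast\left(-e_{m+1},0\right),
\end{displaymath}
apply $f$ iteratively, and observe that the horizontal part of the image is $T\left(-\tfrac{t}{4}e_1\right)+T\left(e_{m+1}\right)+T\left(\tfrac{t}{4}e_1\right)+T\left(-e_{m+1}\right)=0$ by the homogeneity of $T$ along lines. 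This is exactly how the paper shows that the vertical axis is mapped to the vertical axis; it needs no strict convexity of the projected norm and no case distinction on $m$. With that in hand, your reconstruction of $h(t)=at$ and $T^tJ_nT=aJ_m$ goes through as you describe.
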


\begin{proof}
We first prove the theorem in the case $\mathbb{G}_1= \mathbb{R}^m$ and $\mathbb{G}_2= \mathbb{H}^n$. Assume that $\mathbb{R}^m$ is endowed with a norm $\|\cdot\|$ and $\mathbb{H}^n$ is equipped with a homogeneous norm $N$ and associated homogeneous distance $d_N$. By post-composing with a left translation if necessary, we may assume without loss of generality that $f(0)=0$ and we will show that $f$ equals a homogeneous homomorphism $A$.

Every line (affine $1$-dimensional space) in $\mathbb{R}^m$ can be parameterized as an infinite geodesic $\ell: \mathbb{R} \to (\mathbb{R}^m,\|\cdot\|)$. Since $f$ is an isometric embedding, $f \circ \ell$ is an infinite geodesic in $(\mathbb{H}^n,d_N)$, and thus, by the assumption on the geodesic linearity property and Proposition \ref{p:horiz_line}, a horizontal line.

Now every point in $\mathbb{R}^m$ lies on a line through the origin, whose image must be a horizontal line in through the origin in $\mathbb{R}^{2n}\times \{0\} \subset \mathbb{H}^n$ by what we said above. It follows that $f(\mathbb{R}^m) \subseteq \mathbb{R}^{2n} \times \{0\}$ and $f$ is of the form
\begin{displaymath}
f(x) = (T(x),0),\quad\text{for all }x\in\mathbb{R}^m,
\end{displaymath}
for a suitable mapping $T:\mathbb{R}^m \to \mathbb{R}^{2n}$. We will show that $T$ is linear, thus proving the claim that $f$ is a homogeneous homomorphism. To see this, consider arbitrary $x,y\in\mathbb{R}^m$ and $s\in \mathbb{R}$. By what we said so far and since $f$ is isometric, in particular along lines, we know that there exist $z,z_0$, and $\zeta$ in $\mathbb{R}^{2n}$ such that
\begin{displaymath}
f( ys )= (z s,0)\quad\text{and}\quad f(x+ys)= (z_0 + \zeta s,0),\text{for all }s\in \mathbb{R}.
\end{displaymath}
Using the fact that $f$ preserves distances, we find
\begin{align*}
\|x\|&= \|(x+ys)-ys\| = d_N((z_0+\zeta s,0),( zs ,0))= N ((z_0 + s (\zeta-z),2 \omega_n(-zs,z_0 + \zeta s))) \\&\gtrsim |s| \|\zeta-z\|_2 -\|z_0\|_2,
\end{align*}
where the last inequality (which holds up to an absolute multiplicative constant) follows from the equivalence of homogeneous norms on $\mathbb{H}^n$. Letting $|s|$ tend to infinity, we arrive at a contradiction unless $\|\zeta-z\|_2=0$. Thus we conclude that necessarily $\zeta = z$. Using this information and an analogous argument as before, we deduce further that
\begin{displaymath}
\|x\| \gtrsim \sqrt{|\omega_n(-zs,z_0 + \zeta s)|}= \sqrt{|\omega_n(-zs,z_0 + \zeta s)|} = \sqrt{|s|} \sqrt{|\omega_n(-z,z_0)|}.
\end{displaymath}
Letting $|s|$ tend to infinity, we conclude that necessarily $\omega_n(z,z_0)=0$.

We have therefore that
\begin{equation}\label{eq:additivity}
(T(x+y),0)=f\left(x+y\right)=(z_0 +\zeta,0)=(z_0 + z,0)= (Tx+ Ty,0)
\end{equation}
and
\begin{displaymath}
f(x+y)=f(x)\ast f(y),
\end{displaymath}
which shows that $f$ is a group homomorphism. It remains to verify the homogeneity.

From \eqref{eq:additivity} we deduce that
\begin{displaymath}
T(x+y)= T(x)+ T(y), \quad \text{for all }x,y\in\mathbb{R}^{m}.
\end{displaymath}
From this identity it follows further that $T(x)/2 = T(x/2)$ for all $x\in \mathbb{R}^m$. Thus we conclude that $T:\mathbb{R}^m \to \mathbb{R}^{2n}$ is a continuous map with $T(0)=0$ and the following property holds
\begin{displaymath}
T\left(\frac{x+y}{2}\right) = \frac{T(x)+T(y)}{2},\quad \text{for all }x,y\in\mathbb{R}^m.
\end{displaymath}
Lemma \ref{l:linearity} implies that $T$ is linear. This concludes the first part in the proof of Theorem \ref{t:main_Eucl_Heis}.

\medskip

Next we prove the theorem in the case $\mathbb{G}_1= \mathbb{H}^m$, $\mathbb{G}_2= \mathbb{H}^n$, with left-invariant metrics $d_1$ and $d_2$ induced by homogeneous norms $N_1$ and $N_2$. Since left translations are isometries, we may again assume without loss of generality that $f(0)=0$, and it suffices to show that $f$ is a homogeneous homomorphism $A$. Every horizontal line in $\mathbb{H}^m$ can be parameterized as a geodesic $\ell:\mathbb{R} \to (\mathbb{H}^m,d_1)$. Since $f$ is an isometric embedding, $f\circ \ell$ is an infinite geodesic in $(\mathbb{H}^n,d_2)$ and thus, by the geodesic linearity property, a horizontal line.
Hence, for every $z\in \mathbb{R}^{2m}$ and $p_0\in \mathbb{H}^m$, there exist $T(z,p_0) \in \mathbb{R}^{2n}$ and $q_0(z,p_0)\in \mathbb{H}^n$ such that
\begin{equation}\label{eq:imageLine}
f(p_0 \ast (sz,0)) = q_0(z,p_0) \ast (s T(z,p_0),0),\quad \text{for all }s\in \mathbb{R}.
\end{equation}
We will show that $q_0$ depends only on $p_0$ and that $T$ depends only on $z$.

Since $f$ is isometric on the line $s\mapsto p_0 \ast (sz,0)$, we have by left-invariance of the norms that $N_1((z,0))=N_2((T(z,p_0),0))$.
Inserting $s=0$ in the formula in \eqref{eq:imageLine}, we find $f(p_0)=q_0(z,p_0)$ and hence
\begin{equation}\label{eq:first_form_f}
f(p_0 \ast (sz,0)) = f(p_0) \ast (s T(z,p_0),0),\quad\text{for all }s\in \mathbb{R}.
\end{equation}
We show that $T$ depends only on $z$, but not on $p_0$, in other words, every fibration determined by a left-invariant horizontal vector field in $\mathbb{H}^m$ is mapped onto an analogous fibration in the target. To see this, we exploit the comparability of all homogeneous distances on a Heisenberg group. For convenience, we denote $f(p)=(\zeta(p),\tau(p)) \in \mathbb{R}^{2n}\times \mathbb{R}$. This yields by \eqref{eq:first_form_f} for all $z\in\mathbb{R}^{2m}$, $p_0=(z_0,t_0) \in \mathbb{H}^m$, and $s\in\mathbb{R}$ that
\begin{align*}
\|T(z,p_0)-T(z,0)\|_2 |s|- \|\zeta(p_0)\|_2 & \leq \|\zeta(p_0) + s(T(z,p_0)-T(z,0))\|_2\\
&\lesssim d_2(f(p_0 \ast (sz,0)),f((sz,0)))\\
&= d_1 ((p_0 \ast (sz,0)),(sz,0))\\
&\lesssim \|z_0\|_2 + \sqrt{|t_0 + 4 s \omega_m(z_0,z)|}\\
&\leq \|z_0\|_2 + \sqrt{|t_0|} + 2 \sqrt{|s|}\sqrt{|\omega_m(z_0,z)|}.
\end{align*}
We observe that the left-hand side of the above chain of inequalities grows linearly as $|s|\to \infty$, whereas the right-hand side exhibits only a sub-linear growth. This would lead to a contraction, unless
\begin{displaymath}
T(z,p_0)=T(z,0)
\end{displaymath}
which must hence be the case. Thus we have found that
\begin{equation}\label{eq:first_formula_f}
f(p_0 \ast (sz,0)) = f(p_0) \ast (sT(z),0),\quad \text{for all }s\in\mathbb{R},\,p_0\in\mathbb{H}^m,\,z\in\mathbb{R}^{2m}
\end{equation}
for a suitable function $T:\mathbb{R}^{2m}\to \mathbb{R}^{2n}$. In particular, by choosing $p_0=(0,t)$ and $s=1$, we find
\begin{equation}\label{eq:sep_vertical}
f((z,t)) = f((0,t))\ast (T(z),0),\quad\text{for all }(z,t)\in \mathbb{R}^{2m}\times \mathbb{R},
\end{equation}
and by choosing $p_0=0$, we see that
\begin{equation}\label{eq:f_horiz}
 (T(sz),0)=f((sz,0))= (sT(z),0),\quad\text{for all }z\in \mathbb{R}^{2m},\,s\in\mathbb{R}.
\end{equation}

In the next step we will show that there exists a function $h:\mathbb{R}\to \mathbb{R}$ such that $f(0,t)=(0,h(t))$ for all $t\in\mathbb{R}$, that is, the vertical axis gets mapped to the vertical axis. To see this we use the fact that for arbitrary $t\in\mathbb{R}$, the two points $(0,0)$ and $(0,t)$ can be connected by a concatenation of four suitable horizontal line segments, parameterized as follows:
\begin{align*}
\ell_1(s)&:= \left( -s \tfrac{t}{4} e_1,0\right), & s\in [0,1]\\
\ell_2(s)&:= \left(- \tfrac{t}{4} e_1,0\right) \ast \left( s e_{m+1},0\right),& s\in [0,1]\\
\ell_3(s)&:= \left(-\tfrac{t}{4}e_1 + e_{m+1},\tfrac{t}{2}\right)\ast \left( s \tfrac{t}{4}e_1,0\right),&s\in [0,1]\\
\ell_4(s)&:= \left(e_{m+1},t\right)\ast \left(-s e_{m+1},0\right),&s\in [0,1].
\end{align*}
Here $e_1$ and $e_{(m+1)}$ denote the standard first (respectively $(m+1)$-th) standard unit vector in $\mathbb{R}^{2m}$. In particular, we can write
\begin{displaymath}
(0,t) = \left(-\tfrac{t}{4}e_1,0\right)\ast \left(e_{m+1},0\right) \ast \left(\tfrac{t}{4}e_1,0\right) \ast \left(-e_{m+1},0\right).
\end{displaymath}
We apply $f$ to both sides of the equation and apply iteratively the identity \eqref{eq:first_formula_f}. In this way we obtain two different formulae for the same point in $\mathbb{H}^n$. By comparing the projection to $\mathbb{R}^{2n}\times \{0\}$, we find
\begin{displaymath}
\zeta(0,t)= T\left(-\tfrac{t}{4}e_1\right) + T\left(e_{m+1}\right) + T\left(\tfrac{t}{4}e_1\right) + T\left(-e_{m+1}\right),
\end{displaymath}
which, by the homogeneity of $T$ established in \eqref{eq:f_horiz}, yields $\zeta(0,t)=0$. Using \eqref{eq:sep_vertical}, we conclude that $f$ is of the form
\begin{displaymath}
f(z,t)=(\zeta(z,t),\tau(z,t))= (T(z),h(t)),\quad \text{for all }(z,t)\in \mathbb{R}^{2m}\times \mathbb{R}
\end{displaymath}
with $T( sz)=s T(z)$ for all $s\in \mathbb{R}$.
The restriction of the map $f$ to the vertical axis maps the vertical axis in $\mathbb{H}^m$ to the vertical axis in $\mathbb{H}^n$, and since it is an isometric embedding, this mapping must in fact be surjective. The identity
\begin{displaymath}
N_1((0,t))= N_2((0,h(t))),\quad\text{for all }t\in \mathbb{R},
\end{displaymath}
then implies that $h(t)= a t$ for a suitable constant $a\in\mathbb{R}$.

We plug this formula into the identity \eqref{eq:sep_vertical}. This yields for all $z,z_0\in\mathbb{R}^{2m}$ and $t_0\in \mathbb{R}$ that
\begin{align*}
(T(z_0+z),a t_0 + 2 a\omega_n(z_0,z))&= f((z_0+z,t_0 + 2\omega_m(z_0,z))\\
&= f((z_0,t_0))\ast (T(z),0)\\
&= (T(z_0),a t_0)\ast (T(z),0)\\
&= (T(z_0)+T(z),a t_0 + 2\omega_n(T(z_0),T(z))).
\end{align*}
Hence we conclude for all $z,z_0 \in \mathbb{R}^{2m}$ that
\begin{displaymath}
\left\{\begin{array}{l}T(z_0+z)=T(z_0) + T(z)\\ a \omega_n(z_0,z) = \omega_n(T(z_0),T(z)).\end{array} \right.
\end{displaymath}
The first condition shows together with the homogeneity established in \eqref{eq:f_horiz} that $T:\mathbb{R}^{2m}\to \mathbb{R}^{2n}$ is linear. The characterization of homogeneous homomorphisms given in Lemma \ref{l:homohHomo} concludes the proof of the theorem.
\end{proof}

\section{Examples of homogeneous norms}\label{ex:Examples}

\subsection{Norms}\label{s:exNorms}

We give a few examples of homogeneous norms on $\mathbb{H}^n$, both classical and new ones, and prove their properties regarding convexity.
More examples of homogeneous norms on $\mathbb{H}^n$ can be found for instance in \cite{MR1067309,Lee06lpmetrics,LDNG}.


\begin{ex}[Kor\'{a}nyi-Cygan norm]\label{t:KoranyiNorm}

Let $n\in\mathbb{N}_{\geq1}$ and $\|\cdot \|_2$ be the Euclidean norm on $\mathbb{R}^{2n}$. Then, the map
\begin{align}
 N_{K}:\mathbb{H}^{n}\to\mathbb{R}_{\geq 0},\ (z,t)\mapsto((\|z\|_2)^{4}+t^{2})^{\frac{1}{4}},\nonumber
 \end{align}
 defines a horizontally strictly convex homogenous norm on the Heisenberg group.
\end{ex}

This is one of the best known homogeneous norms on the Heisenberg group, partially because of its role in the definition of the fundamental solution of the sub-Laplacian on $\mathbb{H}^n$ found by G.\ B.\ Folland in \cite{MR0315267}.

\begin{proof}
The fact that $N_K$ defines a homogeneous norm is well known and was first proved by J.\ Cygan in \cite{zbMATH03745826}. We include here a proof for the triangle inequality because this is needed in establishing the horizontal strict convexity of $N_K$. For this, consider $(z,t),(z',t')$ in $\mathbb{H}^{n}$. In this case, it is convenient to use the complex notation of the Heisenberg group and interpret $z$ and $z'$ as vectors in $\mathbb{C}^{n}$ (this is possible if we identify the elements of these two spaces in the way we did in Section \ref{ss:TheHeisenberggroup}). Taking into account the Cauchy-Schwarz inequality, we get
\begin{align}
 \left(N_{K}((z,t\right)*(z',t')))^{2}&=\left(\left(\sum_{j=1}^{n}|z_{j}+z_{j}'|^{2}\right)^{2}+\left(t+t'+2\sum_{j=1}^{n}\mathrm{Im}(z_{j}\bar{{z'_{j}}})\right)^{2}\right)^{\frac{1}{2}}\nonumber\\
 &=\left|\left(\sum_{j=1}^{n}(|z_{j}|^{2}+|z'_{j}|^{2}+2\mathrm{Re}(z_{j}\bar{{z'_{j}}})\right)+\mathrm{i}\left(t+t'+2\sum_{j=1}^{n}\mathrm{Im}(z_{j}\bar{{z'_{j}}})\right)\right|\nonumber\\
 &=\left|(\sum_{j=1}^{n}|z_{j}|^{2})+\mathrm{i}t+(\sum_{j=1}^{n}|z'_{j}|^{2})+\mathrm{i}t'+2\sum_{j=1}^{n}z_{j}\bar{{z'_{j}}}\right|\nonumber\\
 &\leq\left|\sum_{j=1}^{n}|z_{j}|^{2}+\mathrm{i}t\right|+\left|\sum_{j=1}^{n}|z_{j}'|^{2}+\mathrm{i}t'\right|+2\left|\sum_{j=1}^{n}z_{j}\bar{z_{j}'}\right|\label{ineq:1}\\
 &\leq\left|\sum_{j=1}^{n}|z_{j}|^{2}+\mathrm{i}t\right|+\left|\sum_{j=1}^{n}|z_{j}'|^{2}+\mathrm{i}t\right|+2\sum_{j=1}^{n}|z_{j}||z_{j}'|\label{ineq:2}\\
&=(N_{K}(z,t))^{2}+(N_{K}(z',t'))^{2}+2\sum_{j=1}^{n}|z_{j}||z_{j}'|\nonumber\\
&\leq(N_{K}(z,t))^{2}+(N_{K}(z',t'))^{2}+2(\sum_{j=1}^{n}|z_{j}|^{2})^{\frac{1}{2}}(\sum_{j=1}^{n}|z'_{j}|^{2})^{\frac{1}{2}}\label{ineq:3}\\
&\leq(N_{K}(z,t))^{2}+(N_{K}(z',t'))^{2}+2N_{K}(z,t))N_{K}(z',t')\label{ineq:4}\\
&=(N_{K}(z,t)+N_{K}(z',t'))^{2}.\nonumber
\end{align}
Now, for proving horizontal strict convexity, assume that $p=(z,t)\neq(0,0)\neq(z',t')=p'$ and that $N_K(p\ast p') = N_K(p) +N_K(p')$.
Then equality must hold in \eqref{ineq:1}, \eqref{ineq:2}, \eqref{ineq:3} and \eqref{ineq:4}.
First, \eqref{ineq:4} implies that
\begin{displaymath}
\left((\sum_{j=1}^{n}|z_{j}|^{2})^{2}\right)^{\frac{1}{4}}\left((\sum_{j=1}^{n}|z'_{j}|^{2})^{2}\right)^{\frac{1}{4}}=\left((\sum_{j=1}^{n}|z_{j}|^{2})^{2}+t^{2}\right)^{\frac{1}{4}}\left((\sum_{j=1}^{n}|z'_{j}|^{2})^{2}+t'^{2}\right)^{\frac{1}{4}},
\end{displaymath}
from which we conclude that $t=t'=0$, $z\neq 0$ and $z'\neq 0$.

On the other hand, \eqref{ineq:2} and \eqref{ineq:3} yield
\begin{displaymath}
\left|\sum_{j=1}^{n}z_{j}\bar{z_{j}'}\right|=\left(\sum_{j=1}^{n}|z_{j}|^{2}\right)^{\frac{1}{2}}\left(\sum_{j=1}^{n}|z'_{j}|^{2}\right)^{\frac{1}{2}}.
\end{displaymath}
Since in the Cauchy-Schwarz inequality on $\mathbb{C}^{n}$ equality holds only if the involved vectors are linearly dependent, it follows that there must exist a complex number $\alpha\neq0$ such that $z'=\alpha z$. Furthermore, \eqref{ineq:1} with $t=t'=0$ is equivalent to
\begin{equation}\label{eq:conseqEqComplex}
\left|(\sum_{j=1}^{n}|z_{j}|^{2})+(\sum_{j=1}^{n}|z'_{j}|^{2})+2\sum_{j=1}^{n}z_{j}\bar{{z'_{j}}}\right|=\sum_{j=1}^{n}|z_{j}|^{2}+\sum_{j=1}^{n}|z_{j}'|^{2}+2\sum_{j=1}^{n}|z_{j}||z_{j}'|.
\end{equation}
Inserting $z'=\alpha z$ in \eqref{eq:conseqEqComplex}, and using the fact that $|w+w'|=|w|+|w'|$ if and only if $w\bar{w'}\in\mathbb{R}_{\geq0},\ \forall w,w'\in\mathbb{C}$, we obtain
\begin{align*}
&|(\|z\|_2)^2+|\alpha|^2(\|z\|_2)^2+2\bar{\alpha}(\|z\|_2)^2|=(\|z\|_2)^2+|\alpha|^2 (\|z\|_2)^2+2|\alpha|(\|z\|_2)^2\nonumber\\
&\Leftrightarrow |1+|\alpha|^2+2\bar{\alpha}|=|1+|\alpha|^2|+|2\bar{\alpha}|,\ \Leftrightarrow(1+|\alpha|^2)2\alpha\in\mathbb{R}_{\geq0},\ \Leftrightarrow\alpha\in\mathbb{R}_{>0}.
\end{align*}
As result, we obtain that $p=(z,0)$ and $p'=(\alpha z,0)$ with $\alpha\in\mathbb{R}$, which proves the horizontal strict convexity of $N_K$.

\end{proof}

\begin{ex}[Lee-Naor norm]\label{t:LeeNaor}
Let $N_K$ be the Koranyi-Cygan norm on $\mathbb{H}^{n}$, and $\|\cdot \|_2$ be the Euclidean norm on $\mathbb{R}^{2n}$. Then, the map
\begin{align}
 N:\mathbb{H}^{n}\to\mathbb{R}_{\geq0},\ (z,t)\mapsto\sqrt{(N_{K}(z,t))^{2}+(||z||_2)^{2}},\nonumber
 \end{align}
 defines a horizontally strictly convex homogenous norm on the Heisenberg group.
\end{ex}

The norm in Example \ref{t:LeeNaor} has appeared independently in different contexts. J.\ Lee and A.\ Naor \cite{Lee06lpmetrics}  showed that $\sqrt{d_N}$ is a metric of negative type on $\mathbb{H}^1$, that is, $(\mathbb{H}^1,\sqrt{d_N})$ admits an isometric embedding into Hilbert space.
This provided a counterexample to the so-called Goemans-Linial conjecture, since it follows at the same time by the work of
J.\ Cheeger and B.\ Kleiner that $(\mathbb{H}^1,d_N)$ does not biLipschitzly embed into $L^1$. The distance $d_N$ was also used by Le Donne and Rigot in \cite{LDR} as an example of a homogeneous distance on $\mathbb{H}^n$ for which the Besicovitch covering property holds. In fact, $d_N$ is a particular instance of a whole family of homogeneous norms which were constructed by W.\ Hebisch and A.\ Sikora \cite{MR1067309}, and for which Le Donne and Rigot established the Besicovitch covering property.

\begin{proof} It is known that $N$ defines a homogeneous norm, see \cite{Lee06lpmetrics, LDR}.
As in the proof of Theorem \ref{t:KoranyiNorm}, the horizontal strict convexity will be deduced from a careful inspection of the proof of the triangle inequality. For this, let $(z,t),\ (z',t')\in\mathbb{H}^{n}$. First, we remark that
\begin{displaymath}
N_{K}(z,t)N_{K}(z',t')+\|z\|_{2}\|z'\|_{2}\leq\left(N_{K}(z,t)^2+\|z\|_{2}^2\right)^{\frac{1}{2}} \left(N_{K}(z',t')^2+\|z'\|_{2}^2\right)^{\frac{1}{2}}.
\end{displaymath}
Using this, we obtain
\begin{align}
&N((z,t)\ast (z',t'))^2=N_{K}((z,t)\ast (z',t'))^2+\|z+z'\|_{2}^2\nonumber\\
&\leq(N_{K}(z,t)+N_{K}(z',t'))^2+(\|z\|_{2}+\|z'\|_{2})^2\label{ineq_Lee1}\\
&=N_{K}(z,t)^2+\|z\|_2^2+N_{K}(z',t')^2+\|z'\|_2^2+2\left(N_{K}(z,t)N_{K}(z',t')+\|z\|_{2}\|z'\|_{2}\right)\nonumber\\
&\leq N_{K}(z,t)^2+\|z\|_2^2+N_{K}(z',t')^2+\|z'\|_2^2+2\left(N_{K}(z,t)^2+\|z\|_{2}^2\right)^{\frac{1}{2}} \left(N_{K}(z',t'))^2
+\|z'\|_{2}\right)^2)^{\frac{1}{2}}\nonumber\\
&=\left(\left(N_{K}(z,t)^2+\|z\|_2)^2\right)^{\frac{1}{2}}+\left(N_{K}(z',t')^2+\|z'\|_2)^2\right)^{\frac{1}{2}}\right)^2\nonumber\\
&=\left(N(z,t)+N(z',t')\right)^2\nonumber.
\end{align}

If $N((z,t)\ast (z',t')) = N((z,t))+ N((z',t'))$, then equality must hold everywhere in the above chain of estimates.
In particular, we have by \eqref{ineq_Lee1} that $N_K((z,t)\ast(z',t'))=N_{K}(z,t)+N_{K}(z',t')$, which according to Theorem \ref{t:KoranyiNorm} implies that $p=(z,t)$ and $p'=(z',t')$ lie on a horizontal line through the origin, if both are nonzero. This means that $N$ is horizontally strictly convex.
\end{proof}
In Section \ref{ss:NotionsOfStrictConvexity} we saw that the midpoint property implies geodesic linearity. Now we present an example which shows that the converse does not hold in general. This example belongs to a whole family of homogeneous norms, constructed using not only the Euclidean norm but the entire spectrum of $p$-norms on $\mathbb{R}^{2n}$. The properties of these norms depend on the value of $p$. In particular, for studying these properties we will often use the exact value of the best Lipschitz constant between different $p$-norms, presented in the following lemma.

\begin{lemma}\label{l:pNorms}
Let $1\leq p<q\leq\infty$ and $\|\cdot \|_r$ be the $r$-norm on $\mathbb{R}^{n}$, $r\in\{p,q\}$. Then, for all $x\in\mathbb{R}^{n}$, it holds
\begin{align}
\|x\|_{q}\leq\|x\|_{p}\leq\|x\|_{q}n^{\frac{1}{p}-\frac{1}{q}}.\nonumber
\end{align}
\end{lemma}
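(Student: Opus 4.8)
The plan is to establish the two inequalities separately, treating the cases $q<\infty$ and $q=\infty$ with minor care at the endpoint. Both bounds are classical facts about the monotonicity and comparability of $\ell_p$-norms, so the proof is short and self-contained.

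For the left inequality $\|x\|_q\leq\|x\|_p$, I would first dispose of the trivial case $x=0$ and then exploit homogeneity of both norms to reduce to the normalization $\|x\|_p=1$. Under this normalization every coordinate satisfies $|x_i|\leq 1$, and since $q>p$ we have $|x_i|^q\leq |x_i|^p$ for each $i$. Summing over $i$ gives $\sum_{i=1}^n |x_i|^q\leq \sum_{i=1}^n |x_i|^p=1$, hence $\|x\|_q\leq 1=\|x\|_p$. When $q=\infty$ the same normalization yields $\|x\|_\infty=\max_i |x_i|\leq 1=\|x\|_p$ directly.

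For the right inequality $\|x\|_p\leq \|x\|_q\, n^{1/p-1/q}$ with $q<\infty$, the key tool is H\"older's inequality applied to the exponents $q/p>1$ and its conjugate $(q/p)'=q/(q-p)$. Writing $\sum_{i=1}^n |x_i|^p=\sum_{i=1}^n |x_i|^p\cdot 1$ and applying H\"older gives
\begin{displaymath}
\sum_{i=1}^n |x_i|^p\leq \left(\sum_{i=1}^n |x_i|^q\right)^{p/q}\left(\sum_{i=1}^n 1\right)^{(q-p)/q}=\left(\sum_{i=1}^n |x_i|^q\right)^{p/q} n^{(q-p)/q}.
\end{displaymath}
Raising both sides to the power $1/p$ and using the identity $(q-p)/(pq)=1/p-1/q$ yields exactly $\|x\|_p\leq \|x\|_q\, n^{1/p-1/q}$. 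For the endpoint $q=\infty$, one argues more simply from $\sum_{i=1}^n |x_i|^p\leq n\max_i |x_i|^p=n\,\|x\|_\infty^p$, and taking $p$-th roots gives $\|x\|_p\leq n^{1/p}\|x\|_\infty=n^{1/p-1/\infty}\|x\|_\infty$.

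There is no substantial obstacle here; the only point requiring attention is the correct bookkeeping of the exponents in the H\"older step and the separate handling of the $q=\infty$ endpoint, where the conjugate exponent degenerates. Since the lemma is used later only through the explicit constant $n^{1/p-1/q}$, no analysis of the equality cases is needed.
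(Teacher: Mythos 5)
Your proof is correct and is exactly the argument the paper has in mind: the paper gives no details, remarking only that the lemma ``can be obtained from the H\"older inequality and elementary calculations'', and your H\"older step with exponents $q/p$ and $q/(q-p)$ together with the normalization argument for the monotonicity inequality is the standard way to fill that in. The exponent bookkeeping and the $q=\infty$ endpoint are both handled correctly.
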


The above lemma can be obtained from the H\"older inequality and elementary calculations.

\begin{ex}\label{t:Npa_norm}
Let $n\in\mathbb{N}$,  $p\in\left[1,\infty\right]$, and let $\|\cdot \|_{p}$ be the $p$-norm on $\mathbb{R}^{2n}$  and $a\in\left(0,\infty\right)$. Then the function
\begin{align}
N_{p,a}:\mathbb{H}^n\to\mathbb{R},\ (z,t)\mapsto \max\left\{ ||z||_{p},a\sqrt{|t|}\right\},\nonumber
\end{align}
defines a norm on $\mathbb{H}^n$, if and only if
\begin{align}
 &i)\ 1\le p\le 2\ \text{and}\ 0<a\le1,\nonumber\\
 &\text{or}\nonumber\\
 &ii)\ 2<p\le\infty\ \text{and}\ 0<a\le n^{1/p-1/2}.\nonumber
 \end{align}
 In both cases, $N_{p,a}$ is homogenous.
\end{ex}

Due to its simplicity, the norm $N_{2,1}$ has often been used in literature, see for instance \cite{S}. To the best of our knowledge, the norms $N_{p,a}$ for $p\neq 2$ have not been studied in detail before.

\begin{proof}
The only nontrivial assertion is the triangle inequality:
\begin{displaymath}
N_{p,a}((z,t)\ast(z',t'))\leq N_{p,a}(z,t)+N_{p,a}(z',t'),
\end{displaymath}
which is equivalent to
\begin{equation}\label{eq:equivalent to triangle}
\left\{\begin{array}{ll}\|z+z'\|_{p}&\leq N_{p,a}(z,t)+N_{p,a}(z',t')\\\text{and}& \\a\sqrt{|t+t'+2\langle z,J_nz'\rangle|}&\leq N_{p,a}(z,t)+N_{p,a}(z',t').\end{array}\right.
\end{equation}
From the triangle inequality for the $p$-norm $\|\cdot\|_{p}$ on $\mathbb{R}^{2n}$ and the definition of $N_{p,a}$, we see that the first condition in \eqref{eq:equivalent to triangle} is always fulfilled. Hence, $N_{p,a}$ defines a homogeneous norm if and only if the second condition in \eqref{eq:equivalent to triangle} is fulfilled for every $(z,t),(z',t')$ in $\mathbb{R}^{2n}\times\mathbb{R}$. First, assume that $1\leq p\leq2$: \newline\newline
If $0<a\leq1$, using Cauchy-Schwarz inequality and Lemma \ref{l:pNorms}, we get
\begin{align}
a^2|\langle z,J_nz'\rangle|\leq |\langle z,J_nz'\rangle|\leq\|z\|_2 \|J_nz'\|_2=\|z\|_2 \|z'\|_2\leq \|z\|_p \|z'\|_p.
\end{align}
This implies for all $(z,t),(z',t')\in\mathbb{H}^{n}$ that
\begin{align}
&\left(a\sqrt{|t+t'+2\langle z,J_nz'\rangle|}\right)^{2}\leq a^{2}|t|+a^2|t'|+2a^2 |\langle z,J_nz'\rangle|\nonumber\\
&\leq a^{2}|t|+a^2|t'|+2\|z\|_{p}\|z'\|_{p}\label{eq:estimate_for_triangle}\\
&\leq \max\left\{\|z\|_{p},a\sqrt{|t|}\right\}^{2}+\max\left\{\|z'\|_{p},a\sqrt{|t'|}\right\}^{2}+2\max\left\{\|z\|_{p},a\sqrt{|t|}\right\}\max\left\{ \|z'\|_{p},a\sqrt{|t'|}\right\}\nonumber\\
&=\left(N_{p,a}(z,t)+N_{p,a}(z',t')\right)^{2}.\nonumber
\end{align}
Hence \eqref{eq:equivalent to triangle} and the triangle inequality hold.
On the other hand, if $a>1$, choosing $z:=e_1$, $z':=-e_{n+1}$, $t:=1/a^2$, and $t':=1/a^2$, we have
\begin{align}
&a\sqrt{|t+t'+2\langle z,J_n z'\rangle|}=\sqrt{2+2a^2}>2=N_{p,a}(z,t)+N_{p,a}(z',t'),\nonumber
\end{align}
and thus \eqref{eq:equivalent to triangle} and the triangle inequality fail.

Now, assume $2<p\leq\infty$.
If $0<a\leq n^{\frac{1}{p}-\frac{1}{2}}$, using again Cauchy-Schwarz inequality and Lemma \ref{l:pNorms}, we get
\begin{align}\label{eq:OtherRangep}
a^2|\langle z,J_nz'\rangle|&\leq a^2\|z\|_2 \|z'\|_2\leq a^2n^{1/2-1/p}\|z\|_p n^{1/2-1/p}\|z'\|_p  \leq\|z\|_{p}\|z'\|_{p}.
\end{align}
By the computation as in \eqref{eq:estimate_for_triangle},  this implies the triangle inequality. Finally, if $n^{\frac{1}{p}-\frac{1}{2}}<a<\infty$, taking $z:=\sum_{j=1}^{n}e_{j}$, $z':=-\sum_{j=n+1}^{2n}e_{j}$, $t:=\frac{n^{\frac{2}{p}}}{a^{2}}$ and $t':=\frac{n^{\frac{2}{p}}}{a^{2}}$, we obtain
\begin{align}
&a\sqrt{|t+t'+2 \langle z,J_nz'\rangle|}=\sqrt{{2n^{\frac{2}{p}}}+2na^2}>2n^{\frac{1}{p}}=N_{p,a}(z,t)+N_{p,a}(z',t').\nonumber
\end{align}
\end{proof}

In the following we show that for a specific choice of parameters $p$ and $a$,
Example \ref{t:Npa_norm} proves that the geodesic linearity property is not equivalent to the midpoint property.

\begin{proposition}\label{p:N_pa}
  Let $p\in[1,\infty]$ and  $a>0$ be such that the function $N_{p,a}$ (defined as in Example \ref{t:Npa_norm}) is a norm on $\mathbb{H}^n$. Then, $N_{p,a}$ does not have the midpoint property.
    \end{proposition}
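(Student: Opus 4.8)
The plan is to disprove the midpoint property by producing an explicit counterexample, using the metric reformulation recorded earlier in this section, namely that the midpoint property is equivalent to the implication $d_N(p,p^{-1}) = 2 d_N(p,q) = 2 d_N(p^{-1},q) \Rightarrow q = e$. Thus it suffices to exhibit a single point $p \neq e$ and a single point $q \neq e$ for which the three distances stand in the ratio above. The guiding intuition is that $N_{p,a}$ is built as a maximum of a purely horizontal term $\|z\|_{p}$ and a purely vertical term $a\sqrt{|t|}$; along the vertical direction the value of the norm is therefore \emph{frozen} as long as the vertical contribution stays dominated by the horizontal one. This flatness is exactly what should destroy the uniqueness of metric midpoints.

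Concretely, I would take $p := (z,0)$ to be a nonzero horizontal element, for instance $z = e_1$ so that $\|z\|_{p} = 1$, and let $q := (0,t)$ be a small vertical element with $0 < t \leq \|z\|_{p}^{2}/a^{2}$. The verification then reduces to three short group computations. First, $p^{-1}\ast p^{-1} = (-2z,0)$ because $\omega_{n}(-z,-z) = 0$, whence $d_N(p,p^{-1}) = N((-2z,0)) = 2\|z\|_{p}$. Second, $p^{-1}\ast q = (-z,t)$ and $p \ast q = (z,t)$, both with horizontal part of $\|\cdot\|_{p}$-norm equal to $\|z\|_{p}$ and with vertical part satisfying $a\sqrt{t} \leq \|z\|_{p}$ by the choice of $t$; hence $N((\pm z, t)) = \max\{\|z\|_{p}, a\sqrt{t}\} = \|z\|_{p}$, giving $d_N(p,q) = d_N(p^{-1},q) = \|z\|_{p}$. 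Combining these, $d_N(p,p^{-1}) = 2\|z\|_{p} = 2 d_N(p,q) = 2 d_N(p^{-1},q)$, while $q = (0,t) \neq e$.

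This contradicts the metric reformulation of the midpoint property, so $N_{p,a}$ cannot have it. Equivalently, in the original formulation one reads off that $p_1 := p$ and $p_2 := p^{-1} = (-z,0)$ have Euclidean midpoint $\tfrac{p_1+p_2}{2} = e$, yet the point $q \neq e$ still satisfies the defining distance relations. I do not expect a genuine obstacle here: the only point requiring care is to keep the vertical perturbation within the regime where the $\|z\|_{p}$-term wins the maximum, which is precisely the constraint $t \leq \|z\|_{p}^{2}/a^{2}$ and is guaranteed by choosing $t$ small enough. It is worth noting that the argument is entirely insensitive to the value of $p$ and exploits only the maximum structure of $N_{p,a}$, which is consistent with the fact -- established elsewhere in this section -- that $N_{p,a}$ may nonetheless enjoy the strictly weaker geodesic linearity property for $p \in (1,\infty)$.
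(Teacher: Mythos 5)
Your proposal is correct and follows essentially the same route as the paper: the paper's proof also takes $\hat p=(e_1,0)$ and the vertical point $q=(0,1/a^2)$ (the boundary case $t=\|z\|_p^2/a^2$ of your admissible range) and verifies the same three distance identities to contradict the midpoint property. The computations check out, so no further comment is needed.
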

  \begin{proof}
  Choosing $\hat{p}:=(e_1,0)$, and $q=(0,1/a^2)$, it holds
  \begin{align}
  &d_{N_{p,a}}(\hat{p},\hat{p}^{-1})= \max\lbrace\|-2e_1\|_p,a\sqrt{|0|}\rbrace=2,\nonumber\\
  &d_{N_{p,a}}(\hat{p},q)=\max\lbrace\|-e_1\|_p,a\sqrt{|1/a^2|}\rbrace=1,\nonumber\\
  &d_{N_{p,a}}(\hat{p}^{-1},q)=\max\lbrace\|e_1\|_p,a\sqrt{|1/a^2|}\rbrace=1.\nonumber
   \end{align}
   This means that  $d_{N_{p,a}}(\hat{p},\hat{p}^{-1})=2d_{N_{p,a}}(\hat{p},q)=2d_{N_{p,a}}(\hat{p}^{-1},q)$, but $q\neq \frac{\hat{p}+\hat{p}^{-1}}{2}$.
   \end{proof}

   We now obtain examples of a geodesic linear norm without midpoint property.

      \begin{thm}\label{t:Npa_GLP}
Let $n\in\mathbb{N}$, $p\in[1,\infty]$ and $a>0$ such that the map $N_{p,a}$ (defined as in Example \ref{t:Npa_norm}) is a homogeneous norm on $\mathbb{H}^{n}$. Then, $N_{p,a}$ has the geodesic linearity property if and only if $p\in(1,\infty)$. Moreover, in this case every finite geodesic is a horizontal line segment.
\end{thm}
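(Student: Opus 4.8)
The plan is to treat the two implications separately. For the ``$p\in\{1,\infty\}\Rightarrow$ no GLP'' direction everything is already in place: the projected norm of $N_{p,a}$ in the sense of Proposition~\ref{p:ProjNormIsNorm} is $\|z\|=N_{p,a}((z,0))=\|z\|_p$, which fails to be strictly convex exactly when $p\in\{1,\infty\}$, so Proposition~\ref{p:strictConvexityNecessary} gives that $(\mathbb{H}^n,d_{N_{p,a}})$ does not have the geodesic linearity property. This already shows that GLP forces $p\in(1,\infty)$.

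For the forward direction I would argue directly with $d_{N_{p,a}}$, not via the sub-Finsler distance: $d_{N_{p,a}}$ is not a length metric, Corollary~\ref{c:plane} only covers $\mathbb{H}^1$, and the sharper claim about finite geodesics is genuinely false for $d_{SF}$ (whose finite geodesics include isoperimetric arcs). So fix $p\in(1,\infty)$ and let $\gamma\colon I\to(\mathbb{H}^n,d_{N_{p,a}})$ be any geodesic, with $I=[a,b]$ or $I=\mathbb{R}$. As an isometric embedding it is $1$-Lipschitz with respect to $d_{N_{p,a}}$, hence (as in the proof of Proposition~\ref{p:length_homogeneous}) a horizontal curve; moreover by Lemma~\ref{l:geodChar} and Proposition~\ref{p:length_homogeneous} we have $\int_s^{s'}\|\dot\gamma_I\|_p\,d\xi=s'-s$ for all $s<s'$ in $I$. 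Writing $(z,t)=\gamma(s)^{-1}\ast\gamma(s')$, one has $\|z\|_p=\|\gamma_I(s')-\gamma_I(s)\|_p$ and, integrating the horizontality relation \eqref{eq:horiz_comp}, the vertical part is $t=2\int_s^{s'}\omega_n(u,\dot u)\,d\xi$ with $u(\xi):=\gamma_I(\xi)-\gamma_I(s)$.

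The heart of the argument is the estimate $a\sqrt{|t|}\le s'-s$, strict whenever $s<s'$. Setting $L(\xi):=\int_s^\xi\|\dot\gamma_I\|_p\,d\eta$, I would chain together three facts: the pointwise inequality $a^2|\omega_n(u,\dot u)|\le\|u\|_p\|\dot u\|_p$ --- which is precisely the inequality $a^2|\langle w,J_nw'\rangle|\le\|w\|_p\|w'\|_p$ that makes $N_{p,a}$ a norm, see \eqref{eq:estimate_for_triangle} and \eqref{eq:OtherRangep} --- the bound $\|u(\xi)\|_p\le L(\xi)$, and the identity $\int_s^{s'}LL'\,d\xi=\tfrac12 L(s')^2$. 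Together these give $a^2|t|\le 2\int_s^{s'}\|u\|_p\|\dot u\|_p\,d\xi\le 2\int_s^{s'}LL'\,d\xi=(s'-s)^2$. For the strictness, equality in the second step would force $\|\gamma_I(\xi)-\gamma_I(s)\|_p=L(\xi)$ for every $\xi$, and by strict convexity of $\|\cdot\|_p$ this makes $\gamma_I$ a straight segment --- but then $t=0$, incompatible with $a^2|t|=(s'-s)^2>0$. Hence $a\sqrt{|t|}<s'-s$ for $s<s'$.

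With this in hand the conclusion is immediate: the geodesic identity $s'-s=d_{N_{p,a}}(\gamma(s),\gamma(s'))=\max\{\|\gamma_I(s')-\gamma_I(s)\|_p,\,a\sqrt{|t|}\}$ can be realized only by its horizontal term, so $\|\gamma_I(s')-\gamma_I(s)\|_p=s'-s=\int_s^{s'}\|\dot\gamma_I\|_p\,d\xi$; strict convexity of $\|\cdot\|_p$ then forces $\gamma_I$ to be a straight line segment joining $\gamma_I(s)$ and $\gamma_I(s')$. Since this holds for all $s<s'$ and $\gamma$ is horizontal, $\gamma$ is a horizontal line segment (a horizontal line when $I=\mathbb{R}$), which yields both the geodesic linearity property and the ``moreover'' assertion. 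I expect the isoperimetric estimate and especially its equality case to be the main obstacle; the point to highlight is that the norm condition on $(p,a)$ is exactly what keeps the vertical term strictly below the length, which is the geometric reason the $\max$-norm is better behaved than the associated sub-Finsler length distance.
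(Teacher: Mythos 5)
Your proof is correct, and for the main implication it takes a genuinely different route from the paper. The backward direction (no GLP for $p\in\{1,\infty\}$ via Proposition \ref{p:strictConvexityNecessary}) is identical to the paper's. For $p\in(1,\infty)$, the paper argues purely with equality cases of triangle inequalities at the level of the norm: it first shows by contradiction on subintervals that $\|z(s)\|_p\geq a\sqrt{|t(s)|}$ along a geodesic (Claim~1), then uses strict convexity of $\|\cdot\|_p$ to force all projections $z(s)$ to be collinear (Claim~2), and finally deduces $t\equiv 0$ from a $2$-H\"older estimate (Claim~3); for $n=1$ it alternatively invokes Corollary \ref{c:plane}. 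You instead run an isoperimetric-flavoured integral estimate: using Proposition \ref{p:length_homogeneous}, the horizontality of Lipschitz curves, and the pointwise bound $a^2|\omega_n(u,\dot u)|\leq\|u\|_p\|\dot u\|_p$ (which is exactly the condition making $N_{p,a}$ a norm), you show $a\sqrt{|t|}<s'-s$ strictly, so the $\max$ is always realized by the horizontal term, and strict convexity of $\|\cdot\|_p$ then straightens the projection. The chain is sound; the only step worth spelling out is that equality in $\int\|u\|_p L'\leq\int LL'$ forces $\|u\|_p=L$ pointwise because $L(\xi)=\xi-s$ (hence $L'\equiv 1$ a.e.) by the geodesic property --- without that, equality would only be forced where $L'>0$. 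Your version treats all $n$ and all finite geodesics uniformly and makes transparent why the admissible range of $(p,a)$ is precisely what keeps the vertical term strictly subdominant; the price is a heavier reliance on the length machinery of Section \ref{s:LengthOfCurves} (horizontality of Lipschitz curves, the length formula), whereas the paper's Claims 1--3 are more elementary and self-contained given the definition of $N_{p,a}$.
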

\begin{proof}
Since the norm on $\mathbb{R}^{2n}$ defined through $z\mapsto N_{p,a}(z,0)$ is nothing else than the $p$-norm, we remark that for $p\in\{1,\infty\}$, the norm $N_{p,a}$ cannot have the geodesic linearity property since according to Proposition \ref{p:strictConvexityNecessary} this would require the strict convexity of $\|\cdot \|_p$. For $n=1$, Corollary \ref{c:plane} allows us to conclude in converse direction that
$N_{p,a}$  has the geodesic linearity property for $p\in(1,\infty)$ since, in this case, $\|\cdot\|_p$ is a strictly convex norm on $\mathbb{R}^2$.

For $n>1$ we verify the geodesic linearity property by explicit estimations.
So let $p\in(1,\infty)$ and $\gamma:([0,1],|\cdot |)\to(\mathbb{H}^{n},d_{N_{p,a}})$ be a geodesic with $\gamma(0)=0$. We need to show that for an appropriate $z_0\in\mathbb{R}^{2n}$, $\gamma$ can be written as $\gamma(s)=(sz_0,0),\ s\in[0,1]$. We can write $\gamma(s)=(z(s),t(s))$, with continuous functions $z:[0,1]\to\mathbb{R}^{2n}$ and $t:[0,1]\to\mathbb{R}$, such that $z(0)=0$ and $t(0)=0$. The proof is a succession of steps formulated as claims.

\medskip\noindent{\textbf{Claim 1.}}
 $||z(s)||_p\geq a\sqrt{|t(s)|}$,  for all $s\in(0,1)$.

\begin{proof}
Assume by contradiction that  $\|z(s_0)\|_p< a\sqrt{|t(s_0)|}$ for some $s_0\in(0,1)$. By continuity, there exists an interval $[b,c]\subseteq(0,1)$ such that $s_0\in[b,c]$ and $\|z(s)\|_p<a\sqrt{|t(s)|}$, for all $s\in[b,c]$. Defining  $g_1:=\gamma(b)$ and $g_2:=(\gamma(b))^{-1}*\gamma(c)$, and denoting $g_i=(z_i,t_i)$ for $i=1,2$, we get
\begin{align}\label{eq:Npa_est1}
&a\sqrt{|t_1+t_2+2\omega(z_1,z_2)}=a\sqrt{|t(c)|}=N_{p,a}(\gamma(c))=c=b+|c-b|=N_{p,a}(g_1)+N_{p,a}(g_2).
\end{align}
From \eqref{eq:estimate_for_triangle} (formulated for $1<p\leq 2$, the other cases work analogously), we see that \eqref{eq:Npa_est1} implies

\begin{align}\label{eq:Npa_est2}
||z_1||_p ||z_2||_p=N_{p,a}(g_1) N_{p,a}(g_2).
 \end{align}
 Since $g_2\neq e$ (because $\gamma$ is injective), \eqref{eq:Npa_est2} implies $||z_1||_p= N_{p,a}(g_1)$, which by definition of $g_1$ means a contradiction.
 \end{proof}
We remark that all calculations made so far are also true for $p\in\{1,\infty\}$. By continuity, the assertion of Claim 1 can be extended to
\begin{align}\label{eq:ClaimConseq}
\|z(s)\|_p\geq a\sqrt{|t(s)|}, \ \forall s\in[0,1].
\end{align}
This shows in particular, together with the assumption $z(0)=0$ and the injectivity of $\gamma$, that $z(s)\neq0,$ for all $s\in(0,1]$. We can now show a stronger fact, where the assumption $p\in(1,\infty)$ starts to be essential.

\medskip\noindent{\textbf{Claim 2.}}
For all $s\in[0,1]$, there exists $C(s)\in\mathbb{R}$, such that $z(s)=C(s)z(1)$

 \begin{proof}
 Since $z(0)=0$, without loss of generality, we can assume that $s\in(0,1]$. Using $N_{p,a}(\gamma(s))=\|z(s)\|_p$, as established in \eqref{eq:ClaimConseq} for all $s\in [0,1]$, we have
 \begin{align*}
 1=s+|1-s|=N_{p,a}(\gamma(s))+N_{p,a}((\gamma(s))^{-1}*\gamma(1))&\geq\|z(s)\|_p+\|z(1)-z(s)\|_p\nonumber\\
 &\geq\|z(s)\|_p+(\|z(1)\|_p-\|z(s)\|_p)\\&=N_{p,a}({\gamma}(1))=1,\nonumber\\
  \end{align*}
  and hence
  \begin{equation}\label{eq:ConcludedConvexity}
  \|z(s)\|_p+\|z(1)-z(s)\|_p = \|z(1)\|_p.
  \end{equation}
 We know that $z(s)\neq0$. If $z(1)-z(s)=0$, then the assertion of Claim 2 is obviously true. Otherwise, if $z(1)-z(s)\neq0$, it follows from \eqref{eq:ConcludedConvexity} and the strict convexity of the norm $\|\cdot \|_p$ for $p\in(1,\infty)$, that there exists $\alpha(s)\in\mathbb{R}\backslash\lbrace0\rbrace$ such that
 \begin{align}\label{eq:alpha}
 \alpha(s) z(s)=z(1)-z(s).
 \end{align}
 Since we also know that $z(1)\neq0$, it follows from \eqref{eq:alpha} that $\alpha(s)\neq-1$ and $z(s)=(1/(1+\alpha(s)))z(1)$.
 \end{proof}
 Now, we focus our attention on the map $t:[0,1]\to\mathbb{R}$. In order to prove that $\hat{\gamma}$ is the segment of a horizontal line through the origin, we still need to show that this map is actually zero everywhere. The results obtained sofar allow us to  do this: \medskip

\noindent{\textbf{Claim 3.}}
 $t\equiv0.$
 \begin{proof}
 It suffices to show that $t$ is 2-H\"older, which implies that $t$ is constant on $[0,1]$.
  In order to prove the $2$-H\"older continuity of $t$, we first remark that the assertion of Claim 2 in particular implies $\omega_n (z(s_1),z(s_2))=0$, for all  $s_1,s_2\in[0,1]$. Taking this into account, we get
 \begin{align}
 |s_1-s_2|=N_{p,a}((\hat\gamma(s_1))^{-1}*\hat\gamma(s_2))\geq a\sqrt{|t(s_2)-t(s_1)|},\nonumber\\
  \end{align}
  which yields the claim.
  \end{proof}

 Summarizing, what we have got so far is that every geodesic $\gamma:[0,1]\to (\mathbb{H}^1,d_{N_{a,p}})$ for $p\in (1,\infty)$ can be written as $\gamma(s)=(C(s)z(1),0)$, with a vector $z(1)\in\mathbb{R}^{2n}\setminus \{0\}$, and a map $C:[0,1]\to\mathbb{R}$. In particular, this implies that the curve $\gamma_I:([0,1],|\cdot |)\to(\mathbb{R}^{2n},\|\cdot\|_p),\ s\mapsto C(s)z(1)$ is a geodesic through zero and a line segment in $\mathbb{R}^{2n}$. Since $C(1)=1$, it follows that $C$ is the identity map, and hence $\gamma(s)=(sz(1),0)$ for $s\in [0,1]$.

Upon left translation and reparameterization, we have thus shown that every geodesic segment in $(\mathbb{H}^1,d_{N_{a,p}})$ is linear and thus, in consequence, every infinite geodesic in this space is a horizontal line.
 %
%
%
%
%
\end{proof}

We conclude that, unlike for real vector spaces, the properties (horizontal) strict convexity, midpoint property, and {geodesic linearity} are not all equivalent in the Heisenberg group. Both horizontal strict convexity and midpoint property imply the geodesic linearity property, so that the assertion of Theorem \ref{t;main1} remains valid if we replace ``\ldots If every infinite geodesic in $(\mathbb{H}^{n},d')$ is a line\ldots'' by ``\ldots If $(\mathbb{H}^{n},d')$ is horizontally strictly convex\ldots'' or ``\ldots If $(\mathbb{H}^{n},d')$ has the midpoint property\ldots''.


\subsection{Nonlinear embeddings}\label{s:NonLinearEmbedd}

In this section we show through a few examples that for homogeneous distances $d_1$ on $\mathbb{G}\in\{\mathbb{R}^{m},\mathbb{H}^{m}:m\in\mathbb{N}\}$ and $d_2$ on $\mathbb{H}^{n}$, an isometric embedding $f:(\mathbb{G},d_1)\to(\mathbb{H}^{n},d_2)$ does not need to be a homogeneous homomorphism if $d_2$ does not have the geodesic linearity property (GLP). Actually, for the case $\mathbb{G}=\mathbb{R}$ and $d_1(x,y)=|x-y|$, the fact that $d_2$ does not have the GLP already implies, by definition, the existence of an isometric embedding from $\mathbb{G}$ to $\mathbb{H}^{n}$ which is not a homogeneous homomorphism: if $d_2$ does not have the GLP, there exists a geodesic $\gamma:(\mathbb{R},|\cdot|)\to(\mathbb{H}^{n},d_2)$ (hence, in particular an isometric embedding) which is not a horizontal line and clearly such an embedding cannot be a homogeneous homomorphism.
%

Among the examples presented in Section \ref{s:exNorms}, the only two cases not having the GLP are the norms $N_{1,a}$ and $N_{\infty,a}$, for any appropriate positive constant $a$. We justified this assertion arguing that in these two cases $\|\cdot\|_p$ is not strictly convex on $\mathbb{R}^{2m}$ and hence the norm $N_{p,a}$ itself cannot have the GLP either (see Proposition \ref{p:N_pa} and Proposition \ref{p:strictConvexityNecessary}). In fact,  the proof of this implication already provides a method to construct a non linear geodesic $\gamma=(\gamma_I, \gamma_{2n+1})$ with target $(\mathbb{H}^{n},d_{N_{p,a}})$ using a non-linear geodesic $\gamma_I:(\mathbb{R},|\cdot |)\to(\mathbb{R}^{2n},\|\cdot \|_p)$ (whose existence follows from Proposition \ref{p:strict_convex_normed}). In this section we present concrete examples for such geodesics which are not even piecewise linear. In addition we give one example for an isometric embedding $f:(\mathbb{H}^{m},d_1)\to(\mathbb{H}^{n},d_2)$ that is not a homogeneous homomorphism, for the spacial case $d_2=d_{N_{1,a}}$.

\begin{proposition}
The maps
 \begin{align}
       &\gamma:(\mathbb{R},|\cdot |)\to(\mathbb{H}^{n},d_{N_{1,a}}),\quad 0<a\leq 1\nonumber\\
       &\gamma(s):=\left(\tfrac{1}{a}\left(\tfrac{1}{2}(as+\sin(as))e_1+\tfrac{1}{2}(as-\sin(as))e_{n+1}\right),\tfrac{1}{a^2}\left(2\cos(as)+as\sin(as)\right)\right),\nonumber\\
       &\text{and} \nonumber \\
     &\gamma:(\mathbb{R},|\cdot |)\to(\mathbb{H}^{n},d_{N_{\infty,a}}),\quad 0<a\leq \frac{1}{\sqrt{n}}\nonumber\\ &\gamma(s):=\left(\tfrac{1}{a}\left(ase_1+\tfrac{\sin(as)}{2}e_{n+1}\right),\tfrac{1}{a^2}\left(-2\cos(as)-as\sin(as)\right)\right),\nonumber
       \end{align}
 are isometric embeddings which are not homogeneous homomorphisms.
\end{proposition}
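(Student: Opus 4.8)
The plan is to handle both curves in parallel, since the argument is structurally identical: in each case $\gamma$ is a horizontal lift of an explicit planar curve $\gamma_I$ in $\mathbb{R}^{2n}$ which is a geodesic for the projected norm $\|\cdot\|_p$ (with $p=1$ in the first case, $p=\infty$ in the second, noting that $N_{p,a}((z,0))=\|z\|_p$), and one transfers the geodesic property up to $(\mathbb{H}^n,d_{N_{p,a}})$ by exactly the sandwich estimate used in the proof of Proposition \ref{p:strictConvexityNecessary}. First I would verify that each $\gamma$ is horizontal by checking the defining identity \eqref{eq:horiz_comp}. Only the first and $(n+1)$-th horizontal coordinates are nonzero, so this reduces to the single identity $\dot{\gamma}_{2n+1}(s)=2\bigl(\dot{\gamma}_1(s)\gamma_{n+1}(s)-\dot{\gamma}_{n+1}(s)\gamma_1(s)\bigr)$; differentiating the explicit formulas and simplifying confirms it (for the $N_{1,a}$ curve both sides equal $\tfrac{1}{a}(as\cos(as)-\sin(as))$, and for the $N_{\infty,a}$ curve both equal $\tfrac{\sin(as)}{a}-s\cos(as)$). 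This guarantees that $\gamma$ is a horizontal, hence Lipschitz and rectifiable, curve in $(\mathbb{H}^n,d_{N_{p,a}})$.

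Next I would establish that $\gamma_I$ is an arc-length geodesic of $(\mathbb{R}^{2n},\|\cdot\|_p)$. Computing $\dot{\gamma}_I$ shows $\|\dot{\gamma}_I(s)\|_p=1$ for all $s$: in the $p=1$ case $\dot{\gamma}_I=\tfrac{1}{2}(1+\cos(as))e_1+\tfrac{1}{2}(1-\cos(as))e_{n+1}$ has two nonnegative entries whose sum telescopes to $1$, while in the $p=\infty$ case $\dot{\gamma}_I=e_1+\tfrac{1}{2}\cos(as)e_{n+1}$ has $\ell^\infty$-norm $\max(1,\tfrac{1}{2}|\cos(as)|)=1$. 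The crucial point is then that $\|\gamma_I(s')-\gamma_I(s)\|_p=|s'-s|$. Assuming $s<s'$, this follows from the elementary Lipschitz bound $|\sin(as')-\sin(as)|\le a(s'-s)$: for $p=1$ it makes \emph{both} coordinate differences of $\gamma_I(s')-\gamma_I(s)$ nonnegative, so their absolute values add up to $s'-s$; for $p=\infty$ it makes the $(n+1)$-th coordinate difference dominated by the first, which equals $s'-s$.

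With $\gamma_I$ an arc-length $\ell^p$-geodesic, I would close the argument as in Proposition \ref{p:strictConvexityNecessary}. By Lemma \ref{l:normHorizVert} and the definition of length together with Proposition \ref{p:length_homogeneous}, for all $s<s'$,
\[
\|\gamma_I(s')-\gamma_I(s)\|_p\le d_{N_{p,a}}(\gamma(s),\gamma(s'))\le L(\gamma|_{[s,s']})=\int_s^{s'}\|\dot{\gamma}_I(\xi)\|_p\,\mathrm{d}\xi=\|\gamma_I(s')-\gamma_I(s)\|_p.
\]
Hence all terms equal $|s'-s|$ and $\gamma$ is an infinite geodesic, i.e.\ an isometric embedding. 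Finally, $\gamma$ is not a homogeneous homomorphism: by Lemma \ref{l:homohHomo} any homogeneous homomorphism $\mathbb{R}\to\mathbb{H}^n$ has the form $s\mapsto(sz,0)$ and in particular fixes the origin, whereas $\gamma(0)$ has nonzero vertical coordinate ($\tfrac{2}{a^2}$, respectively $-\tfrac{2}{a^2}$) and $\gamma_I$ is manifestly nonlinear. I expect the only genuinely delicate points to be the bookkeeping in the horizontality differentiation and getting the signs right in the $p=1$ geodesic verification; everything past that is the routine sandwich from Proposition \ref{p:strictConvexityNecessary}.
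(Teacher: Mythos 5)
Your proposal is correct and follows essentially the same route as the paper: verify horizontality of $\gamma$ via \eqref{eq:horiz_comp}, show that the projection $\gamma_I$ is a $\|\cdot\|_p$-geodesic (the paper uses the identity $\tfrac12(|A+B|+|A-B|)=\max\{|A|,|B|\}$ where you use the sign analysis from $|\sin(as')-\sin(as)|\le a|s'-s|$, which amounts to the same thing), and then lift the geodesic property by the sandwich argument from Proposition \ref{p:strictConvexityNecessary}. The only difference is cosmetic: you write out the sandwich inequality explicitly where the paper simply cites that earlier proof.
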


\begin{proof}
We first discuss the embedding for $p=1$. Recall that $N_{1,a}((z,0))=\|z\|_1$ for all $z\in \mathbb{R}^{2n}$. Note that the curve $\gamma$ given above for the case $p=1$ is a horizontal lift of the curve $\gamma_I: \mathbb{R} \to \mathbb{R}^{2n}$, defined by
\begin{displaymath}
\gamma_I(s):= \tfrac{1}{a}\left(\tfrac{1}{2}(as+\sin(as))e_1+\tfrac{1}{2}(as-\sin(as))e_{n+1}\right).
\end{displaymath}
Indeed, one finds
\begin{displaymath}
\dot{\gamma}_3(s)= -\tfrac{1}{a}\sin(as) + s\cos(as) = 2\dot{\gamma}_1(s)\gamma_2(s)- 2 \dot{\gamma}_2(s)\gamma_1(s),\quad\text{for all }s\in\mathbb{R}.
\end{displaymath}
According to the proof of Proposition
\ref{p:strictConvexityNecessary}, in order to prove that $\gamma$ is an isometric embedding, it suffices to show that $\gamma_I$ is a geodesic with respect to $\|\cdot\|_1$.
To see this, let us fix $s_1,s_2\in\mathbb{R}$, $s_1\neq s_2$. Then
\begin{align*}
\|\gamma_I(s_2)-\gamma_I(s_1)\|_1&=\tfrac{1}{2a}(|as_2-as_1+(\sin(as_2)-\sin(as_1))|+|as_2-as_1-(\sin(as_2)-\sin(as_1))| \\
&= \tfrac{1}{a}  \max\{|as_2-as_1|,|\sin(as_2)-\sin(as_1)|\}\\
&= |s_2-s_1|,
\end{align*}
by the mean value theorem.

      \medskip

      \noindent In an analogous way, we compute for $p=\infty$ and the respective curve
      \begin{displaymath}
      \dot{\gamma}_3(s)= \tfrac{1}{a}\sin(as)-s\cos(as)= 2 \dot{\gamma}_1(s)\gamma_2(s)-2\dot{\gamma}_s(s)\gamma_1(s),\quad\text{for all }s\in\mathbb{R},
      \end{displaymath}
      and
      \begin{align*}
      \|\gamma_I(s_2)-\gamma_I(s_1)\|_{\infty}&= \max\{|s_2-s_1|,\tfrac{1}{2a}|\sin(as_2)-\sin(as_1)|\}=|s_2-s_1|,
      \end{align*}
      which shows that $\gamma$, which is a horizontal lift of $\gamma_I$, must be an isometric embedding into $(\mathbb{H}^n,d_{N_{\infty,a}})$.
%
 \end{proof}

  \begin{proposition}\label{p:nonlinearEmbed}
For $n\geq 2$ and $0<a\leq \frac{1}{\sqrt{n}}$,  the map
     \begin{align}
f:(\mathbb{H}^{1},d_{N_{\infty,a}})\to(\mathbb{H}^{n},d_{N_{\infty,a}}),\ (x,y
,t)\mapsto(xe_1+\sin(x)e_2+ye_{n+1},t).
\end{align}
is an isometric embedding which is not a homogeneous homomorphism.
\end{proposition}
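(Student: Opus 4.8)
The plan is to prove the isometry property by a direct computation of $d_{N_{\infty,a}}$ using the left-invariance formula $d_N(p,q)=N(p^{-1}\ast q)$ recorded in Section~\ref{ss:TheHeisenberggroup}, and then to deduce that $f$ is not a homogeneous homomorphism from the nonlinearity of the sine term via Lemma~\ref{l:homohHomo}. First I would fix $p_1=(x_1,y_1,t_1)$ and $p_2=(x_2,y_2,t_2)$ in $\mathbb{H}^1$ and record that the horizontal part of $p_1^{-1}\ast p_2$ is $(x_2-x_1,y_2-y_1)$ and its vertical part is $t_2-t_1+2(x_1y_2-y_1x_2)$. Writing $f(p_i)=(Z_i,t_i)$ with $Z_i:=x_ie_1+\sin(x_i)e_2+y_ie_{n+1}\in\mathbb{R}^{2n}$, the goal is to compare $f(p_1)^{-1}\ast f(p_2)$ with $p_1^{-1}\ast p_2$ component by component.

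The key step is to check that the symplectic term is preserved. Splitting $Z_i=(Z_i^x,Z_i^y)$ into its $x$- and $y$-parts in $\mathbb{R}^n\times\mathbb{R}^n$, one has $\omega_n(Z_1,Z_2)=-\langle Z_1^x,Z_2^y\rangle+\langle Z_1^y,Z_2^x\rangle$. Because the $\sin$-coordinate is placed in the $x$-direction $e_2$ while the $y$-coordinate sits in the $y$-direction $e_{n+1}$, the entries $\sin(x_1),\sin(x_2)$ always pair against a zero, so that $\langle Z_1^x,Z_2^y\rangle=x_1y_2$ and $\langle Z_1^y,Z_2^x\rangle=y_1x_2$. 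Hence $\omega_n(-Z_1,Z_2)=x_1y_2-y_1x_2$, which is exactly $\omega_1(-(x_1,y_1),(x_2,y_2))$. It follows that the vertical component of $f(p_1)^{-1}\ast f(p_2)$ coincides with that of $p_1^{-1}\ast p_2$.

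Next I would treat the horizontal part. Since $Z_2-Z_1=(x_2-x_1)e_1+(\sin x_2-\sin x_1)e_2+(y_2-y_1)e_{n+1}$, its $\infty$-norm is $\max\{|x_2-x_1|,|\sin x_2-\sin x_1|,|y_2-y_1|\}$; as $\sin$ is $1$-Lipschitz by the mean value theorem, the middle term is dominated by $|x_2-x_1|$ and therefore $\|Z_2-Z_1\|_\infty=\max\{|x_2-x_1|,|y_2-y_1|\}$, which equals the $\infty$-norm of the horizontal part of $p_1^{-1}\ast p_2$. Combining the two paragraphs with the definition $N_{\infty,a}(z,t)=\max\{\|z\|_\infty,a\sqrt{|t|}\}$ yields $d_{N_{\infty,a}}(f(p_1),f(p_2))=d_{N_{\infty,a}}(p_1,p_2)$, so $f$ is an isometric embedding. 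Here the hypothesis $0<a\le 1/\sqrt{n}$ is precisely what Example~\ref{t:Npa_norm} requires for $N_{\infty,a}$ to be a norm on both $\mathbb{H}^1$ and $\mathbb{H}^n$, and the assumption $n\ge 2$ guarantees that the direction $e_2$ is available and distinct from $e_1$.

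Finally, to see that $f$ is not a homogeneous homomorphism I would invoke Lemma~\ref{l:homohHomo}, by which every such map has the form $(z,t)\mapsto(Tz,at)$ with $T$ linear; the horizontal part of $f$ contains the nonlinear term $\sin(x)$, which already rules this out. Concretely, $f$ fails to commute with dilations, since the $e_2$-component of $f(\delta_\lambda(x,y,t))$ is $\sin(\lambda x)$ whereas that of $\delta_\lambda(f(x,y,t))$ is $\lambda\sin(x)$. I expect no genuine difficulty here: the only point demanding care is the verification that the $\sin$-terms drop out of the symplectic form, which is the structural reason the embedding exists, together with the $1$-Lipschitz estimate for $\sin$, which is exactly what makes the non–strictly-convex $\infty$-norm the right target geometry.
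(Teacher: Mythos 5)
Your proposal is correct and follows essentially the same route as the paper: a direct computation of $N_{\infty,a}\bigl(f(p_1)^{-1}\ast f(p_2)\bigr)$, observing that the $\sin$-entry pairs against zeros in the symplectic form and is absorbed in the $\ell_\infty$-maximum by the $1$-Lipschitz bound $|\sin x_2-\sin x_1|\le|x_2-x_1|$, with the non-homomorphism claim read off from Lemma \ref{l:homohHomo}. Your version merely spells out the symplectic-form bookkeeping and the dilation counterexample more explicitly than the paper does.
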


\begin{proof} Clearly, by Lemma \ref{l:homohHomo}, the map
$f$  is not be a homogeneous homomorphism. On the other hand, we can easily check that $f$ is actually an isometric embedding:
\begin{align}
&d_{N_{\infty,a}}(f(x_1,y_1
,t_1),f(x_2,y_2
,t_2))\nonumber\\
&=\max\{\max\{|x_2-x_1|,|\sin(x_2)-\sin(x_1)|,|y_2-y_1|\},a\sqrt{|t_2-t_1+2x_1y_2-2x_2y_1|}\}\nonumber\\
&=\max\{\max\{|x_2-x_1|,|y_2-y_1|\},a\sqrt{|t_2-t_1+2x_1y_2-2x_2y_1|}\}\nonumber\\
&=d_{N_{\infty,a}}((x_1,y_1,t_1),(x_2,y_2,t_2)),\nonumber
\end{align}
for any $(x_1,y_1,t_1)$ and $(x_2,y_2,t_2)$ in $\mathbb{H}^1$.
\end{proof}

\section{Final comments}\label{s:final}

The a priori information that an isometry or an isometric embedding, if it exists, has to be affine often allows to prove that one space cannot be isometrically embedded into another. As a corollary of Theorem \ref{t:main_Eucl_Heis}, we obtain the following result:

\begin{cor}\label{cor:nonexist}
Let $\mathbb{G}_1\in \{(\mathbb{R}^m,+),(\mathbb{H}^m,\ast)\}$ and $\mathbb{G}_2=(\mathbb{H}^n,\ast)$ for $m\leq n$ be equipped with homogeneous distances $d_1$ and $d_2$, respectively. If $(\mathbb{G}_1,d_1)$ does not have the geodesic linearity property, while $(\mathbb{G}_2,d_2)$ does have the geodesic linearity property, then there cannot exist an isometric embedding $f:(\mathbb{G}_1,d_1)\to (\mathbb{G}_2,d_2)$.
\end{cor}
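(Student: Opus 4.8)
The plan is to argue by contradiction, using Theorem \ref{t:main_Eucl_Heis} to pin down the Euclidean shape of any hypothetical isometric embedding and then transporting the linearity of its image geodesics backwards to the source. So suppose, contrary to the claim, that an isometric embedding $f:(\mathbb{G}_1,d_1)\to(\mathbb{G}_2,d_2)$ exists. Since $d_2$ has the geodesic linearity property, Theorem \ref{t:main_Eucl_Heis} applies and yields $f=L_p\circ A$ for some $p\in\mathbb{H}^n$ and some homogeneous homomorphism $A:\mathbb{G}_1\to\mathbb{G}_2$. The first point I would record is that $f$ is then \emph{affine} in the Euclidean coordinates: by Lemma \ref{l:homohHomo} the homomorphism $A$ has the form $z\mapsto(Tz,0)$ or $(z,t)\mapsto(Tz,at)$, hence is linear, while left translation $L_p(q)=p\ast q$ is affine in $q$ with invertible linear part (writing $p=(\zeta_0,\tau_0)$, this linear part is $(z,t)\mapsto(z,t+2\omega_n(\zeta_0,z))$, whose inverse is $(z,t)\mapsto(z,t-2\omega_n(\zeta_0,z))$). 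Consequently $f$ takes the form $x\mapsto Mx+b$ for a fixed matrix $M$ and vector $b$, and since $f$ is injective its linear part $M$ is injective as well; in particular $M$ admits a left inverse $M^{+}$ with $M^{+}M=\mathrm{Id}$.

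Next I would exploit the failure of the geodesic linearity property in the source. Since $(\mathbb{G}_1,d_1)$ does not have this property, there is an infinite geodesic $\gamma:\mathbb{R}\to\mathbb{G}_1$ which is not a horizontal line (and, when $\mathbb{G}_1=\mathbb{R}^m$, simply not a line). Because $f$ is an isometric embedding, $f\circ\gamma:\mathbb{R}\to\mathbb{G}_2$ is again an infinite geodesic, so the geodesic linearity property of $(\mathbb{G}_2,d_2)$ forces $f\circ\gamma$ to be a horizontal line, that is $f(\gamma(s))=f(\gamma(0))\ast(sw_0,0)$ for some $w_0\in\mathbb{R}^{2n}\setminus\{0\}$. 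A direct computation of this product shows that $s\mapsto f(\gamma(s))$ is an affine function of $s$, say $f(\gamma(s))=c_0+sc_1$ with $c_1\neq0$. Writing also $f(\gamma(s))=M\gamma(s)+b$ and applying $M^{+}$, I obtain $\gamma(s)=M^{+}(c_0-b)+sM^{+}c_1$, which is affine in $s$. Hence the image of $\gamma$ is a straight line in the ambient Euclidean space.

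It remains to turn this into a contradiction in both cases. If $\mathbb{G}_1=\mathbb{R}^m$, then $\gamma$ being a straight line directly contradicts its choice as a non-line infinite geodesic. If $\mathbb{G}_1=\mathbb{H}^m$, I would invoke Proposition \ref{p:horiz_line}: a geodesic is rectifiable, whereas any segment of a non-horizontal line fails to be rectifiable, so a geodesic whose image is a straight line must in fact be a horizontal line. Thus $\gamma$ is a horizontal line, again contradicting its choice. In either case the assumed embedding cannot exist, which is the assertion.

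The only genuinely delicate point is the affineness-plus-injectivity bookkeeping in the first paragraph: one must check that composing the homomorphism $A$ (including the case $a<0$) with a left translation really produces a Euclidean-affine map whose linear part is injective, so that the backward pull-back through $M^{+}$ is legitimate. Everything else is a direct combination of Theorem \ref{t:main_Eucl_Heis}, the two geodesic linearity hypotheses, and Proposition \ref{p:horiz_line}.
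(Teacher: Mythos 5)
Your argument is correct and follows essentially the same route as the paper's proof: assume an embedding exists, apply Theorem \ref{t:main_Eucl_Heis} to make it affine, push a non-linear infinite geodesic of the source forward to a horizontal line in the target, and pull the line back to contradict the choice of geodesic. The only difference is that you spell out explicitly (via the left inverse $M^{+}$ of the injective linear part) why $f^{-1}|_{f(\mathbb{G}_1)}$ carries lines to lines, a step the paper treats as clear.
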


\begin{proof}
Let us assume towards a contradiction that there exists an isometric embedding $f:(\mathbb{G}_1,d_1)\to (\mathbb{G}_2,d_2)$. By postcomposing with a left translation, we may without loss of generality suppose that $f(0)=0$. Since $(\mathbb{G}_2,d_2)$ has the geodesic linearity property,
Theorem \ref{t:main_Eucl_Heis} yields that $f$ is a homogeneous homomorphism. As $(\mathbb{G}_1,d_1)$ violates the geodesic linearity property, it must contain an infinite geodesic, say $\gamma$, which is not a line. The image $f\circ \gamma$ is a geodesic in $(\mathbb{G}_2,d_2)$ and thus a horizontal line. Yet clearly $f^{-1}|_{f(\mathbb{G}_1)}$ maps lines to lines, so $\gamma$ would have to be a line, which is a contradiction.
\end{proof}

As an application of Corollary \ref{cor:nonexist}, we see immediately by Theorem \ref{t:Npa_GLP} that $(\mathbb{H}^m,d_{N_{p,a}})$ for $p\in \{1,+\infty\}$ does not isometrically embed into $(\mathbb{H}^n,d_{N_{p',a'}})$ for $p'\in (1,+\infty)$. Here the parameters $a$ and $a'$ are chosen so that $N_{p,a}$ and $N_{p',a'}$ are homogeneous norms.

Concerning surjective isometries $f:(\mathbb{H}^n,d_{N_{p,a}})\to (\mathbb{H}^n,d_{N_{p',a'}})$, it follows already from the work of Kivioja and Le Donne that such $f$ must be affine, and in fact it must be a homogeneous homomorphism if we assume, as we may, that $f(0)=0$. The classification of different $\ell_p$ norms on $\mathbb{R}^{2n}$ then yields the isometric classification of the $N_{p,a}$-norms on $\mathbb{H}^n$. As the third author showed in \cite{Sob}, the spaces $(\mathbb{H}^n,d_{N_{p,a}})$ and $(\mathbb{H}^n,d_{N_{p',a'}})$  are isometric exactly in the following cases:
\begin{enumerate}
\item $n=1$, $p=1$, $a=\sqrt{2}b$, $p'=\infty$, $a'=b$, (for $b\in (0,1/\sqrt{2}]$),
\item $n=1$, $p=\infty$, $a=b$,  $p'=1$, $a'=\sqrt{2}b$, (for $b\in (0,1/\sqrt{2}]$),
\item $n\in\mathbb{N}$, $(p,a)=(p',a')$.
\end{enumerate}

As Corollary \ref{cor:nonexist} indicates, it is useful to know whether a homogeneous distance has the geodesic linearity property. In the first Heisenberg group, Corollary \ref{c:plane} reduces the problem  to verifying the strict convexity of a norm in $\mathbb{R}^2$. We conjecture that this works analogously in higher dimensional Heisenberg groups, that is, a homogeneous distance $d_N$ on $\mathbb{H}^n$ has the geodesic linearity property if and only if the norm $\|\cdot\|$ defined by $\|z\|:=N((z,0))$ on $\mathbb{R}^{2n}$ is strictly convex. This conjecture holds true for all the examples considered in this note, and in particular for the norms $N_{p,a}$ from Example \ref{t:Npa_norm}.

\bibliographystyle{amsplain}
\bibliography{references}

\end{document}